\newcommand{\cmark}{\color{green}\ding{51}}%
\newcommand{\xmark}{\color{red}\ding{55}}%
\newcommand{\myparagraph}[1]{\vspace*{0.5em}\par\noindent\textbf{{#1}.}} 
\newcommand{\N}{\mathbb{N}}
\newcommand{\R}{\mathbb{R}}
\newcommand{\Prob}{\mathbb{P}}
\newcommand{\Norm}{\mathcal{N}}
\newcommand{\F}{\mathcal{F}}
\newcommand{\E}{\mathbb{E}}
\newcommand{\0}{\textbf{0}}
\newcommand{\iid}{\stackrel{iid}{\sim}}
\renewcommand{\P}{\mathbb{P}}
\DeclareMathOperator{\argmax}{argmax}
\DeclareMathOperator{\argmin}{argmin}
\newcommand{\sa}[1]{{\color{black} #1}}
\newcommand{\na}[1]{{\color{black} #1}}
\newcommand{\mgrev}[1]{{\color{black} #1}}
\newtheorem*{rep@theorem}{\rep@title}
\newcommand{\newreptheorem}[2]{%
\newenvironment{rep#1}[1]{%
 \def\rep@title{#2 \ref{##1}}%
 \begin{rep@theorem}}%
 {\end{rep@theorem}}}
 \newcommand{\bq}{\bar{q}}
\newcommand{\camera}[1]{{\color{black} #1}}
\definecolor{asparagus}{rgb}{0.53, 0.66, 0.42}
\newcommand{\mgbis}[1]{{\color{black} #1}}
\newcommand{\mg}[1]{{\color{black} #1}}
\newcommand{\mgb}[1]{{\color{black} #1}} 
\newcommand{\ly}[1]{{\color{black} #1}}
\newcommand{\lybis}[1]{{\color{black} #1}}
\newcommand{\ys}[1]{{\color{black} #1}}
\newcommand{\smagda}{\texttt{sm-AGDA}{}}
\def\cO{\mathcal{O}}
\def\bq{\bar q}
\newtheorem{theorem}{Theorem}
\newtheorem{corollary}[theorem]{Corollary}
\newtheorem{lemma}[theorem]{Lemma}
\newtheorem{assumption}[theorem]{Assumption}
\newtheorem{remark}[theorem]{Remark}
\def\R{\mathbb{R}}
\newcommand{\thickhline}{%
    \noalign {\ifnum 0=`}\fi \hrule height 1pt
    \futurelet \reserved@a \@xhline
}
\newcolumntype{"}{@{\hskip\tabcolsep\vrule width 1pt\hskip\tabcolsep}}
\newenvironment{proofsketch}{%
  \proof}{\endproof}
\title{High-probability complexity guarantees for nonconvex minimax problems}
\author{%
  Yassine Laguel\thanks{Corresponding Author} \\
  Laboratoire Jean Alexandre Dieudonné\\
  Université Côte d'Azur\\
  Nice, France \\
  \texttt{yassine.laguel@univ-cotedazur.fr} \\
  \And
  Yasa Syed \\
  Department of Statistics \\
  Rutgers University \\
  Piscataway, New Jersey, USA \\
  \texttt{yasa.syed@rutgers.edu} \\
  \AND
  Necdet Serhat Aybat \\
  Department of Industrial Engineering \\
  Penn State University \\
  University Park, PA, USA \\
  \texttt{nsa10@psu.edu} \\
  \And
  Mert Gürbüzbalaban \\
  Rutgers Business School \\
  Rutgers University \\
  Piscataway, New Jersey, USA \\        
  \texttt{mg1366@rutgers.edu} \\
}
\begin{document}

\maketitle

\begin{abstract}
Stochastic smooth nonconvex minimax 
problems are prevalent in machine learning, e.g.,
GAN training, fair classification, and distributionally robust learning. Stochastic gradient descent ascent (GDA)-type methods are popular in practice due to their simplicity and single-loop nature. However, there is a significant gap between the theory and practice regarding high-probability complexity guarantees for these methods on stochastic nonconvex minimax problems. Existing high-probability bounds for GDA-type single-loop methods only apply to convex/concave minimax problems and to particular non-monotone variational inequality problems under some restrictive assumptions. In this work, we address this gap by providing the first high-probability complexity guarantees for nonconvex/PL minimax problems corresponding to a smooth function that satisfies the PL-condition in the dual variable. Specifically, we show that when the stochastic gradients are light-tailed, the smoothed alternating GDA method can compute an $\varepsilon$-stationary point within $\mathcal{O}(\frac{\ell \kappa^2 \delta^2}{\varepsilon^4} + \frac{\kappa}{\varepsilon^2}(\ell+\delta^2\log({1}/{\bq})))$ 
stochastic gradient calls with probability at least $1-\bq$ for any $\bq\in(0,1)$, where $\mu$ is the PL constant, $\ell$ is the Lipschitz constant of the gradient, $\kappa=\ell/\mu$ is the condition number, and $\delta^2$ 
denotes a bound on the variance of stochastic gradients. We also present numerical results on a nonconvex/PL problem with synthetic data and on distributionally robust optimization problems with real data, illustrating our theoretical findings. 
\looseness=-1
\end{abstract}

\section{Introduction}
Minimax optimization problems arise frequently in 
\sa{machine learning~\sa{(ML)} 
applications; indeed,} constrained optimization problems 
such as deep learning \mgb{with model constraints}~\cite{gurbuzbalaban2022stochastic}, dictionary learning~\cite{boumal2011rtrmc,sun-dict-learning} or matrix completion \cite{herbster2020online} can be recast as
a minimax optimization problem through Lagrangian duality. Other applications include but are not limited to \ly{the} training of GANs \cite{vlatakis2021solving}, fair learning \cite{zhang2022sapd+}, 
supervised learning 
\cite{xu2005maximum,palaniappan2016stochastic,rafique1810non,zhang2017stochastic}, adversarial 
deep learning \cite{zhu2023distributionally}, game theory \cite{razaviyayn2020nonconvex,v1928theorie}, robust optimization \cite{ben2009robust,beck2009duality}, distributionally robust 
learning~\cite{namkoong2016stochastic,zhu2023distributionally,gurbuzbalaban2022stochastic}, meta-learning \cite{zhang2020single} and multi-agent reinforcement learning \cite{daskalakis2020independent}.  
Many of these applications 
\sa{can be reformulated in the following minimax form:} \looseness=-1
\begin{equation}
\min_{x \in \mathbb{R}^{d_1}}\max_{y \in \mathbb{R}^{d_2}} f(x,y),
\label{original-obj}
\end{equation}
where 
\sa{$f:\mathbb{R}^{d_1}\times\mathbb{R}^{d_2}\to \mathbb{R}$} is a smooth function, 
\ly{i.e., differentiable with a Lipschitz gradient; $f$ can possibly be nonconvex in $x$ and nonconcave in $y$}.
First-order primal-dual (FOPD) methods 
have been the leading computational approach for computing low-to-medium-accuracy 
\sa{stationary points for these problems} because of their cheap iterations and mild dependence \sa{of their overall complexities} on the problem dimension and data size \cite{bottou2018optimization,bottou2010large,lan2020first}. 
In the context of FOPD methods, there are two key settings 
for \eqref{original-obj}: 
\ly{\begin{enumerate}[label=(\roman*), itemsep=0pt, parsep=0pt]
    \item the \emph{deterministic} setting, where the partial gradients \sa{$\nabla_x f$ and $\nabla_y f$} are exactly available,
    \item the \emph{stochastic} setting, where we have only access to (inexact) stochastic estimates of the partial gradients, in which case the problem in~\eqref{original-obj} is called a \emph{stochastic minimax problem}.
\end{enumerate}}
\vspace{-0.5em}
\mg{It can be argued that the} \ly{stochastic} setting is more relevant to modern machine learning applications 
where gradients are typically estimated randomly from mini-batches of data, \ly{or sometimes intentionally perturbed \mg{with random noise} to ensure \mg{data} privacy~\cite{chourasia2021differential,kuru-dp,altschuler2022privacy}}.\looseness=-1

%
\vspace{-0.1in}
\myparagraph{Convex and nonconvex minimax optimization}
%
\ly{In the convex case} (when $f$ is convex\footnote{$\hat f:\R^d\to\R\cup\{+\infty\}$ is called (merely) convex, if ${\displaystyle \hat f\left(tx_{1} (1-t)x_{2}\right)\leq t \hat f\left(x_{1}\right)+(1-t)\hat f\left(x_{2}\right)}$ \sa{for every 
    $x_1,x_2\in\R^d$ and $t\in[0,1]$ with the convention that $\alpha\leq +\infty$ for all $\alpha\in \R$.}} 
\sa{in} $x$ and concave \sa{in} $y$), several approaches have been considered including Variational Inequalities (VIs) and primal-dual algorithms, \mgb{see. e.g.} \cite{hsieh2019convergence,gorbunov2022clipped,beznosikov2023smooth,yoon2022accelerated,chambolle2011first,zhang2021robust,wang2020improved,can2019accelerated} \mgb{and the references therein}. One disadvantage of \sa{using the VI approach for solving minimax problems (by identifying the \emph{signed gradient map} $G(x,y) \sa{\triangleq [\nabla_x f(x,y)^\top, -\nabla_y f(x,y)^\top]^\top}$ as the corresponding operator in the VI) 
is that one needs to set the primal and dual stepsize to be the same}. This can be restrictive in applications \ly{where} $f$ \ly{exhibits} different smoothness properties in \sa{the primal ($x$) and dual ($y$) block coordinates} --this is often the case in distributionally robust learning \cite{zhang2021robust}, adversarial learning \cite{madry2017towards} and in the Lagrangian reformulations of constrained optimization problems that involve many constraints~\cite{nedich2023huber}. The gap function \sa{$\mathcal{G}(x_k,y_k)\triangleq \sup_{x,y} f(x_k,y)-f(x,y_k)$}, and the squared distance to the set of saddle points 
\mgb{$\mathcal{D} (x_k,y_k) \triangleq \min\{\|x_k - x^\star\|^2 + \|y_k - y^\star\|^2 ~|~ (x^\star, y^\star) \mbox{ is a saddle point}\}$} are standard metrics for assessing the quality of the output $z_k = (x_k,y_k)$ generated by an FOPD
algorithm after $k$ iterations among many others \cite{laguel2023high,zhang2021robust,gorbunov2022clipped}. 

\vspace{-0.05in}
In the nonconvex setting, i.e., when $f$ is nonconvex in $x$, the aim is to compute a stationary point. \sa{Let $\mathcal{M}({x}_k, {y}_k)$ denote a measure for the \textit{stationarity} of iterates $(x_k, y_k)$; a common metric is the norm of the gradient, i.e., $\mathcal{M}({x}_k, {y}_k)\triangleq \|\nabla f({x}_k,{y}_k)\|$ and its variants such as $\norm{\grad\Phi(x_k)}$ when $f$ is strongly concave in $y$, where $\Phi(\cdot)=\max_y f(\cdot,y)$ denotes the primal function --for other metrics and relation between them, see~\cite{xu2024stochastic}.} 
There are several algorithms that admit (gradient) complexity guarantees for computing a stationary point of nonconvex minimax problems under various strong concavity, concavity or weak concavity-type assumptions in the $y$ variable \sa{--see the references in~\cite{xu2024stochastic}.}

\camera{
In this paper, we consider smooth nonconvex-PL (NCPL) problems where \sa{$f$ is a smooth function such that it is possibly} nonconvex in $x$ and it satisfies the Polyak-Lojasiewicz (PL) condition in $y$. 
\sa{The PL} condition is a weaker assumption (milder condition) than strong concavity \sa{in $y$} \sa{--in fact,} PL condition in the dual does not even require \sa{quasi-concavity}. NCPL problems 
constitute a rich class of problems arising in many 
\sa{ML applications} including but not limited to fair classification \cite{nouiehed2019solving}, robust neural network training with dual regularization \cite[eqn. (14)]{nouiehed2019solving}, overparametrized systems and neural networks \cite{LIU202285}, linear quadratic regulators \cite{fazel2018global}, smoothed Lasso problems \cite{gurbuzbalaban2023cyclic} subject to constraints, distributionally robust learning with $\ell_2$ regularization in the dual \cite{zhang2022sapd+}, deep AUC maximization \cite{yuan2021large} and covariance matrix learning with Wasserstein GANs \cite{rajagopal2022multistage}. 
  For deterministic NCPL problems, the alternating gradient descent ascent (AGDA) method and its smoothed version (smoothed AGDA) have the complexity of $\mgrev{\mathcal{O}}(\kappa^2/\epsilon^2)$ and $\mgrev{\mathcal{O}}(\kappa/\epsilon^2)$, respectively, for finding a point $(\tilde x,\tilde y)$ satisfying $\|\nabla f(\tilde x,\tilde y)\|\leq \epsilon$ as shown in~\cite{zhang2020single,yang2022faster}. Here $\kappa \triangleq 
  \ell/\mu$ is the condition number, where $\ell$ is the Lipschitz constant of the gradient, and $\mu$ is the PL constant. For Catalyst-AGDA,~\cite{yang2022faster} shows also the rate $\mgrev{\cO}(\kappa/\epsilon^2)$ for deterministic NCPL problems.
}
%
\vspace{-0.1in}
\myparagraph{In expectation and high-probability bounds}
%
Most of the existing guarantees in the literature for \emph{stochastic} FOPD algorithms \sa{are provided} in expectation, i.e., a bound on the number of iterations $k$ (or the stochastic gradient evaluations) is provided for $\mathbb{E}[\mathcal{G} (x_k,y_k)] \leq \varepsilon$ or $\mathbb{E}[\mathcal{D} (x_k,y_k)] \leq \varepsilon$ to hold (see, e.g., \cite{zhang2021robust,zhang2022sapd+,hsieh2019convergence} and the references therein).
\camera{
Yet having such guarantees \textit{on average} does not allow to control tail events, i.e., even if $\mathbb{E}[\mathcal{G}({x}_k, {y}_k)]$ is small, $\mathcal{G}({x}_k, {y}_k)$ can still be arbitrarily large with a non-zero probability. 
To this end, high-probability guarantees have been considered in the literature~\cite{laguel2023high,yan-high-proba,gorbunov2022clipped,juditsky2011solving,gorbunov2023high}. These results allow to control the risk \sa{associated with the} worst-case tail events as they specify how many iterations 
would be sufficient to ensure $\mathcal{G}({x}_k, {y}_k)$ \sa{is sufficiently small} for any given \sa{failure} probability $\bq\in(0,1)$. To derive high-probability bounds, one common approach involves running the algorithm in parallel multiple times and strategically selecting an optimal output to convert in-expectation bounds into high-probability guarantees~\cite{xu2024stochastic,li2024general}. Alternatively, advanced concentration inequalities can be employed under light-tail assumptions to control noise accumulation across iterates without requiring multiple runs~\cite{rakhlin2011making,harvey2019tight}. 
For saddle point problems, we note that existing high-probability bounds mostly apply to the monotone VI setting, or to strongly convex/strongly concave (SCSC) minimax problems. To our knowledge, high-probability guarantees for nonconvex minimax problems are \mgbis{non-existent in} 
the literature,  
even for nonconvex/strongly concave (NCSC) problems \mgrev{with the exception of trivial loose bounds one can obtain by a standard application of Markov's inequality (see Remark \ref{remark-markov-ineq} for details)}.
In particular, we note that the existing VI literature with high-probability bounds on \sa{non-monotone} operators such as star-co-coercive operators \cite{gorbunov2022clipped,sadiev2023high}, do not apply to NCSC problems.\footnote{For example, when \sa{$\nabla f$ is smooth (Lipschitz and continuously differentiable)} and the signed gradient \sa{map} $G(x,y)$ is star-cocoercive \sa{with constant $\ell>0$} around a stationary point $(x_*, y_*)$, then by \cite[\sa{Lemma C.6}]{gorbunov2022extragradient}, the operator $\mbox{Id}-\sa{\frac{2}{\ell}} G(x,y)$ is non-expansive around $(x_*, y_*)$; \sa{thus,} the Jacobian of $G$ at $(x_*,y_*)$ has non-negative eigenvalues implying $f$ is merely convex/merely concave around $(x_*,y_*)$, \sa{i.e., $f$ cannot be NCSC}.}
}

\vspace{-0.1in}
\myparagraph{New high-probability bounds for NCPL optimization}
%
To address these shortcomings, 
\ly{we} \mgb{focus on developing} 
high-probability guarantees for NCPL problems.
Among the existing algorithms in the stochastic NCPL setting \cite{yan-high-proba}, stochastic gradient descent ascent \sa{(SGDA)} methods and their variants are quite popular 
\sa{for ML} applications, 
\sa{e.g., training} GANs and adversarial learning, \sa{as SGDA is easy to implement due to its single-loop structure.} 
Guarantees in expectation for stochastic NCSC problems are well supported by the literature - see~\cite{zhang2022sapd+,line-search-2024,lin2020gradient,boct2020alternating,huang2021efficient,huang2022accelerated,mancino2023variance,yang2022faster,junchinest,li2022tiada} \mg{and the references therein}.
To our knowledge, \mg{among single-loop methods for 
NCPL problems, the best guarantees in expectation 
are given by} the \emph{smoothed} alternating gradient descent ascent (\smagda) method~\cite{yang2022faster}, which can compute an almost stationary point \sa{$(\tilde x,\tilde y)$} satisfying $\mathbb{E}[\|\nabla f(\tilde x,\tilde y)]\|\leq \varepsilon$ in $\mathcal{O}( \ell \kappa^2\delta^2 / \varepsilon^4 + \ell \kappa/\varepsilon^2)$ \sa{stochastic gradient calls,} 
\camera{where 
$\delta^2$ is an upper bound on the variance of the stochastic gradients.}
In this work, we consider the \smagda{} algorithm, \sa{and to our knowledge we provide} the first-time high-probability bounds \sa{(using a single-loop method that does not resort to restarts and parallel runs)} for the \sa{minimax} problem \eqref{original-obj} in the NCSC and NCPL settings. \sa{More precisely, we focus on a purely stochastic regime in which data streams over time \sa{which renders} the use of mini-batch schemes or running the method in parallel impractical; therefore, 
approaches based on Markov's inequality \cite{xu2024stochastic} are no longer applicable \mgrev{(see also Remark \ref{remark-markov-ineq}).}} 
%
%
\vspace{-0.05in}
\myparagraph{Contributions} 
%
Our contributions are threefold:
\vspace{-0.08in}
\begin{itemize}[leftmargin=*,itemsep=0pt]
    \item We present \mgb{the first} 
    \textit{high-probability} complexity result for the \smagda{} algorithm \mgb{in} 
    the NCPL setting by building upon a Lyapunov function first introduced in~\cite{zhang2020single} for nonconvex-concave problems. Later, \sa{for the same Lyapunov function}, state-of-the-art complexity bounds in \textit{expectation} are provided for the NCPL setting in \cite{yang2022faster}. \sa{In this paper, we derive a novel descent property for this Lyapunov function in the almost sure sense \sa{(\cref{thm-ineq-lyapunov-descent,coro-Lyap-descent})}, allowing us to develop useful concentration arguments for it to derive high-probability bounds.}
    Our Lyapunov analysis not only sheds light on the convergence properties of~\smagda{}, but also 
    \sa{guides the} parameter selection \sa{for \smagda{}.}
    Specifically, we show that~\smagda{} can compute an almost stationary point $(\tilde x,\tilde y)$ satisfying $\|\nabla f(\tilde x,\tilde y)\|\leq \varepsilon$ with probability $1-\bq\in (0,1)$ within $
    T_{\varepsilon, \bq} 
    = \mathcal{O}\left(\frac{\ell \kappa^2 \delta^2}{\varepsilon^4} + \frac{\kappa}{\varepsilon^2}\left(\ell+\delta^2\log(1/\bq)\right)\right)
    $
    stochastic gradient calls. \mg{The 
    \sa{lower complexity bound of} $\Omega(\frac{1}{\varepsilon^2} + \frac{1}{\varepsilon^4})$ for NCSC problems \cite{li2021complexity,zhang2021complexity} in expectation \mgbis{(see also \cite{line-search-2024,zhang2022sapd+})} 
 suggests that our high-probability bound for \smagda{} is tight in terms of its \sa{dependence on} $\varepsilon$. \mgb{Furthermore, to our knowledge, these are the first high-probability guarantees for any algorithm in the NCPL setting.}}\looseness=-1
    \item \ly{Under light-tail (sub-Gaussian) assumption on the gradient noise (Assumption \ref{assump-light-tail}), 
    \sa{which is common} in the literature~\cite{laguel2023high,juditsky2011solving,ghadimi2013stochastic}, we develop a new concentration result (\cref{theorem-meta}) that can be of independent interest. From this concentration inequality, we observe that the cost of \sa{strengthening the existing complexity result in expectation to a
    high-probability 
    one is relatively low, i.e., 
    in the final complexity, the probability parameter $\bq$ \textit{only} appears in an additive term that scales with $\varepsilon^{-2}$.} Consequently, this represents a non-dominant overhead compared to the $\varepsilon^{-4}$ term already present in state-of-the-art expectation bounds~\cite{yang2022faster}.} 
\item Third, we provide 
experiments that illustrate our theoretical results. \mg{We first provide an example of an NCPL-game with synthetic data and then focus on distributionally \ys{robust} optimization problems with real data, \mgbis{illustrating the performance of the \smagda{} in terms of high-probability guarantees.}\looseness=-1 
}
  \end{itemize}

\renewcommand{\arraystretch}{1.2}
    \begin{table}[t]
        \centering
        \resizebox{\textwidth}{!}{
        \begin{tabular}{|c|c|c|c|c|c|c|}
        \hline
        \scriptsize{\textbf{Algorithm}} & 
        \scriptsize{\textbf{Complexity}} &  \scriptsize{\textbf{Problem }} 
         & \scriptsize{\textbf{Metric}}
         & \scriptsize{\textbf{NC}?} 
         \\ \thickhline
        \scriptsize{Epoch-GDA \cite{yan-high-proba}}$^\dagger$   
        & \scriptsize{$\cO\left(\frac{\delta^2}{\mu_s \varepsilon} \log(1/\bq)\right)$}                                                                                                              & \scriptsize{SCSC}  
        & \scriptsize{{$\mathcal{G}(\bar{z}_k)$}}  & \scriptsize{\xmark}   \\ \hline
        \scriptsize{Clipped-SGDA \cite{gorbunov2022clipped}$^{\ddagger}$}  
        & \scriptsize{$\tilde{\cO}\left(\max \left\{\frac{\ell}{\mu_{s}}, \frac{\delta^2}{\mu_s \varepsilon}\right\} \log \left(\frac{1}{\varepsilon}\right) \log \left({\kappa}/{\bq} \right)\right)$} 
         & \scriptsize{SCSC}
         & \scriptsize{{$\mathcal{D}(z_k)$}} & \scriptsize{\xmark }\\  \hline
         \scriptsize{Clipped-SEG \cite{gorbunov2022clipped}$^\flat$}  
        & \scriptsize{$\tilde{\cO}\left(\max \left\{\frac{\ell}{\mu_{s}}, \frac{\delta^2}{\mu_s \varepsilon}\right\} \log \left(\frac{1}{\varepsilon}\right) \log \left({\kappa}/{\bq} \right)\right)$} 
         & \scriptsize{SCSC}
         & \scriptsize{{$\mathcal{D}(z_k)$}} & \scriptsize{\xmark } \\  \hline
        \scriptsize{Stochastic APD \cite{laguel2023high}$^{\rhd \ast}$} 
        & \scriptsize{$\cO\Big(\frac{\ell\log \left(\frac{1}{\varepsilon}\right)}{\mu_s} +\frac{\left(1+\log \left({1}/{\bq}\right)\right) \delta^2 \log (1 / \varepsilon)}{\mu_s \varepsilon}\Big)$} 
         & \scriptsize{SCSC} 
         & \scriptsize{{$\mathcal{D}(z_k)$}} &\scriptsize{\xmark } \\
        \thickhline
         \scriptsize{Mirror-Prox \cite{juditsky2011solving}$^\ast$} 
          & \scriptsize{$\mathcal{O}\left(\max \left( \frac{\ell D^2}{\varepsilon^2}, \frac{\sigma^2 D^2}{\varepsilon^2} \right)\log \left({1}/{\bq} \right)\right)$}  
         & \scriptsize{MCMC} 
         & \scriptsize{{$\mathcal{G}
         (\bar{z}_k)$}} &\scriptsize{\xmark }   \\ \hline
         
        \scriptsize{Clipped-SGDA \cite{gorbunov2022clipped}$^\sharp$} 
        & \scriptsize{$\tilde{\mathcal{O}} \left(\max\{ \frac{1}{\varepsilon},\frac{\delta^2R^2}{\varepsilon^2}\} \log \left({1}/{\bq} \right)\right) $}  
         & \scriptsize{MCMC} 
         & \scriptsize{{$\mathcal{G}_R(\bar{z}_k)$}} &\scriptsize{\xmark } \\  \hline
        

          \scriptsize{Clipped-SEG \cite{gorbunov2022clipped}} 
          & \scriptsize{$\tilde{\mathcal{O}}\left(\max \left( \frac{\ell R^2}{\varepsilon}, \frac{\sigma^2 R^2}{\varepsilon^2} \right)\log \left({1}/{\bq} \right)\right)$}  
         & \scriptsize{MCMC} 
         & \scriptsize{{$\mathcal{G}_R(\bar{z}_k)$}} &\scriptsize{\xmark } \\ 
        \thickhline
         \scriptsize{\textbf{\smagda{} [Our Paper, Coro. \ref{coro-iter-complexity-high-proba})]$^\ast$}}
         & \scriptsize{$\mathcal{O}\Big(\frac{\ell \kappa^2 \delta^2}{\varepsilon^4} + \frac{\kappa}{\varepsilon^2}\Big(\ell+\delta^2\log({1}/{\bq})\Big)\Big)$}
         & \scriptsize{NCPL} 
         & \scriptsize{$\frac{1}{k+1}\sum_{j=0}^{k} \|\nabla f(z_j)\|^2$} & {\cmark } \\ \hline
        \end{tabular}}
        \vspace*{3mm}
\caption{{Summary of the 
high-probability bounds for minimax problem classes when the gradient of $f$ is Lipschitz (with parameter $\ell$) and stochastic gradient variance is bounded by $\delta^2$. The second column reports the complexity (number of calls to stochastic gradient oracle) required to achieve the (stationarity) metric reported in the fourth column to be at most $\varepsilon$ with probability $1-\bq\in (0,1)$; $\tilde{O}(\cdot)$ ignores some logarithmic terms. Here, $\mu_s$ is the strong convexity constant, $\mu$ is the PL constant, \sa{and} $\kappa\triangleq\ell/\mu$. Let $G(z) \triangleq [\sa{\nabla_x f(z)^\top}, \sa{-\nabla_y f(z)^\top}]^\top$ with $z = (x,y)$ and $\bar{z}_k=\frac{1}{K+1}\sum_{j=0}^K z_j$, $\mathcal{G}
(z)\triangleq \max_{\{\tilde z \in Z\}} \langle G(z), \tilde z - z_* \rangle$, where $Z$ is the domain of the problem with diameter 
\sa{$D\in(0,+\infty]$}, and $\mathcal{G}_R(z)\triangleq \max_{\{\tilde z\in \sa{Z} : \|z-z_*\|\leq R\}} \langle G(z), \tilde z - z_* \rangle $  where $z_*=(x_*^\top, y_*^\top)^\top$ is a stationary point. The third column reports the minimax problem class. The fifth column indicates whether the results supports nonconvexity, i.e., whether \sa{$f$ can be a smooth function nonconvex in $x$.}
$^\dagger$ \cite{yan-high-proba} is a two-loop method. $^\ddagger$ Applicable to quasi-strongly monotone $G$ that is star-co-coercive around $z_*$ and supports heavy-tailed gradients. $^\flat$ Applicable to quasi-strongly monotone $G$ and supports heavy-tailed gradients. $^\rhd$ Supports proximal steps to handle non-smooth convex penalty. $^\sharp$ Applies to monotone $G$ that is star-co-coercive around $z_*$. $^\ast$ Makes a light-tail assumption (Ass. \ref{assump-light-tail}). 
\looseness=-1 
}
\vspace{-0.3in}
}
\label{tab:algorithm_performance}
\end{table}
\vspace{-0.15in}
\section{Preliminaries and Technical Background}
\vspace{-0.1in}

\mg{\textbf{Stationarity metric}.} \sa{We consider the 
minimax problem in \eqref{original-obj} for $f:\R^{d_1}\times\R^{d_2}\to\R$ such that $f$ is smooth (\cref{assump-Lip-gradient}) and $f(x,\cdot)$ satisfies the PL property for all $x\in R^{d_1}$ (\cref{assump-PL-cond}); moreover, we also assume that we only have access to unbiased stochastic estimates of $\nabla f$ such that the stochastic error $G(x,y,\xi)-\nabla f(x,y)$ has a light tail (\cref{assump-light-tail}) for any $(x,y)$, where $G(x,y,\xi)$ denote the stochastic estimate of $\nabla f(x,y)$ and $\xi$ denotes the randomness in the estimator.}

\ly{
Our aim is to \sa{compute} a $(\varepsilon_x, \varepsilon_y)$-stationary point \sa{$(\tilde x, \tilde y)$} for~\eqref{original-obj} such that 
    $\|\nabla_x f(\tilde x, \tilde y)\| \leq \varepsilon_x$ and $\|\nabla_y f(\tilde x, \tilde y)\| \leq \varepsilon_y$.
}
\sa{We also call $(\tilde x,\tilde y)$ an $\varepsilon$-stationary point 
if $\|\nabla f(\tilde x, \tilde y) \|\leq \varepsilon$. Clearly, whenever $(\tilde x,\tilde y)$ is $(\varepsilon_x, \varepsilon_y)$-stationary, then it is also $\varepsilon$-stationary for $\varepsilon = (\varepsilon_x^2 + \varepsilon_y^2)^{1/2}$.}\looseness=-1

\textbf{Smoothed alternating gradient descent ascent (\smagda):} \sa{The method 
can be considered as an \textit{inexact} proximal point method} and was introduced in \cite{zhang2020single}. 
More specifically, in each iteration of \smagda{}, given a proximal center $z_t$ and the current iterate $(x_t,y_t)$, the method computes the next iterate $(x_{t+1},y_{t+1})$ using a stochastic gradient descent ascent step 
on a regularized function $\hat f$:\looseness=-1

\begin{equation}
    \sa{\hat{f}(x,y;z_t) \triangleq f(x,y) + \frac{p}{2} \|x - z_t\|^2.}
    \label{auxiliary-obj}
\end{equation}
\sa{Following} the stochastic alternating gradient descent ascent (stochastic AGDA) steps, \sa{the \textit{proximal center} at iteration $t$, i.e., $z_t$, is updated as shown in Algorithm \ref{alg-smagda},} where 
 \sa{$G_x(x_t, y_t, \xi_{t+1}^x)$ and $G_y (x_{t+1}, y_t, \xi_{t+1}^y)$ denote conditionally unbiased stochastic estimators of the gradients $\nabla_x f(x_t,y_t)$ and $\nabla_y f(x_{t+1},y_t)$.} 
\sa{Throughout the analysis we assume that $\nabla f$ is Lipschitz,} which is standard in the study of first-order optimization algorithms for smooth minimax problems; see, e.g., \cite{zhang2022sapd+,zhang2021robust,zhu2023distributionally,line-search-2024}. 
\begin{assumption}\label{assump-Lip-gradient} (Lipschitz gradient) For all $(x_1,y_1), (x_2,y_2) \in \R^{d_1} \times \R^{d_2}$, there exists $\ell>0$ 
        \begin{align}
            \|\nabla_xf(x_1,y_1) - \nabla_xf(x_2,y_2)\| 
            &\leq 
            \ell(\|x_1 - x_2\| + \|y_1 - y_2\|) \\
            \|\nabla_yf(x_1,y_1) - \nabla_yf(x_2,y_2)\| 
            &\leq 
            \ell(\|x_1 - x_2\| + \|y_1 - y_2\|).
        \label{lip-smoothness}
        \end{align}
\end{assumption}
\vspace{-0.1in}
The following condition, known as Polyak-\L ojaciewicz (PL) condition is 
\sa{weaker than assuming strong concavity in $y$}, and does not even necessitate $f$ to be \sa{even quasi-concave} in the $y$ variable. It holds in many ML applications including those in \cite{nouiehed2019solving,nouiehed2019solving, LIU202285,fazel2018global,gurbuzbalaban2023cyclic,zhang2022sapd+,yuan2021large,rajagopal2022multistage,yang2022faster}.
\begin{assumption}\label{assump-PL-cond} (PL condition in $y$) For every 
    $x \in \R^{d_1}$, $\max_{y \in \R^{d_2}} f(x,y)$ has a non-empty solution set and a finite optimal value. Moreover, there exists $\mu > 0$ such that:
        \begin{equation}
            \|\nabla_y f(x,y)\|^2 \geq 2\mu[\max_{y\in\R^{d_2}} f(x,y) - f(x,y)], \quad \forall~\sa{x\in\R^{d_1}}.
        \label{pl-condition}
        \end{equation}
\end{assumption}
\vspace{-0.1in}
\begin{minipage}{0.4\textwidth}
We assume that we have only access to stochastic estimates $G_x(x_t, y_t, \sa{\xi_{t+1}^x})$ and $G_y(x_{t+1}, y_{t}, \sa{\xi^y_{t+1}})$ of the partial gradients $\nabla_y f(x_k, y_k)$ and $\nabla_x f(x_{t+1}, y_{t})$, where \sa{$\xi_{t+1}^x$} and \sa{$\xi_{t+1}^y$} are random variables \mg{defined on a probability space $(\Omega, \mathbb{P})$}, \sa{i.e.,} 
the source of randomness in the gradient estimates.  Note that
\smagda~has Gauss-Seidel updates, i.e., the stochastic estimate of the partial gra-
\end{minipage}
\quad
  \begin{minipage}{0.58\textwidth}
    \vspace{-0.2in}
      \begin{algorithm}[H]
       \caption{\smagda}
        \begin{algorithmic}
          \State \textbf{Input:} $(x_0, y_0, z_0)$, 
                $\tau_1,\tau_2 > 0$, 
                $\beta \in [0,1]$, $p\geq 0$
        \For{$t = 0, 1, 2, \dots, T-1$}
            \State $x_{t+1} = x_t - \tau_1[G_x(x_t, y_t, \xi_{t+1}^{x}) + p(x_t - z_t)]$
            \State $y_{t+1} = y_t + \tau_2 G_y(x_{t+1}, y_t, \xi_{t+1}^{y})$
            \State $z_{t+1} = z_t + \beta(x_{t+1} - z_t)$
        \EndFor
        \State \textbf{Output:} choose \sa{$(\tilde {x}, \tilde {y})$} uniformly from $\{(x_t, y_t)\}_{t=0}^{T-1}$
        \end{algorithmic}
        \label{alg-smagda}
      \end{algorithm}
   \end{minipage}
dient $G_y(x_{t+1}, y_{t}, \sa{\xi_{t+1}^y})$ is evaluated at the updated point $(x_{t+1}, y_{t})$ 
\sa{instead of $(x_t, y_t)$}. To capture the sequential information flow, we next introduce the natural filtrations that represent all the information available before an update: Let $\xi_t^x$ and $\xi_t^y$ be
revealed sequentially in the natural order 
\sa{of} the \smagda~updates, i.e., 
  $  \xi_1^x \rightarrow \xi_1^y \rightarrow \xi_2^x \rightarrow \xi_2^y \rightarrow \xi_3^x \rightarrow \cdots \;$,
and 
let $(\mathcal{F}_{t}^{x})_{t\geq 1}$ and $(\mathcal{F}_{t}^{y})_{t\geq 1}$ denote the associated filtration\footnote{\sa{Given a random variable $\xi$, $\sigma(\xi)$ denotes the $\sigma$-algebra generated by $\xi$; moreover, given two $\sigma$-algebras, $\Sigma_1$ and $\Sigma_2$, abusing the notation, $\sigma(\Sigma_1,\Sigma_2)$ denotes the $\sigma$-algebra generated by $\Sigma_1\cup\Sigma_2$.}}, i.e., \sa{let $\mathcal{F}_{0}^y\triangleq\{\emptyset,\mg{\Omega}\}$, and}
\begin{equation}
\begin{split}
    \mathcal{F}_{t+1}^x = \sigma(\mathcal{F}_{t}^y, \sigma(\xi_{t+1}^x)), 
        \quad\mathcal{F}_{t+1}^y = \sigma(\mathcal{F}_{t+1}^x, \sigma(\xi_{t+1}^y)),\quad \forall~t\geq \sa{0}. 
\end{split}    \label{def-Fty-filtration}
\end{equation}

\sa{Introducing} multiple filtrations to represent the sequential information flow is common in the study of stochastic algorithms \sa{with Gauss-Seidel updates} --see, e.g., papers on stochastic ADMM, and \cite{boct2021minibatch,zeng2023accelerated}; and we follow the same approach. Consider the gradient noise (errors) at time \sa{$t\in\N$}: 
$$\Delta_t^x\triangleq G_x(x_t,y_t,\xi_{t+1}^x) - \nabla_x f(x_t,y_t), \quad \Delta_t^y \triangleq G_y(x_{t+1},y_t,\xi_{t+1}^y) - \nabla_y f(x_{t+1},y_t).$$
%
\sa{Finally, we also assume} that the gradient noise is unbiased conditionally on the past information and that it admits a light (sub-Gaussian) tail. 
\begin{assumption}(Light tail)\label{assump-light-tail} 
For any $t \geq 0$, there exists scalars $\delta_x, \delta_y>0$ such that 
    \begin{align}
    &\mathbb{E}\left[ \Delta_t^x ~|~ 
    \sa{\mathcal{F}_{t}^y} \right] = 0, \quad \mathbb{P}\left[\| \Delta_t^x\|
    \geq \sa{s} ~|~ 
    \sa{\mathcal{F}_{t}^y}\right] \leq 2 e^{\frac{\sa{-s^2}}{2\delta_y^2}},\\
    & \mathbb{E}\left[ \Delta_t^y ~|~ 
    \sa{\mathcal{F}_{t+1}^x}\right] = 0, \quad
    \mathbb{P}\left[\| \Delta_t^y\|
    \geq \sa{s} ~|~ 
    \sa{\mathcal{F}_{t+1}^x}\right] \leq 2 e^{\frac{\sa{-s^2}}{2\delta_x^2}}.                 
    \end{align} 
\end{assumption}   
\ly{
For developing high-probability bounds in the learning context, it is common to assume that gradient estimates are sub-Gaussian \cite{juditsky2011solving,laguel2023high,ghadimi2012optimal}. \mg{While this assumption may not always hold (see e.g. \cite{gurbuzbalaban2021heavy,simsekli2019tail})}, it often holds when gradients are estimated via mini-batching, as a consequence of the central limit theorem. It will also hold when the gradient noise is bounded. Additionally, 
\sa{adoption of differential privacy mechanisms 
within gradient-based schemes}~\cite{chourasia2021differential,kuru-dp,altschuler2022privacy}, 
to enhance data privacy, results frequently in sub-Gaussian gradient errors.\looseness=-1
}
\vspace{-0.1in}
\section{High-probability bounds for \smagda}\label{sec-main-high-proba}
\vspace{-0.1in}

\mg{For analyzing \smagda, \sa{similar to~\cite{yang2022faster,zhang2020single},} we consider the following Lyapunov function:}
\begin{equation}
V_t
\triangleq 
V(x_t,y_t;z_t)
=
\hat{f}(x_t,y_t;z_t) + 2 P(z_t)-2\sa{\Psi(y_t;z_t)},
\label{defn-potential-fn}
\end{equation}
\sa{where $P(z)$ and $\Psi(\cdot;z)$ denote the saddle point value and the dual function value, respectively, of the auxiliary problem $\min_x\max_y \hat f(x,y;z)$ for any fixed $z$ and $\hat f$ defined in \eqref{auxiliary-obj}, i.e.,}
\begin{equation}\Psi(y;z)\triangleq \min_{x\in\mathbb{R}^{d_1}} \hat{f}(x,y;z) \quad \mbox{and} \quad P(z) \triangleq \min_{x\in\mathbb{R}^{d_1}}\max_{y\in\mathbb{R}^{d_2}} \hat{f}(x,y;z). \label{def-Psi-and-P}
\end{equation}
\mg{Next, we introduce a natural assumption, commonly made in the literature \cite{yang2022faster,yang2020global}. Without this assumption, there are pathological cases where primal function $\Phi(x)$ may be unbounded leading to divergence of gradient-based methods; an example would be $f(x,y) = -x^2 - y^2$ in dimension one}. 
\begin{assumption}
\label{assump:primal_function}
    \sa{\mg{Consider the primal function} $\Phi:\R^{d_1}\to\R$, \mg{i.e.,} $\Phi(x)=\max_{y\in\R^{d_2}}f(x,y)$. There exists $x^*\in\R^{d_1}$ such that $\Phi^*\triangleq\Phi(x^*)=\min_{x\in\R^{d_1}}\Phi(x)$.} 
\end{assumption}
\vspace{-0.05in}
\sa{Under \cref{assump:primal_function}, it immediately follows that $V_t\geq \Phi^*$ for all $t\in\N$ --since $P(z) - \Psi(y,z) \geq 0$, $\hat f(x,y;z)- \Psi(y;z)\geq 0$ and $P(z)\geq \Phi^*$ for all $x,y,z$.} 
We will next study the change $V_{t} - V_{t+1}$ in the Lyapunov function and show that an approximate descent property holds. First, we need two key lemmas that characterize the evolution of \sa{$\hat{f}(x_t,y_t;z_t)$} and \sa{$\Psi(y_t;z_t)$} over the iterations. 
\begin{lemma}
\label{primal-descent}
\mg{Suppose Assumptions \ref{assump-Lip-gradient}, \ref{assump-PL-cond}, \ref{assump-light-tail} and \ref{assump:primal_function} hold. Consider \smagda~given in Alg. \ref{alg-smagda} with $\tau_1\in (0, \frac{1}{p+\ell}]$ and $\beta \in (0,1]$.} For any $t \in \N$, we have:
{\small
\begin{align*}
\hspace{-0.04in}\hat{f}(x_{t+1}, y_{t+1}; z_{t+1}) - \mg{\hat f}(x_t, y_t; z_t)
\leq 
&-\frac{\tau_1}2\|\nabla_x\hat{f}(x_t, y_t; z_t)\|^2 +
\tau_2\left(1 + \frac{\ell}2 \tau_2\right) \|\nabla_y f(x_{t+1},y_t)\|^2 \\
&+
\tau_1((p+\mg{\ell})\tau_1 - 1) 
\langle \Delta_t^x, \nabla_x\hat{f}(x_t,y_t;z_t)\rangle  + \frac{p+\ell}2 \tau_1^2 \|\Delta_t^x\|^2\\
&+
\tau_2(1 + \ell\tau_2)\langle\nabla_yf(x_{t+1},y_t),\Delta_t^y\rangle 
-
\frac{p}{2\beta}\|z_t - z_{t+1}\|^2 + \frac{\ell\tau_2^2}2 \|\Delta_t^y\|^2.
\end{align*}}
\end{lemma}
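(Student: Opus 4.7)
The plan is to decompose the one-step change of $\hat{f}$ evaluated along the iterates into three telescoping pieces,
\[
\hat f(x_{t+1},y_{t+1};z_{t+1}) - \hat f(x_t,y_t;z_t)
= \underbrace{\hat f(x_{t+1},y_t;z_t) - \hat f(x_t,y_t;z_t)}_{\text{(P): primal step}}
+ \underbrace{\hat f(x_{t+1},y_{t+1};z_t) - \hat f(x_{t+1},y_t;z_t)}_{\text{(D): dual step}}
+ \underbrace{\hat f(x_{t+1},y_{t+1};z_{t+1}) - \hat f(x_{t+1},y_{t+1};z_t)}_{\text{(C): prox-center update}},
\]
and bound each term separately. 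The structural reason this works is that each \smagda{} substep changes only one of the arguments of $\hat f$, so each piece can be analyzed by a single descent/ascent lemma or by direct computation.

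For term (P), I would use that $\hat f(\,\cdot\,,y;z)$ is $(p+\ell)$-smooth (since $f$ is $\ell$-smooth in $x$ by \cref{assump-Lip-gradient} and $\tfrac{p}{2}\|x-z\|^2$ contributes Lipschitz constant $p$). The descent inequality applied to the update $x_{t+1}-x_t = -\tau_1\bigl[\nabla_x\hat f(x_t,y_t;z_t) + \Delta_t^x\bigr]$ yields, after expanding the quadratic $\|\nabla_x\hat f + \Delta_t^x\|^2$, a combination
\[
-\tau_1\!\left(1-\tfrac{p+\ell}{2}\tau_1\right)\|\nabla_x\hat f(x_t,y_t;z_t)\|^2
+\tau_1\bigl((p+\ell)\tau_1 - 1\bigr)\langle\Delta_t^x,\nabla_x\hat f(x_t,y_t;z_t)\rangle
+\tfrac{p+\ell}{2}\tau_1^2\|\Delta_t^x\|^2.
\]
The stepsize condition $\tau_1\le 1/(p+\ell)$ then gives $1-\tfrac{p+\ell}{2}\tau_1\ge\tfrac12$, which produces the leading term $-\tfrac{\tau_1}{2}\|\nabla_x\hat f\|^2$ in the statement.

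For term (D), note that $\hat f(x,\,\cdot\,;z)=f(x,\,\cdot\,)+\text{const}$, so $\nabla_y\hat f=\nabla_y f$ and $\hat f$ is $\ell$-smooth in $y$ by \cref{assump-Lip-gradient}. Applying the quadratic upper bound to the ascent step $y_{t+1}-y_t=\tau_2[\nabla_y f(x_{t+1},y_t)+\Delta_t^y]$ and expanding $\|\nabla_y f+\Delta_t^y\|^2$ gives exactly the $\tau_2(1+\tfrac{\ell}{2}\tau_2)\|\nabla_y f(x_{t+1},y_t)\|^2$, $\tau_2(1+\ell\tau_2)\langle\nabla_y f,\Delta_t^y\rangle$ and $\tfrac{\ell\tau_2^2}{2}\|\Delta_t^y\|^2$ terms. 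For term (C), only the quadratic regularizer depends on $z$, so
\[
\hat f(x_{t+1},y_{t+1};z_{t+1})-\hat f(x_{t+1},y_{t+1};z_t)
= \tfrac{p}{2}\bigl(\|x_{t+1}-z_{t+1}\|^2-\|x_{t+1}-z_t\|^2\bigr).
\]
Using $z_{t+1}-z_t=\beta(x_{t+1}-z_t)$, I would substitute $x_{t+1}-z_{t+1}=(1-\beta)(x_{t+1}-z_t)$ and $\|x_{t+1}-z_t\|^2=\beta^{-2}\|z_{t+1}-z_t\|^2$ to get the coefficient $\tfrac{p}{2}\cdot\tfrac{(1-\beta)^2-1}{\beta^2}=\tfrac{p}{2}\bigl(1-\tfrac{2}{\beta}\bigr)$, and then use $\beta\in(0,1]$ (hence $1-\tfrac{2}{\beta}\le -\tfrac{1}{\beta}$) to bound this piece above by $-\tfrac{p}{2\beta}\|z_t-z_{t+1}\|^2$.

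Summing the three bounds yields the claimed inequality. There is no real conceptual obstacle: the whole argument is deterministic (no expectations are taken, so \cref{assump-light-tail} and \cref{assump-PL-cond} play no role here, they are listed for consistency with the umbrella hypotheses), and the only care needed is the bookkeeping when expanding the quadratic cross-terms and the sign check $1-\tfrac{2}{\beta}\le -\tfrac{1}{\beta}$ that makes the last term a genuine negative contribution usable later in the Lyapunov descent (\cref{thm-ineq-lyapunov-descent}).
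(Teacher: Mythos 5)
Your proof is correct and matches the paper's argument essentially line for line: the same three-way telescoping decomposition into primal step, dual step, and prox-center update; the same $(p+\ell)$-smoothness and $\ell$-smoothness descent bounds with the gradient-noise split $\hat G_x = \nabla_x\hat f + \Delta_t^x$ and $G_y = \nabla_y f + \Delta_t^y$; and the same algebra for the $z$-update term using $x_{t+1}-z_{t+1}=(1-\beta)(x_{t+1}-z_t)$ and $\beta\le 1$. Your side remark that Assumptions~\ref{assump-PL-cond} and~\ref{assump-light-tail} are not actually used here is also accurate.
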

\begin{proof}
    \sa{The proof is provided in~\cref{sec:proof-primal-descent}.}
\end{proof}
\vspace{-0.1in}
\sa{From \cref{assump-Lip-gradient},} when $p>\ell$, 
the auxilliary function $\hat f(\cdot,y;z)$ is $(p-\ell)$-strongly convex 
\sa{for any fixed $y,z$; hence, 
there is a unique minimizer for every $y,z$ fixed, denoted by}
\begin{equation} x^*(y,z) \triangleq \argmin_{x\in\R^{d_1}} \hat{f}(x,y;z), \label{def-xstar-yz}
\end{equation}
\sa{i.e., $\Psi(y,z)=\hat f(x^*(\mgbis{y},z),y;z)$.} In the rest of the paper, we will take $p>\ell$ and exploit this property. The following lemma characterizes the change in the dual function $\Psi$. 
\begin{lemma}
\label{dual-descent} 
Suppose Assumptions \ref{assump-Lip-gradient}, \ref{assump-PL-cond}, \ref{assump-light-tail} and \ref{assump:primal_function} hold. \sa{Consider the \smagda{} iterate sequence $\{(x_t, y_t, z_t)\}_{t\in\N}$ for $p>\ell$. For any $t \in \N$, it holds that}
\begin{align*}
\hspace{-0.01in}
\Psi(y_{t+1}; z_{t+1}) - \Psi(y_t; z_t)
\geq 
&\tau_2 \langle \sa{\nabla_y f(x^*(y_t,z_t),y_t)},\nabla_yf(x_{t+1},y_t)\rangle +
\tau_2\langle \sa{\nabla_y f(x^*(y_t,z_t),y_t)}, \Delta_t^y\rangle \\
&-\frac{L_\Psi}{2}\tau_2^2 
\left(
\| \nabla_yf(x_{t+1},y_t) \|^2 + 2\langle \nabla_yf(x_{t+1},y_t), \Delta_t^y\rangle + \|\Delta_t^y\|^2 
\right) \\
&+\frac{p}{2} 
\langle z_{t+1} - z_t, z_{t+1} + z_t - 2x^*(y_{t+1}, z_{t+1})\big\rangle,
\end{align*}
\mg{where $L_\Psi\triangleq\ell\left(1 + \sa{\frac{p+\ell}{p-\ell}}\right)$ and the map $x^*(\cdot,\cdot)$ is defined by \eqref{def-xstar-yz}}.
\end{lemma}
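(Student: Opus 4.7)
The plan is to decompose the total change into two pieces that isolate the two updates performed by \smagda{}:
$$\Psi(y_{t+1}; z_{t+1}) - \Psi(y_t; z_t) = A_t + B_t,$$
where $A_t \triangleq \Psi(y_{t+1}; z_{t+1}) - \Psi(y_{t+1}; z_t)$ captures the effect of updating the proximal center, and $B_t \triangleq \Psi(y_{t+1}; z_t) - \Psi(y_t; z_t)$ captures the effect of the stochastic ascent step in $y$. I would lower-bound $A_t$ and $B_t$ separately and then sum.

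For $A_t$, I exploit the fact that $x^*(y_{t+1}, z_{t+1})$ is the (unique) minimizer of $\hat f(\cdot, y_{t+1}; z_{t+1})$ but is only a feasible point for the minimization defining $\Psi(y_{t+1}; z_t)$. Thus $\Psi(y_{t+1}; z_{t+1}) = \hat f(x^*(y_{t+1}, z_{t+1}), y_{t+1}; z_{t+1})$ while $\Psi(y_{t+1}; z_t) \leq \hat f(x^*(y_{t+1}, z_{t+1}), y_{t+1}; z_t)$. Subtracting, the $f$-terms cancel (they depend only on $(x, y_{t+1})$, not on $z$), leaving
$$A_t \geq \tfrac{p}{2}\bigl(\|x^*(y_{t+1}, z_{t+1}) - z_{t+1}\|^2 - \|x^*(y_{t+1}, z_{t+1}) - z_t\|^2\bigr).$$
The polarization identity $\|a-c\|^2 - \|a-b\|^2 = \langle c-b,\, c+b-2a\rangle$ with $a = x^*(y_{t+1}, z_{t+1})$, $b = z_t$, $c = z_{t+1}$ rewrites this as exactly the $\tfrac{p}{2}\langle z_{t+1}-z_t,\, z_{t+1}+z_t-2x^*(y_{t+1},z_{t+1})\rangle$ term appearing in the lemma.

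For $B_t$, the key step is to establish that $y \mapsto \Psi(y; z_t)$ has an $L_\Psi$-Lipschitz gradient. Since $p>\ell$, $\hat f(\cdot, y; z_t)$ is $(p-\ell)$-strongly convex in $x$ with unique minimizer $x^*(y, z_t)$; Danskin's (envelope) theorem then gives $\nabla_y \Psi(y; z_t) = \nabla_y f(x^*(y, z_t), y)$, noting that the proximal regularizer $\tfrac{p}{2}\|x-z_t\|^2$ does not depend on $y$. Differentiating the first-order optimality condition $\nabla_x f(x^*(y,z_t), y) + p(x^*(y,z_t) - z_t) = 0$ with respect to $y$ and combining with Assumption~\ref{assump-Lip-gradient} shows that $y \mapsto x^*(y, z_t)$ is Lipschitz; chaining this with the joint Lipschitzness of $\nabla_y f$ yields the smoothness of $\Psi(\cdot; z_t)$ with the constant $L_\Psi = \ell\bigl(1 + \tfrac{p+\ell}{p-\ell}\bigr)$ stated in the lemma. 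The standard quadratic lower bound from smoothness then gives
$$B_t \geq \langle \nabla_y \Psi(y_t; z_t),\, y_{t+1}-y_t\rangle - \tfrac{L_\Psi}{2}\|y_{t+1}-y_t\|^2.$$
Substituting the \smagda{} update $y_{t+1}-y_t = \tau_2(\nabla_y f(x_{t+1}, y_t) + \Delta_t^y)$ into the inner product produces the first two terms of the lemma, and expanding $\|y_{t+1}-y_t\|^2$ yields the $-\tfrac{L_\Psi}{2}\tau_2^2$ group.

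The main obstacle is establishing the smoothness of $\Psi(\cdot; z_t)$ cleanly with the stated constant $L_\Psi$; this requires an implicit-function-type argument that carefully tracks how the $(p-\ell)$-strong convexity of $\hat f(\cdot, y; z_t)$ and the cross-term Lipschitz bound from Assumption~\ref{assump-Lip-gradient} combine. Once smoothness is in hand, combining the lower bounds on $A_t$ and $B_t$ and rearranging completes the proof; no probabilistic arguments are needed at this stage because the noise terms $\Delta_t^y$ are carried algebraically.
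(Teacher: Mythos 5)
Your proposal is correct and follows essentially the same route as the paper's proof: the same decomposition into the proximal-center update term $A_t$ and the dual ascent term $B_t$, the same suboptimality trick for $A_t$ (evaluating $\hat f$ at $x^*(y_{t+1},z_{t+1})$ which is feasible but not optimal for $\Psi(y_{t+1};z_t)$), and the same application of $L_\Psi$-smoothness of $\Psi(\cdot;z_t)$ (the paper's Lemma~\ref{psi-smoothness}, built on Lemma~\ref{xstar-y-smoothness}) plus Danskin's theorem for $B_t$. The only difference is presentational: the paper defers the smoothness-of-$\Psi$ argument to a separate auxiliary lemma rather than sketching it inline.
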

\vspace{-0.2in}
\begin{proof}
    \sa{The proof is provided in~\cref{sec:proof-dual-descent}.}
\end{proof}
\vspace{-0.1in}
The next result provides an approximate descent property on the Lyapunov function. Its proof builds on \cref{primal-descent,dual-descent} and a descent property on the function $P$ (given in \cref{proximal-descent} of the Appendix); and 
leverages smoothness properties of the functions $\hat{f}$ and $\Psi$ and the map $(y,z)\mapsto x^*(y,z)$ as well as the strong convexity of $\hat f$ with respect to $x$. 
\begin{theorem}\label{thm-ineq-lyapunov-descent}
\mg{Suppose Assumptions \ref{assump-Lip-gradient}, \ref{assump-PL-cond}, \ref{assump-light-tail} and \ref{assump:primal_function} hold.} Consider \sa{the \emph{\smagda{}} algorithm with parameters
$p>\ell$, $\beta \in (0,1]$, $\tau_1 \in (0, \frac{1}{p+\ell}]$ and 
$\tau_2>0$ 
chosen such that} 
\[
\begin{split}
    c_0 \triangleq -\tau_2^2\ell \nu + \tau_2\left(1 - \frac{\ell}2 \tau_2 - L_\Psi \tau_2 \right) \geq 0,\quad 
    c_0' \triangleq \sa{\frac{p}{3\beta}} -\left( \frac{2p^2}{p-\ell} + 48\beta\frac{p^3}{(p-\ell)^2}\right) \geq 0,
\end{split}
\]
for some constant $\nu>0$, where $L_\Psi=\ell\left(1 + \sa{\frac{p+\ell}{p-\ell}}\right)$. Then,
\begin{equation}
\begin{aligned}
V_t - V_{t+1} \geq & c_1 \|\nabla_x\hat{f}(x_t,y_t;z_t)\|^2
                       + c_2  \|\nabla_y{f}(x^*(y_t,z_t),y_t)\|^2
                       + c_3\ly{\|x_t - z_t\|^2} \\                       
                    &+ c_4 \langle \nabla_x \hat{f}(x_t,y_t;z_t),  \Delta_t^x \rangle
                        + \langle c_5 \nabla_y{f}(x_t,y_t) + c_6 \nabla_y{f}(x^*(y_t,z_t),y_t), \Delta_t^y\rangle \\
                    &+ c_7 \|\Delta_t^x\|^2 
                     + c_8 \|\Delta_t^y\|^2,
\end{aligned}
\label{eq:prop:descent_lemma}
\end{equation} 
for some constants $\{c_i\}_{i=1}^8\subset\R$ \mg{that are explicitly given in Appendix \ref{sec-proof-of-thm-ineq-lyapunov-descent}}, which \mg{may} depend \mg{on $\nu$, as well as the problem and \smagda{} parameters \sa{that can be chosen such that $c_1,c_2,c_3>0$}}.
\end{theorem}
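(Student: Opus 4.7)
The plan is to lower-bound $V_t-V_{t+1}$ by combining the three descent lemmas. Writing
\[
V_t - V_{t+1} = -\bigl[\hat{f}(x_{t+1},y_{t+1};z_{t+1})-\hat{f}(x_t,y_t;z_t)\bigr] + 2\bigl[\Psi(y_{t+1};z_{t+1})-\Psi(y_t;z_t)\bigr] - 2\bigl[P(z_{t+1})-P(z_t)\bigr],
\]
I will invoke \cref{primal-descent} to upper-bound the first bracketed quantity, \cref{dual-descent} to lower-bound the second, and the appendix result \cref{proximal-descent} to upper-bound the third. Adding these with the appropriate signs collects all ingredients of the right-hand side of~\eqref{eq:prop:descent_lemma} in unprocessed form, after which the remaining work is algebraic consolidation.

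The key structural facts needed during consolidation are: (i) since $p>\ell$, the map $\hat{f}(\cdot,y;z)$ is $(p-\ell)$-strongly convex, so $x^*(y,z)$ in \eqref{def-xstar-yz} is well defined and Lipschitz in $(y,z)$ with constants determined by $p,\ell$; (ii) $\Psi(\cdot;\cdot)$ is $L_\Psi$-smooth with $L_\Psi=\ell(1+\tfrac{p+\ell}{p-\ell})$; (iii) the proximal update gives $z_{t+1}-z_t=\beta(x_{t+1}-z_t)$; and (iv) $\nabla_y f$ is $\ell$-Lipschitz, which lets me replace $\nabla_y f(x_{t+1},y_t)$ by $\nabla_y f(x_t,y_t)$ or $\nabla_y f(x^*(y_t,z_t),y_t)$ at the cost of an error controlled by $\|x_{t+1}-x_t\|$ or $\|x_t-x^*(y_t,z_t)\|$; the latter distance is in turn bounded by $\|\nabla_x \hat f(x_t,y_t;z_t)\|/(p-\ell)$ via strong convexity. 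The inner-product term $\tau_2\langle\nabla_y f(x^*(y_t,z_t),y_t),\nabla_y f(x_{t+1},y_t)\rangle$ from \cref{dual-descent} is split via a Young-type inequality with a free parameter $\nu>0$, producing $\tau_2^2\ell\nu\,\|\nabla_y f(x_{t+1},y_t)\|^2$ and a complementary $\|\nabla_y f(x^*(y_t,z_t),y_t)\|^2$ term; combining this with the $L_\Psi \tau_2^2$ and $\tau_2(1+\tfrac{\ell}{2}\tau_2)$ contributions shows the net coefficient of $\|\nabla_y f(x_{t+1},y_t)\|^2$ is $c_0$, and the stipulated $c_0\geq 0$ is exactly what removes this term from the final bound in favor of $\|\nabla_y f(x^*(y_t,z_t),y_t)\|^2$ (after another Lipschitz swap) contributing to $c_2$.

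For the $\|x_t-z_t\|^2$ coefficient, I unpack $\tfrac{p}{2}\langle z_{t+1}-z_t,z_{t+1}+z_t-2x^*(y_{t+1},z_{t+1})\rangle$ from \cref{dual-descent} using $z_{t+1}-z_t=\beta(x_{t+1}-z_t)$ and Lipschitzness of $x^*$ in $(y,z)$, then combine with the $-\tfrac{p}{2\beta}\|z_t-z_{t+1}\|^2$ term from \cref{primal-descent} and the proximal-descent contribution on $P$; the resulting net coefficient reduces, after further Young inequalities that trade $\|x_{t+1}-z_t\|^2$ against $\|x_t-z_t\|^2$ plus $\|\nabla_x\hat f(x_t,y_t;z_t)\|^2$ and noise squares, to the quantity $c_0'$, so the hypothesis $c_0'\geq 0$ is precisely what yields $c_3>0$. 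The remaining cross terms involving the gradient noise are collected to form the $c_4,c_5,c_6$ inner-product coefficients (keeping the natural filtration-adapted forms so that conditional unbiasedness can be exploited later), while the $\|\Delta_t^x\|^2$ and $\|\Delta_t^y\|^2$ pieces assemble into $c_7,c_8$. The principal difficulty is thus not conceptual but combinatorial: tracking every $\tau_1,\tau_2,\beta,p,\ell,\nu$ dependence through the Young splittings and Lipschitz swaps so that the final coefficient identities match those stated in~\eqref{eq:prop:descent_lemma}, and verifying that the two displayed nonnegativity conditions $c_0\geq 0$ and $c_0'\geq 0$ are indeed tight enough to secure $c_1,c_2,c_3>0$ while leaving $c_4,\dots,c_8$ free to carry the martingale and noise contributions handled in the subsequent concentration step.
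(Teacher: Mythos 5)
Your proposal follows the paper's own proof essentially step by step: the same three-way decomposition of $V_t-V_{t+1}$ via \cref{primal-descent}, \cref{dual-descent}, and \cref{proximal-descent}; the same Young split with free parameter $\nu$ to absorb the $\|\nabla_y f(x_{t+1},y_t)\|^2$ contributions into $c_0$; the same use of $z_{t+1}-z_t=\beta(x_{t+1}-z_t)$, $(p-\ell)$-strong convexity, and Lipschitzness of $x^*(\cdot,\cdot)$ to reduce the $P$ and $\Psi$ proximal terms to the $c_0'$ coefficient of $\|x_t-z_t\|^2$. One small bookkeeping imprecision: the Young split of the $A_2$-type cross term produces a complementary $\|x_{t+1}-x^*(y_t,z_t)\|^2$ term (bounded via \cref{subopt-xt+1} into $\|\nabla_x\hat f\|^2$ and noise), not directly a $\|\nabla_y f(x^*(y_t,z_t),y_t)\|^2$ term; the latter arises from the separate lower bound $\|\nabla_y f(x_{t+1},y_t)\|^2\geq \tfrac12\|\nabla_y f(x^*(y_t,z_t),y_t)\|^2 - \ell^2\|x_{t+1}-x^*(y_t,z_t)\|^2$, which is roughly the "Lipschitz swap" you allude to.
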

\vspace{-0.15in}
\begin{proof} The proof is given in Appendix \ref{sec-proof-of-thm-ineq-lyapunov-descent}.
\end{proof}
\vspace{-0.10in}
With some specific choice of parameters in \smagda, \mg{we can obtain simplifications to the coefficients $\{c_i\}_{i=1}^8$ from \cref{thm-ineq-lyapunov-descent}} (explicitly given in Appendix \ref{sec-proof-of-thm-ineq-lyapunov-descent}). As such, this \mg{yields} 
the following corollary.
\begin{corollary}\label{coro-Lyap-descent}
\mg{\sa{Under the premise} of \cref{thm-ineq-lyapunov-descent}}, 
let 
$p=2\ell$, $\tau_1 \in (0, \frac{1}{3\ell}]$, $\tau_2
=
\frac{\tau_1}{48}, \beta = \sa{\alpha}\mu\tau_2$ \na{for 
$\alpha\in(0,\frac{1}{406}]$}. Then, \sa{$\frac{\tilde A_{t+1} - \tilde A_t}{\tau_1} \leq -\tilde B_t + \tilde C_{t+1} + \tilde D_{t+1}$} for all $t\in\N$, where $\nu = \frac{12}{\tau_1\ell}$ and
\vspace{-0.1in}
{\small
\begin{equation*}
\begin{aligned}
\tilde A_t &\triangleq \sa{\tau_1 V_t,} 
    \quad   \tilde B_t \triangleq  \frac{\tau_1}{\ly{5}}\|\nabla_x \hat{f}(x_t,y_t;z_t)\|^2 
                                + \frac{\tau_2}{8} \|\sa{\nabla_y {f}(x^*(y_t,z_t),y_t)}\|^2 
                                + \frac{\beta p}{8}\|x_t - z_t\|^2, \\
            \tilde C_{t+1} &\triangleq 
            \Big[\Big(\ly{192} \beta p 
            \Big(\frac{p+\ell}{p-\ell}\Big)^2
            \ell^2\tau_2^2 
            + \frac{4\ell}{\nu} 
            + 4 c_0 \ell^2
            + \ly{2 c_0' \beta^2}\Big)\tau_1^2 
            + \Big((p+\ell)\tau_1 - 1\Big)\tau_1\Big]
                    \langle \nabla_x \hat{f}(x_t,y_t;z_t),  \Delta_t^x \rangle \\
                    &\quad \sa{+\tau_2\langle\left(1 + \ell\tau_2 + 2L_\Psi \tau_2 \right)\nabla_y f(x_t,y_t)-2 \sa{\nabla_y {f}(x^*(y_t,z_t),y_t)},\Delta_t^y \rangle,} \\
            \tilde D_{t+1} &\triangleq \ly{2\ell \tau_1^2}\|\Delta_t^x\|^2 + \ly{8 \ell \tau_2^2} \|\Delta_t^y\|^2.
\end{aligned}
\end{equation*}}%
\end{corollary}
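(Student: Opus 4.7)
The corollary is a direct specialization of Theorem~\ref{thm-ineq-lyapunov-descent}: once the parameters $p,\tau_1,\tau_2,\beta,\nu$ are fixed in terms of $\ell$, $\mu$, $\alpha$, and a free step size $\tau_1$, the coefficients $\{c_i\}_{i=1}^8$ become explicit expressions, and the inequality \eqref{eq:prop:descent_lemma}, after multiplication by $\tau_1$, can be reorganized into the three groups $\tilde B_t$ (deterministic nonnegative terms), $\tilde C_{t+1}$ (cross terms linear in noise), and $\tilde D_{t+1}$ (quadratic-in-noise terms). The proof is therefore an algebraic reduction; the nontrivial part is verifying that the hypotheses of the theorem are satisfied, and bounding the simplified coefficients to obtain the clean numerical constants $1/5$, $1/8$, $1/8$, $2\ell$, $8\ell$ that appear in the statement.

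\textbf{Step 1: set up the specialization.} With $p=2\ell$, one has $p-\ell=\ell$, $p+\ell=3\ell$, and $L_\Psi=\ell(1+\tfrac{p+\ell}{p-\ell})=4\ell$. The stepsize condition $\tau_1\in(0,1/(3\ell)]$ matches $\tau_1\in(0,1/(p+\ell)]$. I then substitute $\tau_2=\tau_1/48$ and $\beta=\alpha\mu\tau_2$ everywhere.

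\textbf{Step 2: verify the hypotheses $c_0\geq 0$ and $c_0'\geq 0$.} For $c_0$, with $\nu=12/(\tau_1\ell)$, I compute
$c_0=-\tau_2^2\ell\nu+\tau_2(1-\tfrac{\ell}{2}\tau_2-L_\Psi\tau_2)=-12\tau_2^2/\tau_1+\tau_2(1-\tfrac{9\ell}{2}\tau_2)=\tau_2\bigl(\tfrac{3}{4}-\tfrac{9\ell}{2}\tau_2\bigr)$,
which is positive since $\tau_2\leq 1/(144\ell)$ forces $\tfrac{9\ell}{2}\tau_2\leq 1/32$. For $c_0'$, I compute
$c_0'=\tfrac{p}{3\beta}-\bigl(\tfrac{2p^2}{p-\ell}+48\beta\tfrac{p^3}{(p-\ell)^2}\bigr)=\tfrac{2\ell}{3\beta}-8\ell-384\beta\ell$,
and use $\beta=\alpha\mu\tau_2\leq\mu/(406\cdot 144)$ (since $\tau_2\leq 1/(144\ell)$ and $\mu\leq\ell$) to see that $2\ell/(3\beta)$ dominates by a wide margin, so $c_0'\geq 0$.

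\textbf{Step 3: simplify and regroup the coefficients.} Starting from \eqref{eq:prop:descent_lemma} in Theorem~\ref{thm-ineq-lyapunov-descent}, I multiply through by $\tau_1$ so that the Lyapunov combination becomes $\tilde A_t=\tau_1 V_t$. I then look at each of the three deterministic squared-gradient (or squared-distance) terms and check that, after plugging in $p=2\ell$, $\tau_2=\tau_1/48$, $\beta=\alpha\mu\tau_2$, $\nu=12/(\tau_1\ell)$ and $\tau_1\leq 1/(3\ell)$, we obtain the lower bounds $c_1\geq \tau_1/5$, $c_2\geq \tau_2/8$ (so that $\tau_1 c_2$ becomes the stated $\tau_1\tau_2/8$ coefficient on the corresponding term), and $c_3\geq \beta p/8$. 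The main arithmetic input here is that the choice $\alpha\leq 1/406$ is precisely calibrated so that the negative contributions to $c_3$ coming from terms like $192\beta p(\tfrac{p+\ell}{p-\ell})^2\ell^2\tau_2^2$ and from the $c_0'$ term are dominated by the positive part, leaving $\beta p/8$. The noise cross-terms involving $\langle\Delta_t^x,\nabla_x\hat f(x_t,y_t;z_t)\rangle$ and $\langle\nabla_y f,\Delta_t^y\rangle$ are kept as written (their prefactors are exactly the $c_4,c_5,c_6$ from the theorem multiplied by $\tau_1$, already matching the definition of $\tilde C_{t+1}$). For $\tilde D_{t+1}$, I read off the coefficients of $\|\Delta_t^x\|^2$ and $\|\Delta_t^y\|^2$ from $c_7,c_8$ and upper bound them by $2\ell\tau_1^2$ and $8\ell\tau_2^2$ respectively, using $p+\ell=3\ell$ and $L_\Psi=4\ell$ together with $\tau_1\leq 1/(3\ell)$.

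\textbf{Expected obstacle.} The proof itself is not conceptually hard; the difficulty is purely bookkeeping. The term that needs the most care is the $\|x_t-z_t\|^2$ coefficient, because several contributions of different orders in $\beta$, $\ell$, and $\tau_2$ compete, and this is exactly where the choice $\alpha\leq 1/406$ must be shown to suffice. One must also be careful with the factor $(p+\ell)\tau_1-1$ appearing in $\tilde C_{t+1}$: although it is $\leq 0$ under $\tau_1\leq 1/(3\ell)$, it is kept with its sign as part of the martingale-difference-like term and does not need to be bounded away from zero. Once these verifications are done, the displayed inequality in the corollary follows by substitution and collecting terms.
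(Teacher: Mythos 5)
Your plan coincides with the paper's proof: specialize the coefficients of Theorem~\ref{thm-ineq-lyapunov-descent} under the stated parameter choice, verify $c_0\ge 0$ and $c_0'\ge 0$ (your computations here are correct, and in particular $c_0\ge\frac12\tau_2$ and $c_0'\in[\frac{p}{4\beta},\frac{p}{3\beta}]$), and then establish $c_1\ge\tau_1/5$, $c_2\ge\tau_2/8$, $c_3\ge p\beta/8$, $c_7\ge-2\ell\tau_1^2$, $c_8\ge-8\ell\tau_2^2$, after which the corollary follows by regrouping terms. One small misidentification in your ``expected obstacle'' paragraph: the tight constraint $\alpha\le 1/406$ is actually what makes $c_2\ge\tau_2/8$ work, by controlling $\frac{24\beta p}{(p-\ell)\mu}\bigl(1+\tau_2\frac{2p\ell}{p-\ell}\bigr)^2$ against $c_0/2\ge\tau_2/4$; the $\|x_t-z_t\|^2$ coefficient $c_3=c_0'\beta^2/2$ only needs $c_0'\ge p/(4\beta)$, which already holds under the much weaker condition $\alpha\le\frac{3}{2}(\sqrt{5}-1)$.
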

\vspace{-0.1in}
\begin{proof} The proof is given in Appendix \ref{sec-coro-Lyap-descent}.
\end{proof}
\vspace{-0.1in}
Next, we provide a concentration inequality which will be key to obtain our high-probability bounds. 
\begin{theorem}\label{theorem-meta}
\label{meta-theorem}
Let \sa{$\{\F_t\}_{t\in\N}$} be a filtration on $(\Omega, \F, \P)$. Let \sa{$A_t, B_t, \mg{C_t}, D_t$} be four stochastic processes adapted to the filtration such that there exist $\mg{\sigma_C}, \sa{\sigma_D} > 0$ and $\camera{\tau_1} > 0$ such that for all $t \in \N$: $(i)$ $B_t \geq 0$, $(ii)$ $\E[e^{\lambda \mg{C}_{t+1}} \mid \F_t] \leq e^{\lambda^2 \mg{\sigma_C^2} B_t}$ \sa{for all} $\lambda > 0$, $(iii)$ $\E[e^{\lambda \sa{D}_{t+1}} \mid \F_t] \leq e^{\lambda \sa{\sigma_D^2}}$ for all $\lambda \in \left[0 , \frac{1}{\sigma_D^2}\right]$ and $(iv)$ $\frac{A_{t+1} - A_t}{\tau_1} \leq -B_t + \mg{C_{t+1}} + \sa{D_{t+1}}$.
Then, for any \sa{$\bq \in (0,1]$}, we have
{\small
\[
\Prob\left(
\frac{\tau_1}{2} \sum_{t=0}^{T-1} B_t 
\leq 
(A_0 - A_T) + \camera{\tau_1} \sigma_D^2 \sa{T} + 2\tau_1\max\{2\mg{\sigma_C^2}, \sigma_D^2\}\log\Big(\frac{1}{\bq}\Big)
\right)
\geq 1-\bq.
\]
}
\end{theorem}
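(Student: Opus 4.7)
The plan is to turn the MGF assumptions (ii) and (iii) into an exponential supermartingale, bound its tail by Markov's inequality, and then combine the resulting control on $\sum(C_{t+1}+D_{t+1})$ with the telescoping of (iv) to isolate $\sum B_t$. Specifically, summing (iv) over $t=0,\dots,T-1$ (the $A_t$ terms telescope) yields
\[
\sum_{t=0}^{T-1} B_t \;\leq\; \frac{A_0 - A_T}{\tau_1} + \sum_{t=0}^{T-1}(C_{t+1} + D_{t+1}),
\]
so the task reduces to an upper tail bound for $\sum_{t=0}^{T-1}(C_{t+1}+D_{t+1})$ in terms of $\sum_{t=0}^{T-1} B_t$, $T$, and $\log(1/\bq)$.

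Since $C_{t+1}$ and $D_{t+1}$ are not assumed independent, I first fuse their MGF bounds via Cauchy--Schwarz: for every $\lambda\in(0, 1/(2\sigma_D^2)]$,
\[
\E\bigl[e^{\lambda(C_{t+1}+D_{t+1})}\mid \F_t\bigr] \;\leq\; \sqrt{\E[e^{2\lambda C_{t+1}}\mid \F_t]\,\E[e^{2\lambda D_{t+1}}\mid \F_t]} \;\leq\; \exp\!\bigl(2\lambda^2 \sigma_C^2 B_t + \lambda \sigma_D^2\bigr),
\]
where the validity range on $\lambda$ comes from condition (iii) applied with $2\lambda$. Using that $B_t$ is $\F_t$-measurable, this estimate implies that the process
\[
M_T \;\triangleq\; \exp\!\Big(\lambda\sum_{t=0}^{T-1}(C_{t+1}+D_{t+1}) - 2\lambda^2 \sigma_C^2 \sum_{t=0}^{T-1} B_t - \lambda \sigma_D^2 T\Big)
\]
is a nonnegative supermartingale with $\E[M_T]\leq \E[M_0]=1$.

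Applying Markov's inequality to $M_T$ yields $\Prob(M_T \geq 1/\bq)\leq \bq$, which after taking logarithms becomes: with probability at least $1-\bq$,
\[
\sum_{t=0}^{T-1}(C_{t+1}+D_{t+1}) \;\leq\; 2\lambda\sigma_C^2 \sum_{t=0}^{T-1} B_t + \sigma_D^2 T + \frac{1}{\lambda}\log(1/\bq).
\]
I then choose $\lambda = \min\{1/(4\sigma_C^2),\,1/(2\sigma_D^2)\}$, which is the largest value compatible with (iii) that also guarantees $2\lambda\sigma_C^2 \leq 1/2$, so that $1/\lambda = 2\max\{2\sigma_C^2,\sigma_D^2\}$. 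Substituting this high-probability estimate into the telescoped inequality from the first step lets me absorb $\tfrac12\sum B_t$ into the left-hand side, and multiplying through by $\tau_1$ delivers exactly the claimed bound.

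The main obstacle I anticipate is handling the potential dependence between $C_{t+1}$ and $D_{t+1}$ (conditionally on $\F_t$), resolved by the Cauchy--Schwarz fusion above at the cost of only a factor of two inside the exponent, and then simultaneously meeting two constraints on $\lambda$: the validity range $\lambda\leq 1/(2\sigma_D^2)$ required by the sub-exponential condition (iii), and the absorption threshold $\lambda\leq 1/(4\sigma_C^2)$ required so that $2\lambda\sigma_C^2\sum B_t$ can be swallowed by $\tfrac12\sum B_t$. The appearance of $2\max\{2\sigma_C^2,\sigma_D^2\}$ in the final bound is exactly the trace of this balance. The remaining steps are a routine exponential-supermartingale / Chernoff calculation, so I do not expect technical difficulty beyond the bookkeeping of constants.
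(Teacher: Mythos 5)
Your proof is correct and is essentially the same argument as the paper's. The only cosmetic difference is in the order of operations: the paper bundles everything into a single process $S_T = \frac{\tau_1}{2}\sum_{t<T} B_t - (A_0 - A_T)$, shows $\E[e^{\lambda S_T}] \le e^{\lambda\tau_1\sigma_D^2 T}$ by the same one-step Cauchy--Schwarz fusion and the same choice $\gamma=\tau_1/2$, and applies a Chernoff bound at the end; you instead telescope (iv) first, build a standalone supermartingale for $\sum(C_{t+1}+D_{t+1})$ normalized by $\sum B_t$ and $T$, apply Markov, and substitute back, absorbing $\tfrac12\sum B_t$ into the left-hand side. After the change of variable $\lambda \mapsto \lambda\tau_1$, the two computations are line-for-line the same, and both produce the constant $2\max\{2\sigma_C^2,\sigma_D^2\}$ from the same two competing constraints on $\lambda$.
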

\begin{proof}
The proof is provided in Appendix \ref{sec-proof-theorem-meta}.
\end{proof}

\begin{remark}
\camera{
While the above concentration inequality seems tailored to the analysis of sm-AGDA, 
it can also aid in deriving high probability bounds for many other first-order methods for nonconvex minimax problems that outputs a randomized iterate; indeed, the majority of existing Lyapunov arguments in the nonconvex setting are built upon constructing telescoping 
sums in line with Theorem~\ref{theorem-meta}, e.g., stochastic alternating GDA~\cite{yang2022faster} for NCPL minimax problems and optimistic GDA \cite{mokhtari2020convergence} for strongly convex-strongly concave problems.
}
\end{remark}

We next present our main result which provides a high-probability bound on the $\smagda$ iterates. The main idea of the proof is to apply Theorem \ref{theorem-meta} to the processes \mg{introduced in Corollary \ref{coro-Lyap-descent}}.
\begin{theorem}\label{thm-high-proba-bound}
\mg{In the \sa{premise} of Corollary \ref{coro-Lyap-descent}}, 
\mg{\emph{\smagda{}} iterates $(x_t, y_t)$ \sa{for $\camera{\tau_1\leq\frac{1}{3\ell}}$} satisfy}
{\small
\begin{align*}
&\Prob\Bigg(
\frac1{T}
\sum_{t=0}^{T-1} 
\left[
\|\nabla_xf(x_t,y_t)\|^2 + \kappa \|\nabla_y f(x_t,y_t)\|^2\right]
\leq \mathcal{Q}_{\bq,T},
\Bigg) \geq 1-\bq,\quad \forall~T\in\N,\quad \forall~\bq\in (0,1],
\end{align*}}%
\sa{for some $\mathcal{Q}_{\bq,T}=\cO\Big(\frac{\kappa(\Delta_0+b_0)}{\camera{\tau_1}T}+\kappa(\delta_x^2+\delta_y^2)\Big(\camera{\tau_1}\ell+\frac{1}{T}\log\Big(\frac{1}{\bq}\Big)\Big)\Big)$ explicitly stated in \cref{sec-proof-of-high-proba}, where}
$\Delta_0 \triangleq \Phi(z_0) - 
\Phi^*$, $b_0 \triangleq 2\sup_{x,y}\{\hat f(x_0,y;z_0)-\hat{f}(x,y_0;z_0)\}$. 
\end{theorem}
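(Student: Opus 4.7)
My plan is to invoke the concentration inequality of \cref{theorem-meta} on the processes $(\tilde A_t, \tilde B_t, \tilde C_{t+1}, \tilde D_{t+1})$ supplied by \cref{coro-Lyap-descent}, adapted to the filtration $\{\mathcal{F}_t^y\}_{t\in\N}$ from \eqref{def-Fty-filtration}. Condition $(i)$ of the meta-theorem, non-negativity of $\tilde B_t$, is immediate since $\tilde B_t$ is a weighted sum of squared norms, and condition $(iv)$, the one-step descent $\tau_1^{-1}(\tilde A_{t+1}-\tilde A_t)\leq -\tilde B_t + \tilde C_{t+1}+\tilde D_{t+1}$, is exactly the conclusion of \cref{coro-Lyap-descent}. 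Once $(ii)$ and $(iii)$ are verified, \cref{theorem-meta} yields a high-probability bound on $\sum_{t=0}^{T-1}\tilde B_t$, which I would then translate via smoothness arguments into a bound on $\frac{1}{T}\sum_{t=0}^{T-1}[\|\nabla_x f(x_t,y_t)\|^2+\kappa\|\nabla_y f(x_t,y_t)\|^2]$, and combine with a bound on the initial Lyapunov gap.

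The bulk of the work lies in checking $(ii)$ and $(iii)$ under \cref{assump-light-tail}. For $(ii)$ I would apply the tower property $\E[\,\cdot\mid\mathcal{F}_t^y]=\E[\E[\,\cdot\mid\mathcal{F}_{t+1}^x]\mid\mathcal{F}_t^y]$, computing the inner conditional MGF with respect to $\Delta_t^y$ (sub-Gaussian with parameter $\delta_x$ given $\mathcal{F}_{t+1}^x$) and then the outer with respect to $\Delta_t^x$ (sub-Gaussian with parameter $\delta_y$ given $\mathcal{F}_t^y$). The coefficient of $\Delta_t^x$ in $\tilde C_{t+1}$ is a constant multiple of $\nabla_x \hat f(x_t,y_t;z_t)$, whose squared norm is already in $\tilde B_t$. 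The coefficient of $\Delta_t^y$ is a linear combination of $\nabla_y f(x_t,y_t)$ and $\nabla_y f(x^*(y_t,z_t),y_t)$; the second sits in $\tilde B_t$, and the first is controlled by $\ell$-smoothness together with the $(p-\ell)$-strong convexity of $\hat f(\cdot,y_t;z_t)$ via $\|\nabla_y f(x_t,y_t)-\nabla_y f(x^*(y_t,z_t),y_t)\|\leq \ell\|x_t-x^*(y_t,z_t)\|\leq \tfrac{\ell}{p-\ell}\|\nabla_x\hat f(x_t,y_t;z_t)\|$, so that its squared norm is also absorbed into $\tilde B_t$. This yields $\E[e^{\lambda\tilde C_{t+1}}\mid\mathcal{F}_t^y]\leq e^{\lambda^2\sigma_C^2\tilde B_t}$ with $\sigma_C^2=\mathcal{O}(\tau_1(\delta_x^2+\delta_y^2))$. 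For $(iii)$, the standard sub-exponentiality of the squared norm of a sub-Gaussian vector gives $\E[e^{\lambda\tilde D_{t+1}}\mid\mathcal{F}_t^y]\leq e^{\lambda\sigma_D^2}$ in a neighbourhood of zero, with $\sigma_D^2=\mathcal{O}(\ell\tau_1^2(\delta_x^2+\delta_y^2))$, since $\tilde D_{t+1}=2\ell\tau_1^2\|\Delta_t^x\|^2+8\ell\tau_2^2\|\Delta_t^y\|^2$ and $\tau_2=\tau_1/48$.

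Having verified the hypotheses, \cref{theorem-meta} gives, with probability at least $1-\bq$, $\sum_{t=0}^{T-1}\tilde B_t \leq \tfrac{2}{\tau_1}(\tilde A_0-\tilde A_T)+2\sigma_D^2 T+4\max\{2\sigma_C^2,\sigma_D^2\}\log(1/\bq)$. I would then show $\|\nabla_x f(x_t,y_t)\|^2+\kappa\|\nabla_y f(x_t,y_t)\|^2\leq \tfrac{C\kappa}{\tau_1}\tilde B_t$ for some absolute constant $C$: splitting $\nabla_x\hat f=\nabla_x f+p(x_t-z_t)$ yields $\|\nabla_x f\|^2\lesssim \|\nabla_x\hat f\|^2+p^2\|x_t-z_t\|^2$, and the Lipschitz-plus-strong-convexity argument above gives $\|\nabla_y f(x_t,y_t)\|^2\lesssim \|\nabla_y f(x^*(y_t,z_t),y_t)\|^2+\|\nabla_x\hat f(x_t,y_t;z_t)\|^2$; the scalings $p=2\ell$, $\tau_2=\tau_1/48$, and $\beta=\alpha\mu\tau_2$ then produce the factor $\kappa/\tau_1$ against each of the three components of $\tilde B_t$. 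For the initial gap, using $V_T\geq \Phi^*$ (from \cref{assump:primal_function} together with the non-negativity of $P-\Psi$ and $\hat f-\Psi$), $\tilde A_0-\tilde A_T\leq\tau_1(V_0-\Phi^*)$; decomposing $V_0-\Phi^*=[\hat f(x_0,y_0;z_0)-\Psi(y_0;z_0)]+[P(z_0)-\Psi(y_0;z_0)]+[P(z_0)-\Phi^*]$ bounds the first two terms by $b_0/2$ each (since $\Psi(y_0;z_0)=\min_x\hat f(x,y_0;z_0)$ makes them duality-gap quantities) and the third by $\Phi(z_0)-\Phi^*=\Delta_0$, since $P(z_0)\leq \Phi(z_0)$. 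Combining these pieces and dividing by $T$ produces the claimed form of $\mathcal{Q}_{\bq,T}$.

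The main obstacle I anticipate is the MGF estimate in $(ii)$: one must carefully decouple the two noises revealed at different points within the same iteration and show that \emph{both} coefficients are pointwise dominated by components of $\tilde B_t$, with constants small enough that $\sigma_C^2$ scales only linearly in $\tau_1$. If this scaling is achieved, $\log(1/\bq)$ enters $\mathcal{Q}_{\bq,T}$ only through an additive $\varepsilon^{-2}$ term, preserving the $\varepsilon^{-4}$ leading order that matches the in-expectation rate of~\cite{yang2022faster}.
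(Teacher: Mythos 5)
Your proposal matches the paper's proof essentially step for step: you invoke \cref{theorem-meta} on the processes from \cref{coro-Lyap-descent}, verify conditions $(ii)$–$(iii)$ by the tower property across the Gauss–Seidel filtration and sub-Gaussian MGF bounds (the paper's Lemma~\ref{lem:c_t_d_t_light_tail}), translate $\tilde B_t$ to the stationarity metric via the split $\nabla_x\hat f=\nabla_x f+p(x_t-z_t)$ and the $\ell$/$(p-\ell)$ argument (Lemma~\ref{lem:stationarity_translation}), and bound $V_0-\Phi^*$ by $\Delta_0+b_0$ through exactly the decomposition into the two duality-gap terms and the Moreau-envelope inequality $P(z_0)\le\Phi(z_0)$. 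The only cosmetic caveat is that the constant $C$ in $\|\nabla_xf\|^2+\kappa\|\nabla_yf\|^2\le C\kappa\tilde B_t/\tau_1$ scales like $1/\alpha$, so it is absolute only once the parameter $\alpha$ is fixed; this does not affect the $\mathcal{O}$-form of $\mathcal{Q}_{\bq,T}$.
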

{\color{red}
}
\vspace{-0.18in}
\begin{proofsketch} \sa{
Let the stochastic processes $A_t,B_t,C_t,D_t$ in \cref{theorem-meta} be chosen as $A_t=\tilde A_t$, $B_t=\tilde B_t$, $C_t=\tilde C_t$, $D_t=\tilde D_t$ where $\tilde A_t,\tilde B_t,\tilde C_t,\tilde D_t$ are defined in~\cref{coro-Lyap-descent} and $\camera{\tau_1}>0$ be the primal stepsize in \smagda; 
according to \cref{coro-Lyap-descent}, we have $\frac{A_{t+1} - A_t}{\tau_1} \leq -B_t + \mg{C_{t+1}} + D_{t+1}$ for $t\in\N$.} Since $\Delta_t^x$ and $\Delta_t^y$ admit sub-Gaussian tails, 
\sa{it can be shown that the} conditions of \cref{theorem-meta} are satisfied for 
\sa{some} appropriate constants $\mg{\sigma_C^2}$ and $ \sigma_D^2$. \sa{Therefore, 
\cref{theorem-meta} 
implies a tail bound on $\sum_{t=0}^{T-1} \tilde B_t$. Using the relation between $f$ and $\hat f$, one can also show that} 
$\|\nabla_xf(x_t,y_t)\|^2 + \kappa \|\nabla_y f(x_t,y_t)\|^2
=\cO(\sa{\tilde B_t}),$
 for all $~t\in\N$.
\lybis{This last inequality allows to} translate the tail bound for $\sum_{t=0}^{T-1}\tilde B_t$ to a tail bound for $\sum_{t=0}^{T-1} \|\nabla_xf(x_t,y_t)\|^2 + \kappa \|\nabla_y f(x_t,y_t)\|^2$. \sa{The details of the proof is provided in \cref{sec-proof-of-high-proba} of the supplementary material.} 
\end{proofsketch}
\begin{remark} 
\sa{Suppose \emph{\smagda{}}, given in Alg.~\ref{alg-smagda}, is run for $T$ iterations, and it outputs a randomly selected iterate $(x_U,y_U)$, where the random iteration index $U$ 
is chosen uniformly at random from the set $\{0,1,\dots, T-1\}$, i.e.,} $\mathbb{P}(U=t)=1/T$ for $t=0,1,\dots, T-1$. 
Theorem \ref{thm-high-proba-bound} implies that 
{\small
\begin{align*}
&\Prob\Bigg(
\|\nabla_xf(x_U,y_U)\|^2 + \kappa \|\nabla_y f(x_U,y_U)\|^2
\leq \sa{\mathcal{Q}_{\bq,T}}
\Bigg) \geq 1-\bq.
\vspace{-0.25in}
\end{align*}}%
\lybis{
Furthermore, in comparison with existing complexity bounds in expectation for \smagda~\cite{yang2022faster},
our quantile bound requires only an overhead of order $\mathcal{O}(\varepsilon^{-2} \log(1/\bar{q}))$. Unless $\bar q$ is very small, this is typically negligible in comparison to the $\mathcal{O}(\varepsilon^{-4})$ already present in rates in expectation.}
\end{remark}

\begin{remark}\label{remark-markov-ineq}
\camera{
In contrast to high-probability bounds derived from standard Markov-type arguments, our approach achieves significantly better scaling with respect to both $\mgrev{\bar q}$ and $\epsilon$. Specifically, consider an oracle that can generate a sample $(\hat x, \hat y)$ with $\mathbb{E}[\|\nabla f(\hat x, \hat y)\|] \leq \epsilon$ after $\mathcal{G}(\epsilon)$ iterations/stochastic samples. \mgrev{In particular, \cite{yang2022faster} shows that one can take $\mathcal{G}(\epsilon)=\mathcal{O}\left(\frac{\ell \kappa^2 \delta^2}{\epsilon^4} + \frac{\kappa \ell}{\epsilon^2}\right)$ for the \smagda{} algorithm assuming the variance of the stochastic gradient is bounded by $\delta^2$}. A naive high-probability bound could be constructed by ensuring $\mathbb{E}[\|\nabla f(\hat x, \hat y)\|] \leq \mgrev{\bar q} \cdot \epsilon$ and applying Markov’s inequality to yield an $\epsilon$-stationary point with probability at least $1 - \mgrev{\bar q}$. However, this approach results in a complexity bound of $\mathcal{G}(\mgrev{\bar q}\epsilon) = O\left(\frac{\ell \kappa^2 \delta^2}{\mgrev{\bar q}^4 \epsilon^4} + \frac{\kappa \ell}{\mgrev{\bar q}^2 \epsilon^2}\right)$, \mgrev{leading to a significantly worse} 
dependence on $\mgrev{\bar q}$ \mgrev{than ours}. Alternatively, following the rationale in~\cite{ghadimi2013stochastic,xu2024stochastic}, \mgrev{to generate a high-probability bound, one can run the \smagda{} algorithm} 
$m = \Omega(\log(1/\bar q))$ \mgrev{times in parallel; where in each run we generate an $\varepsilon/2$-solution}
and \mgrev{among the solutions, we} select the one with the smallest estimated gradient norm. \mgrev{This would require} \mgrev{$m \mathcal{G}(\epsilon)= \mathcal{O}\left(\log(1/\bar q)\frac{\ell \kappa^2 \delta^2}{\epsilon^4} + \log(1/\bar q)\frac{\kappa \ell}{\epsilon^2}\right) $} \mgrev{iterations/stochastic samples. In this approach, the logarithmic term $\log(\frac{1}{\bar q})$
multiplies the high-order $\mathcal{O}(\frac{1}{\varepsilon^4})$ term, whereas in our approach it only affects the second-order $\mathcal{O}(\frac{1}{\varepsilon^2})$ term. Therefore, our results scale better with respect to $\bar{q}$ and $\varepsilon$. In addition, such a (multiple) parallel run approach, is often} %
impractical in streaming/online settings, where data arrives sequentially, and real-time processing is essential.
}    
\end{remark}
\begin{corollary}\label{coro-iter-complexity-high-proba}
    \ly{
    Under the premise of Theorem~\ref{thm-high-proba-bound}, \sa{consider running the \emph{\smagda}~method for some fixed number of iterations $T\in\N$ with parameters chosen as $\camera{\tau_1=\min\Big(\frac{1}{3\ell}, \frac{48\sqrt{\Delta_0 + \sa{b_0}}}{\sqrt{T\ell  \delta^2}}\Big)}$ and} $\tau_2 = \camera{\tau_1}/48$ where $\delta^2 \triangleq \delta_x^2 + \delta_y^2$. \sa{Then, 
    for any $\bq\in(0,1)$, \smagda{} can compute an $(\varepsilon, \varepsilon/\sqrt{\kappa})$ stationarity point 
    with probability at least $1-\bq$ when the number of iterations $T$ is fixed to}
    }
    $
    \mg{
    T_{\varepsilon, \bq} 
    = \mathcal{O} 
    \Big(
    \frac{(\Delta_0+\sa{b_0})\ell\kappa}{\varepsilon^2}
    +
    \frac{{\delta}^2\log\left(\frac{1}{\bq}\right)\sa{\kappa}}{\varepsilon^2}
    +
    \frac{\mg{\delta^2}(\Delta_0+\sa{b_0}) \ell \kappa^2}{\varepsilon^4}
    \Big)
    }$
    \sa{which requires $T_{\varepsilon, \bq}$ stochastic gradient calls.}
\end{corollary}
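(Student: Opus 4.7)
The plan is to apply \cref{thm-high-proba-bound} and choose the number of iterations $T$ so that the tail bound $\mathcal{Q}_{\bar q,T}$ is at most $\varepsilon^2$. Indeed, $\|\nabla_x f(\tilde x,\tilde y)\|^2 + \kappa\|\nabla_y f(\tilde x,\tilde y)\|^2 \leq \varepsilon^2$ immediately implies both $\|\nabla_x f(\tilde x,\tilde y)\|\leq \varepsilon$ and $\|\nabla_y f(\tilde x,\tilde y)\|\leq \varepsilon/\sqrt{\kappa}$, which is exactly $(\varepsilon,\varepsilon/\sqrt{\kappa})$-stationarity (interpreted either for the averaged measure, or, as in the remark following \cref{thm-high-proba-bound}, for the uniformly selected iterate). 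Consequently the whole proof reduces to finding the smallest $T$ for which each of the three summands constituting $\mathcal{Q}_{\bar q,T}$, namely $\kappa(\Delta_0+b_0)/(\tau_1 T)$, $\kappa\delta^2 \tau_1\ell$, and $\kappa\delta^2 \log(1/\bar q)/T$, is at most a constant fraction of $\varepsilon^2$, under the prescribed choice $\tau_1=\min\bigl(1/(3\ell),\,48\sqrt{(\Delta_0+b_0)/(T\ell\delta^2)}\bigr)$ and $\tau_2=\tau_1/48$.

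First, I would use the upper envelope $\tau_1 \leq 48\sqrt{(\Delta_0+b_0)/(T\ell\delta^2)}$ from the $\min$ to bound the variance term by $\kappa\delta^2\tau_1\ell \leq 48\kappa\sqrt{\delta^2(\Delta_0+b_0)\ell/T}$. Requiring this to be $O(\varepsilon^2)$ yields $T = \Omega\bigl(\kappa^2\delta^2(\Delta_0+b_0)\ell/\varepsilon^4\bigr)$, which is exactly the high-order $\varepsilon^{-4}$ term in $T_{\varepsilon,\bar q}$. Next, writing $1/\tau_1 = \max\bigl(3\ell,\,\sqrt{T\ell\delta^2}/(48\sqrt{\Delta_0+b_0})\bigr)$ and substituting into the first summand expresses it as the maximum of $3\ell\kappa(\Delta_0+b_0)/T$ and $\kappa\sqrt{\delta^2(\Delta_0+b_0)\ell/T}/48$; the former branch forces $T = \Omega\bigl(\ell\kappa(\Delta_0+b_0)/\varepsilon^2\bigr)$, which contributes the first term of $T_{\varepsilon,\bar q}$, while the latter branch is already dominated by the variance-term requirement derived above. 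Finally, the third summand $\kappa\delta^2\log(1/\bar q)/T \leq O(\varepsilon^2)$ demands $T = \Omega\bigl(\kappa\delta^2\log(1/\bar q)/\varepsilon^2\bigr)$, supplying the remaining term of $T_{\varepsilon,\bar q}$.

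Taking the maximum of these three lower bounds on $T$, and absorbing constants, yields the stated complexity; since each iteration of \smagda{} performs $\mathcal{O}(1)$ stochastic gradient evaluations, the same bound is also the stochastic gradient oracle complexity, which closes the argument. The main (mild) obstacle is bookkeeping: carefully tracking which branch of the $\min$ in $\tau_1$ dominates in each term, so that the three requirements on $T$ are simultaneously verified without losing a factor anywhere; no additional probabilistic argument beyond \cref{thm-high-proba-bound} is required, since all concentration issues have already been handled in its proof via \cref{theorem-meta}.
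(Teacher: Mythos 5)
Your proposal is correct and follows essentially the same route as the paper: substitute the prescribed $\tau_1$ into the explicit form of $\mathcal{Q}_{\bq,T}$ from \cref{thm-high-proba-bound}, obtain three summands, and require each to be $\mathcal{O}(\varepsilon^2)$, which gives the three lower bounds on $T$ whose maximum is $T_{\varepsilon,\bq}$. The only superficial difference is that you explicitly track which branch of the $\min$ in $\tau_1$ is active in each term, while the paper simply reports the resulting three-term bound and imposes the three conditions directly.
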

\vspace{-0.15in}
\begin{proof} \mg{This is a direct consequence of Theorem \ref{thm-high-proba-bound}, a proof is provided in \cref{app-proof-complexity}}.
\end{proof}
\vspace{-0.2in}
\section{Numerical Illustration\ly{s}}
\vspace{-0.1in}
\ly{In this section, we illustrate the 
\mg{performance} of \smagda{}. We consider an NCPL problem with synthetic data, as well as a nonconvex 
\sa{DRO} problem using real datasets.}
\ly{For synthetic experiments, we used an ASUS Laptop model Q540VJ with 13th Generation Intel Core i9-13900H using 16GB RAM and 1TB SSD hard drive. 
For the DRO experiments, we used a high-performance computing cluster with automatic GPU selection (NVIDIA RTX 3050, RTX 3090, A100, or Tesla P100) based on GPU availability, ensuring optimal use of computational resources.}\looseness=-1

%
\vspace{-0.1in}
\myparagraph{Synthetic experiments on an \ys{NCPL} game} 
%
We consider the \sa{following} NCPL problem: 
{\small
\begin{equation}\label{eq:ncpl_problem}
\min_{x \in \R^{d_1}}\max_{y \in \R^{d_2}}
m_1 
\left[ 
\|x\|^2 + \sin(3\sqrt{\|x\|^2 + 1}) 
\right]
+
x^\top K y
-
m_2 
\left[
\|y\|^2 + 3\sin^2(\|y\|)
\right],
\end{equation}
}which can be interpreted as a game between two players \cite{nouiehed2019solving,laguel2023high} where $m_1, m_2 >0$ are constants and the symmetric matrix $K$ is set randomly, similar to the standard bilinear game setting considered in \cite{laguel2023high}. More specifically, we set $K = 10\tilde{K}/\|\tilde{K}\|$, $\tilde{K} = (M + M^\top)/2$ where $M$ is a $d \cross d$ matrix \sa{with entries being i.i.d centered Gaussian having variance $\sigma^2$.} This problem is nonconvex in $x$ (without satisfying the PL condition in $x$). Though the exact gradient is known, we consider a stochastic gradient oracle, 
\sa{which returns \textit{noisy} gradients} similar to the setting of \cite{laguel2023high,can2019accelerated,fallah2020optimal,aybat2020robust}, i.e., for each iteration $t \in \{0,...,T-1\}$, $G_x (x_t, y_t;\xi_{t+1}^x) = \nabla_x f(x_t,y_t) + \xi_{t+1}^x$ and $G_y(x_{t+1}, y_t, \xi_{t+1}^y) = \nabla_y f(x_{t+1},y_t) + \xi_{t+1}^y$, with $(\xi_{t+1}^x)_{t\geq0} \iid \Norm(\0, \delta^2I_{d_1})$ and $(\xi_{t+1}^y)_{t\geq0} \iid \Norm(\0, \delta^2I_{d_2})$ where $I_d$ is the $d \cross d$ identity matrix and $ \delta^2$ is some constant variance. \mg{This setting satisfies all our assumptions, and our high-probability results (Theorem \ref{thm-high-proba-bound} and Coro. \ref{coro-iter-complexity-high-proba}) are applicable.} 
\sa{In this experiment, we fix $d_1 = d_2 = 30$, 
and $m_1 = m_2 = \delta^2 = \sigma^2 = 1$.
The solution to this problem is 
$(x^*,y^*) = (\0,\0)$.} 

\emph{Experimental results.} The parameters of the problem are explicitly available as $\mu = 2m_2$, and $\ell = \max\{12m_1, 
\sa{8} m_2, \|K\|\}$. To illustrate Theorem \ref{thm-high-proba-bound}, we set $\beta=\frac{\tau_2\mu}{1600}, \tau_2=\frac{\tau_1}{48}, p=2\ell$ and we considered \sa{two cases: $\tau_1 = \frac{1}{3\ell}$ (long step) and $\tau_1 = \frac{1}{12\ell}$ (short step)} to explore the behavior of \smagda{} for different stepsizes. We generated $N=25$ sample paths for $T=10,000$ iterations, and \mg{on the left panel of} Fig. \ref{fig: ncpl}, \sa{for each iteration $t$,} we report the average of $\mathcal{M}_\kappa(t) \triangleq \|\nabla_xf(x_t,y_t)\|^2 + \kappa \|\nabla_y f(x_t,y_t)\|^2$ over \sa{$N=25$ realizations corresponding to different} sample paths, and \sa{the shaded region 
depicts the range statistic, i.e., for every fixed iteration $t$, we shade the vertical line between the maximum and minimum values of} $\mathcal{M}_\kappa(t)$ for the 25 paths. 
\sa{\mg{On the right panel} of Fig. \ref{fig: ncpl}, we also report 
the squared distance,} $\sa{\mathcal{I}(t)}\triangleq\|x_t - x^*\|^2+\|y_t - y^*\|^2$, \sa{in a similar manner}. 
We observe that the range \sa{statistic for} 
$\mathcal{M}_\kappa(t)$ 
\sa{diminishes to a value inversely proportional to $T$ as $t\to T$}; this is inline with our theoretical results in Theorem \ref{thm-high-proba-bound}. 
\sa{The existence of sinusoidal terms in the minimax objective is a source for oscillatory behavior in these figures. As $t\to T$, 
both $\mathcal{M}_\kappa(t)$ and $\mathcal{I}(t)$ exhibit oscillations, of which amplitude are smaller for the small step size 
compared to the large step-size --at the expense of a slower convergence;} 
the iterate paths are not as oscillatory. 
\begin{figure}[t!]
    \centering
    \includegraphics[width=\textwidth]{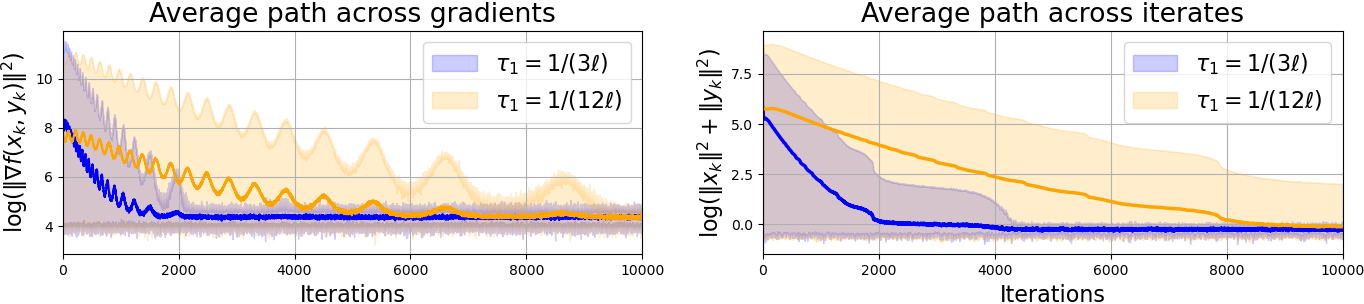}
    \caption{\label{fig: ncpl}
    \mgbis{NCPL game with $\tau_1=\frac{1}{3\ell}$ (long step) and $\tau_1 = \frac{1}{12\ell}$ (short step). \textbf{(Left)} 
   Average of $\mathcal{M}_k(t)$ over 25 sample paths vs. iterations $t$. \textbf{(Right)} Average of  $\mathcal{I}(t)$ over 25 sample paths vs. iterations $t$. In both plots, shaded regions depict the range statistic over 25 sample paths.}}
    \label{fig:fig-ncpl}
    \vspace{-0.2in}
\end{figure}

\begin{wrapfigure}{r}{0.53\textwidth} 
\vspace{-18pt}
\hspace{-18pt}
  \begin{center}
    \includegraphics[width=0.53\textwidth]{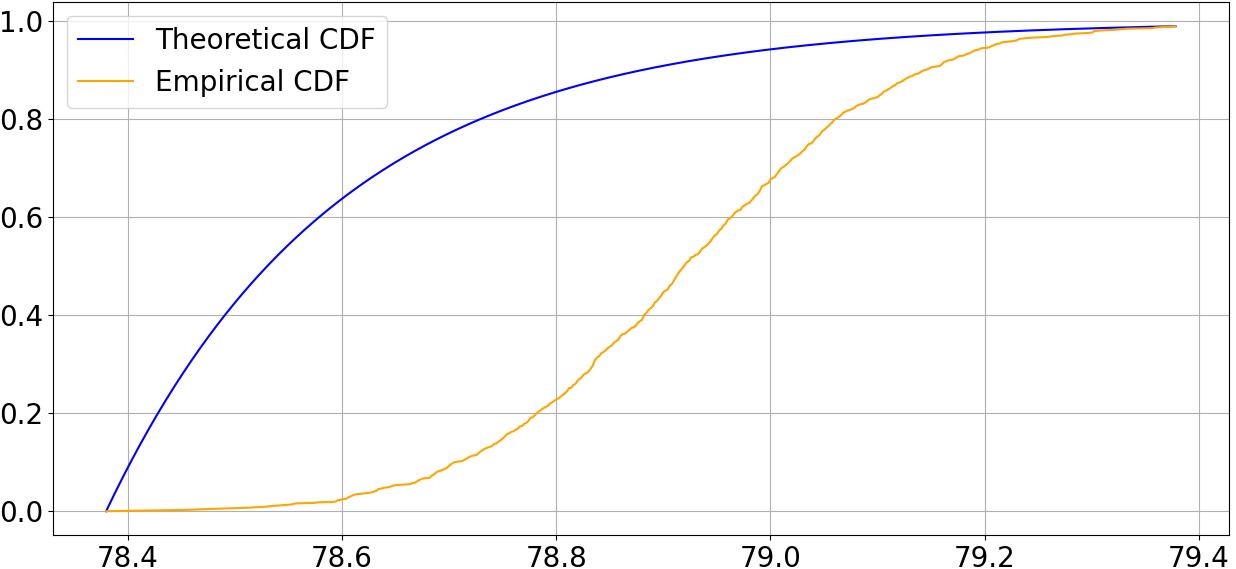}
  \end{center}
  \vspace{-9pt}
  \caption{\label{fig-quantile-comparison}\ly{Comparison of our theoretical upper bound in~\cref{thm-high-proba-bound} and the empirical stationarity measure of \smagda~for Problem~\eqref{eq:ncpl_problem}. Results are reported as cumulative distribution functions}.}
  \vspace{-13pt}
\end{wrapfigure}
For $T=10,000$ and $\tau_1=\frac{1}{3\ell}$ fixed, we next consider the path averages $X_T \triangleq \frac1{T} \sum_{t=0}^{T-1} \mathcal{M}_\kappa(t)$. Indeed, based on $1000$ sample paths, each for $T = 10,000$ iterations, we compare the empirical quantiles of $X_T$ with the theoretical upper bound $\mathcal{Q}_{q,T}$ on its quantiles (implied by \cref{thm-high-proba-bound}).
\mg{Figure \ref{fig-quantile-comparison} shows the cumulative distribution function (CDF) of the empirical distribution alongside the theoretical explicit upper bound \(\mathcal{Q}_{q,T}\) \ly{for the stationarity measure $\|\nabla_xf(x_U, y_U) + \kappa \nabla_yf(x_U, y_U)\|$ with $U$ uniform over $\{1, \ldots, T\}$}. 
The details of the empirical quantile estimation are provided Appendix \ref{appendix-num-details}} due to space considerations. 
We observe that the empirical CDF has a sigmoid-like shape, while the theoretical quantiles that lie above display a concave form. This difference may arise because the theoretical quantile bounds \(\mathcal{Q}_{q,T}\) are designed to capture the worst-case behavior across the class of NCPL problems, whereas this specific NCPL example may not represent the worst-case scenario.\looseness=-1
\vspace{-0.1in}
\myparagraph{Distributionally Robust nonconvex Logistic Regression} We consider the DRO problem
\vspace{-2mm}
%
\begin{figure}[ht]
    \centering
    \includegraphics[width=\textwidth]{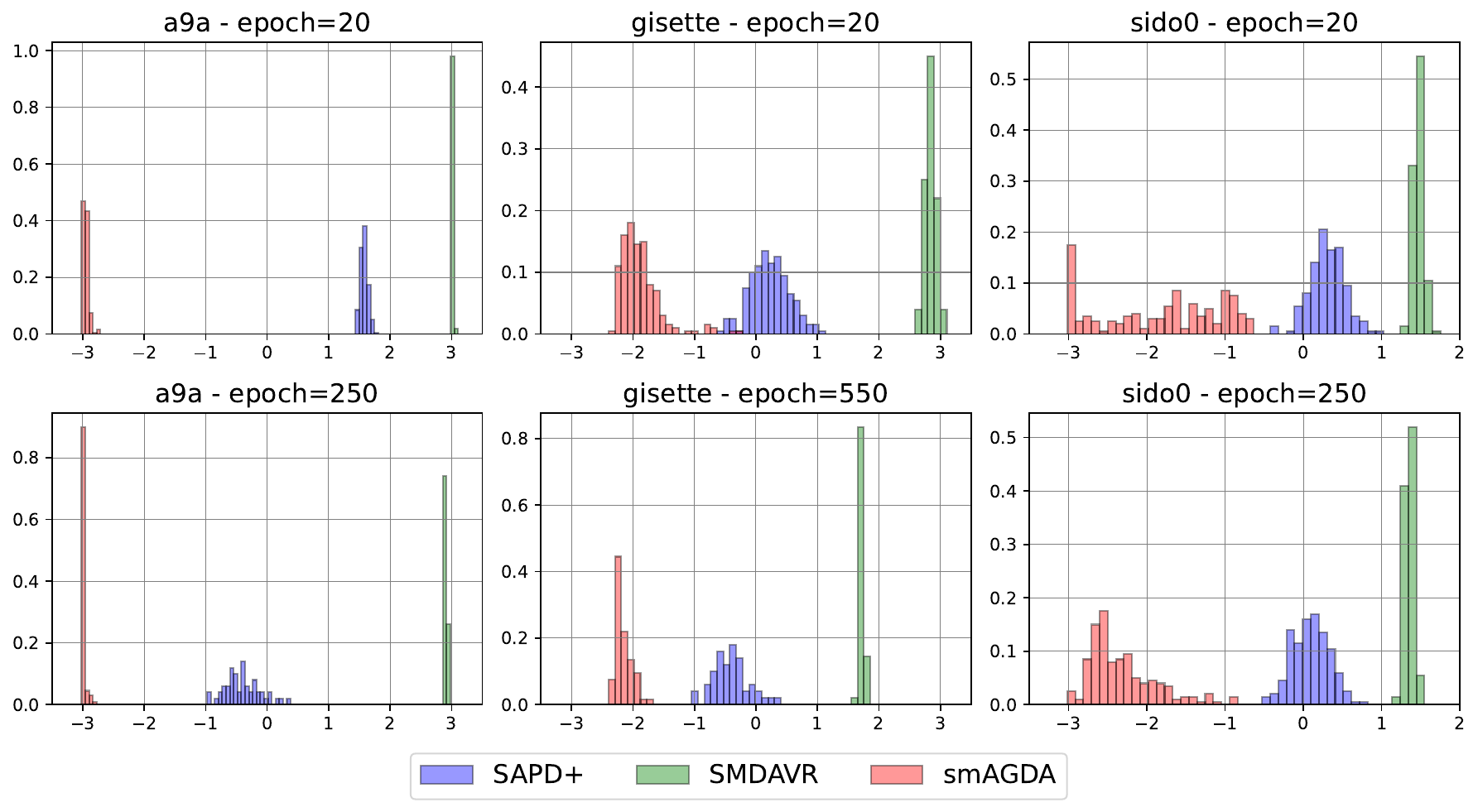}
    \caption{Histograms for the stationarity measure $\log_{10}\|\nabla f(x_t, y_t) \|^2$ for \smagda{} and baseline algorithms (SAPD+, SMDA, and SMDAVR) on \texttt{a9a}, \texttt{gisette}, and \texttt{sido0} datasets. Each algorithm is run 200 times. First row reports histograms after 20 epochs, and second row reports histograms at the end of training.} 
    \label{fig:dro}
    \vspace{-0.2in}
\end{figure}%
\begin{equation}\label{def:dro} 
    \min_{x \in \mathbb{R}^{d_1}} \max_{y \in Y} \Big( \frac{1}{d_2}\sum_{j=1}^{d_2} y_j \ell_j(x; a_j, b_j) + r(x) - g(y) \Big),
\end{equation}\vspace{-1mm}where $\ell_j(x; a_j, b_j) = \log(1 + \exp(-b_j \mathbf{a}^\top_j \mathbf{x}))$ denotes the logistic loss tied to an input-output pair $(a_j, b_j) \in \R^{d_1} \times \{-1, 1\}$, and $r(x) = \lambda_1 \sum_{i=1}^{d_1} \frac{\ly{\omega} x_i^2}{1 + \ly{\omega} x_i^2}$ a primal regularization for the learning model $x\in \R^{d_1}$.
We allow the distribution $y \in Y \triangleq \{{y}\in \mathbb{R}^{d_2}:~y\geq 0, \mathbf{1}^\top {y} = 1 \}$ to deviate from the uniform distribution $u\triangleq\frac{1}{d_2} \boldsymbol{1}$ where 
$ \boldsymbol{1}$ denotes the vector of ones, and we penalize the distance between $y$ and $u$ through the regularization map $g: y \mapsto \frac{\lambda_2d_2}{2}\|y-u\|^2$. We set the regularization parameters as $\ly{\omega}=10$, $\lambda_1 = 10e^{-4}$, and $\lambda_2 = 1$. Since $r$ is nonconvex with a Lipschitz gradient and $g$ is strongly convex, this is an NCSC problem.  

%

\vspace{-0.07in}
\emph{Datasets, Algorithms and Hyperparameters.}
%
We consider three standard datasets for this problem, which are summarized as follows: 
The \texttt{sido0} dataset~\cite{sido_dataset} has $d_1=4932$ and $d_2=12678$. 
The \texttt{gisette} dataset~\cite{guyon2004result} has 
$d_1=5000$ and $d_2=6000$. 
Finally, the \texttt{a9a} dataset~\cite{chang2011libsvm} has $d_1=123$ and $d_2=32561$.
We compare the performances of~\smagda{} against \ly{two} other baselines that achieve state-of-the-art performance in expectation for these datasets~\cite{zhang2022sapd+}. 
Specifically, we evaluate \texttt{SAPD+}, which is a two-loop method where the subproblems are solved by the \texttt{SAPD} algorithm \cite{zhang2021robust}, 
\ly{and \texttt{SMDAVR}, a variance reduced extension of \texttt{SMDA} algorithm~\cite{huang2021efficient}}. Since~\eqref{def:dro} is constrained, we \ly{augment} \smagda{} with a projection step in the update of the \verb+y+ variable onto the $d_2$-dimensional simplex and \lybis{adopt} the analogous stationarity metric $\|\nabla_x f(x_t,y_t)\|^2 + \|P_Y \nabla_y f(x_t,y_t)\|^2$ for constrained problems where $P_Y$ is a projection to the dual domain $Y$.
For all datasets, the primal stepsize $\tau_1$ of \smagda{} is tuned via a grid-search over $\{10^{-k}, 1\leq k \leq 4\}$. The dual stepsize $\tau_2$ is set as $\tau_2 = \frac{\tau_1}{48}$. Similarly, $\beta$ is estimated through a grid-search over $\{10^{-k}, 3\leq k \leq 5\}$. The parameter $p$ is also tuned similarly on a grid, our code is provided as a supplementary document for the details. 
For other methods, our hyperparameters \ly{are} tuned in accordance with~\cite{zhang2022sapd+}.\looseness=-1
%

\vspace{-0.07in}
\emph{Experimental results.}
%
In Figure~\ref{fig-quantile-comparison}, we plot histograms of our stationarity metric, 
across $200$ runs in a logarithmic scale.  
We report the stationarity measure both in early phase of the training (i.e. $t=20$ epochs), and in later phases (i.e. $t=550$ epochs for \texttt{gisette} and $t=250$ epochs for \texttt{a9a} and \texttt{sido0}). \mg{Our theoretical results \ly{are} presented for unconstrained problems in the dual, therefore they are not directly applicable to the DRO problem where the dual domain is constrained. That being said, we observe that they are still predictive of performance in the DRO setting. More specifically,} Figure~\ref{fig-quantile-comparison} is supportive of our high-probability complexity bounds for \smagda, in the sense that the distribution of the stationarity metric for \smagda{} tends to concentrate. Notably, it outperforms the concentration behaviour of the other baselines.
Furthermore, we observe that histograms for all baselines hardly evolve after $20$ epochs. 
This is consistent with previous experiments carried on \mg{these datasets}~\cite{zhang2022sapd+} where performance was measured in terms of the decay of the average loss and its standard deviation. \mg{As such, we conclude that
\smagda{} performs better both in the early phase and the later stage.} \mg{In our experience, we observed \smagda{} could accomodate larger stepsizes compared to the other algorithms, which may have contributed to its good performance.}
\vspace{-0.15in}
\section{Conclusion}
\vspace{-0.1in}
\mg{Existing high-probability bounds only apply to convex/concave minimax problems or non-monotone variational inequality problems under restrictive assumptions to our knowledge. We close this gap by providing the first high-probability complexity guarantees for nonconvex/PL minimax problems satisfying the PL-condition in the dual variable for the \smagda{} method. We also provide numerical results for an NCPL example and for nonconvex distributionally robust logistic regression.} 
\section*{Acknowledgements}
Yassine Laguel, Yasa Syed and Mert Gürbüzbalaban’s research are supported in part by the grants Office of
Naval Research Award Numbers N00014-21-1-2244 and N00014-24-1-2628, National Science Foundation (NSF) CCF-1814888 and NSF DMS-2053485. Necdet Serhat Aybat’s work was supported in part by the Office of Naval Research
Awards N00014-21-1-2271 and N00014-24-1-2666.
\clearpage

\bibliographystyle{plain}
\bibliography{ref,refs_mert,refs_aybat,robust,optim}

\newpage

\appendix
\newpage
\vspace{0.5in}
\begin{center}

\Large \bf High-probability complexity guarantees\\ for nonconvex minimax problems \vspace{3pt}
~\\
{\large APPENDIX}
~\\
\end{center}
\vspace{0.2in}
\mg{
The organization of the Appendix is as follows:
\begin{itemize}
    \item In Section \ref{sec-app-notation}, we summarize the key notations that are used throughout the main paper and the Appendix. 
    \item In Section \ref{app-sec-lemma-proofs}, we provide the proofs of Lemmas from Section \ref{sec-main-high-proba} that were used for deriving the approximate Lyapunov descent property.
    \item In Section \ref{sec-proof-of-thm-ineq-lyapunov-descent}, we present the proof of Theorem \ref{thm-ineq-lyapunov-descent} which provides an approximate descent property in terms of the Lyapunov function for the \smagda{} iterates.
    \item In Section \ref{sec-coro-Lyap-descent}, we provide the proof of Corollary \ref{coro-Lyap-descent}, which studies particular choice of parameters within Theorem \ref{thm-ineq-lyapunov-descent}.
    \item In Section \ref{sec-proof-theorem-meta} we present a proof of Theorem \ref{theorem-meta} which provides a concentration inequality. 
    \item In Section \ref{sec-proof-of-high-proba}, we provide the proof of Theorem \ref{thm-high-proba-bound} which yields high-probability results for the \smagda{} algorithm. 
    \item In Section \ref{app-proof-complexity}, we provide a proof of Corollary \ref{coro-iter-complexity-high-proba} that provides a high-probability (iteration) complexity bound for the \smagda{} iterates.
    \item In Section \ref{app-section-supp-lemmas}, we present supplementary lemmas that were used to derive the results.
    \item In Section \ref{appendix-num-details}, we provide further details about numerical experiments. 
\end{itemize}
} 
\section{Notation}\label{sec-app-notation}
The key notations that will be used throughout the Appendix is as follows:
\begin{itemize}
    \item $\hat{f}(x,y;z) = f(x,y) + \frac{p}2 \|x - z\|^2$ denotes the auxiliary problem.
    \item $\Psi(y;z) = \min_x \hat{f}(x,y;z)$ is the dual function of the auxiliary problem.
    \item $\Phi(x;z) = \max_y \hat{f}(x,y;z)$ is the primal function of the auxiliary problem.
    \item $P(z) = \min_x\max_y \hat{f}(x,y;z)$ is the optimal value of \sa{the primal problem $\min_x\Phi(x;z)$} 
    \item $x^*(y,z) = \argmin_x \hat{f}(x,y;z)$ 
    for given $y,z$ in the auxiliary function.
    \item $x^*(z) = \argmin_x \Phi(x;z)$ \sa{is the unique optimal solution to the auxiliary primal problem.} 
    \item $Y^*(z) = \argmax_y \Psi(y;z)$ \sa{is the set of optimal solutions to the auxiliary dual problem.}
    \item $y^+(z) = y + \tau_2 \sa{\nabla_y {f}(x^*(y,z), y)}$ denotes \sa{an update in $y$ 
    in the direction of the gradient of the dual function, i.e., along the direction $\nabla\Psi(y;z)=\nabla_y {f}(x^*(y,z), y)$.} 
    \item $\hat{G}_x(x,y,\xi;z) \triangleq G_x(x,y,\xi) + p(x-z)$ 
    \item $\Delta_t^x = G_x(x_t,y_t,{\xi_{t+1}^x}) - \nabla_xf(x_t,y_t)$ and $\Delta_t^y = \sa{G_y(x_{t+1},y_t,{\xi_{t+1}^y}) - \nabla_yf(x_{t+1},y_t)}$ denote the gradient error, i.e., the difference between the stochastic estimates of the partial gradients and the  exact partial gradients.
\end{itemize}

\section{Proofs of Lemmas from Section \ref{sec-main-high-proba}}\label{app-sec-lemma-proofs}
%
\subsection{Proof of Lemma \ref{primal-descent}}\label{sec:proof-primal-descent}
%
\begin{replemma}{primal-descent}
\mg{Suppose Assumptions \ref{assump-Lip-gradient}, \ref{assump-PL-cond} and \ref{assump-light-tail} hold. Consider \smagda, \sa{stated in~\cref{alg-smagda}}, with $\tau_1\in (0, \frac{1}{p+\ell}]$ and $\beta \in (0,1]$.} For any $t \in \N$, we have:
{\small
\begin{align*}
\hat{f}(x_{t+1}, y_{t+1}; z_{t+1}) &- \mg{\hat f}(x_t, y_t; z_t) \\
\leq    & -\frac{\tau_1}2\|\nabla_x\hat{f}(x_t, y_t; z_t)\|^2 
            + \tau_2\left(1 + \frac{\ell}2 \tau_2\right) \|\nabla_y f(x_{t+1},y_t)\|^2 
            -\frac{p}{2\beta}\|z_t - z_{t+1}\|^2 \\
        &   + \tau_1((p+\mg{\ell})\tau_1 - 1) 
            \langle \Delta_t^x,~\nabla_x\hat{f}(x_t,y_t;z_t)\rangle  
            + \tau_2(1 + \ell\tau_2)\langle\Delta_t^y,~\nabla_yf(x_{t+1},y_t)\rangle \\
        &   + \frac{p+\ell}2 \tau_1^2 \|\Delta_t^x\|^2
            + \frac{\ell\tau_2^2}2 \|\Delta_t^y\|^2.
\end{align*}
}
\end{replemma}
\begin{proof}
\ly{Since, $\hat{f}(\cdot,y;z)$ is $(p+\ell)$-smooth, we have}
\begin{align*}
    \hat{f}(x_{t+1}, y_t; z_t) &- \hat{f}(x_t, y_t; z_t)\\
    & \leq \langle x_{t+1} - x_t, \nabla_x \hat{f}(x_t,y_t; z_t)\rangle + \frac{p+\ell}2 \|x_{t+1}-x_t\|^2 \\
    & = - \tau_1\langle\hat{G}_x(x_t,y_t; z_t), \nabla_x\hat{f}(x_t,y_t;z_t)\rangle + \frac{p+\ell}2\tau_1^2\|\hat{G}_x(x_t,y_t;z_t)\|^2. \\
    & = (\frac{p+\ell}{2} \tau_1^2 - \tau_1) \|\nabla_x\hat{f}(x_t,y_t;z_t)\|^2 - (\tau_1 + (p+\ell) \tau_1^2)  \langle\Delta_t^x, \nabla_x\hat{f}(x_t,y_t;z_t)\rangle \\
    & \qquad + \frac{p+\ell}{2} \|\Delta_t^x\|^2,
\end{align*}
\ly{
where last equality follows from 
$\hat{G}_x(x_t,y_t; z_t) = \nabla_x\hat{f}(x_t,y_t;z_t) + \Delta_t^x$.
Hence, for $\tau_1 \leq \frac{1}{p + \ell}$, we have
}
\begin{equation}
\sa{\begin{aligned}
\MoveEqLeft \hat{f}(x_{t+1}, y_t; z_t) -\hat{f}(x_t, y_t; z_t)\\
&\leq - \frac{\tau_1}{2}\|\nabla_x\hat{f}(x_t,y_t;z_t)\|^2 + \tau_1((p+\ell)\tau_1 - 1) \langle\Delta_t^x, \nabla_x\hat{f}(x_t,y_t;z_t)\rangle + \frac{p+\ell}2\tau_1^2\|\Delta_t^x\|^2.
\end{aligned}}%
\label{primal-first-ineq}
\end{equation}

\ly{Similarly, we observe that for all, $\nabla_y\hat{f}(x,y;z) = \nabla_yf(x,y)$, for all $x,y,z$, which together with the smoothness of $f(x, \cdot)$ implies}
\begin{equation}
\begin{aligned}
\MoveEqLeft \sa{\hat{f}(x_{t+1}, y_{t+1}; z_t)} - \sa{\hat{f}(x_{t+1}, y_t; z_t)} \\
&\leq  
    \langle \nabla_y\hat{f}(x_{t+1},y_t;z_t), ~ y_{t+1} - y_t \rangle 
    +
    \frac{\ell}2 \|y_{t+1}-y_t\|^2 \\
& \ly{=} \; \tau_2 \langle \nabla_yf(x_{t+1},y_t), G_y(x_{t+1},y_t,{\xi_{t+1}^y})\rangle + \frac{\ell}2 \tau_2^2 \|G_y(x_{t+1},y_t,{\xi_{t+1}^y})\|^2 \\
& \ly{=} \tau_2\left(1 + \frac{\ell}{2} \tau_2\right) \|\nabla_y f(x_{t+1},y_t)\|^2 + \tau_2(1 + \ell\tau_2)\langle \nabla_yf(x_{t+1},y_t), \Delta_t^y\rangle + \frac{\ell\tau_2^2}{2}\|\Delta_t^y\|^2,
\end{aligned}
\label{primal-second-ineq}
\end{equation}
\ly{where we used again the identity $G_y(x_{t+1},y_t,{\xi_{t+1}^y}) = \nabla_y \hat{f}(x_{t+1}, y_t; z_t) + \Delta_t^y$.}
%
%

\ly{Finally, we observe from the \smagda{} update rule $z_{t+1} - z_t = \beta(x_{t+1} - z_t)$ that $\frac1\beta (z_{t+1} - z_t) = x_{t+1} - z_t$ and $(1 - \beta)(\mg{x_{t+1}} - z_t) = (x_{t+1} - z_t) - (z_{t+1} - z_t) = x_{t+1} - z_{t+1}$. This gives}
\begin{equation}
\label{primal-third-ineq}
\begin{split}
\hat{f}(x_{t+1}, y_{t+1}; z_{t+1}) - \hat{f}(x_{t+1}, y_{t+1}; z_t)
&= \frac{p}2\left[ \|x_{t+1}-z_{t+1}\|^2 - \|x_{t+1} - z_t\|^2\right] \\
&= \frac{p}2\left[\|(1-\beta)(x_{t+1} - z_t)\|^2 - \frac{1}{\beta^2}\|z_{t+1}-z_t\|^2\right] \\
&= \frac{p}2 \left[\frac{(1-\beta)^2}{\beta^2}\|z_{t+1} - z_t\|^2 - \frac{1}{\beta^2}\|z_{t+1}-z_t\|^2\right] \\
&\leq \frac{-p}{2\beta} \|z_t - z_{t+1}\|^2,
\end{split}
\end{equation}
\mg{where we used $0<\beta\leq 1$}. 
Therefore, summing up \eqref{primal-first-ineq}, \eqref{primal-second-ineq}, \eqref{primal-third-ineq} yields the claim.
\end{proof}

\subsection{Proof of Lemma \ref{dual-descent}}
\label{sec:proof-dual-descent}
%
\begin{replemma}{dual-descent}
Suppose Assumptions \ref{assump-Lip-gradient}, \ref{assump-PL-cond} and \ref{assump-light-tail} hold, and $p\geq \ell$. 
\ly{Then, for any $t \in \N$,}
\begin{align*}
\Psi(y_{t+1}; z_{t+1}) - \Psi(y_t; z_t)
\geq 
&\; \tau_2 \langle \sa{\nabla_y f(x^*(y_t,z_t),y_t)},~\nabla_yf(x_{t+1},y_t)\rangle +
\tau_2\langle \sa{\nabla_y f(x^*(y_t,z_t),y_t)}, \Delta_t^y\rangle \\
& -\frac{L_\Psi}{2}\tau_2^2 
\left(
\| \nabla_yf(x_{t+1},y_t) \|^2 + 2\langle \nabla_yf(x_{t+1},y_t), \Delta_t^y\rangle + \|\Delta_t^y\|^2 
\right) \\
&+\frac{p}{2} 
\langle z_{t+1} - z_t, z_{t+1} + z_t - 2x^*(y_{t+1}, z_{t+1})\big\rangle,
\end{align*}
\mg{where $L_\Psi\triangleq\ell\left(1 + \sa{\frac{p+\ell}{p-\ell}}\right)$ and the map $x^*(\cdot,\cdot)$ is defined \sa{in} \eqref{def-xstar-yz}}.
\end{replemma}
\begin{proof}
\sa{By Lemma \ref{psi-smoothness}, $\Psi$ is  $L_\Psi$-smooth in $y$ for any given $z\in\R^{d_1}$.} 
Then, using $\nabla_y \Psi(y_t;z_t) = \sa{\nabla_y f(x^*(y_t,z_t),y_t)}$, 
we obtain
\begin{equation}
\begin{aligned}
\Psi(y_{t+1},z_t) - \Psi(y_t,z_t)
&\geq \langle \sa{\nabla_y f(x^*(y_t,z_t),y_t;z_t)},~y_{t+1}-y_t \rangle -\frac{L_\Psi}2 \|y_{t+1} - y_t\|^2 \\
&\geq 
\tau_2 \langle\sa{\nabla_y f(x^*(y_t,z_t),y_t)},~\nabla_yf(x_{t+1},y_t)\rangle 
+
\tau_2\langle \sa{\nabla_y f(x^*(y_t,z_t),y_t)},~\Delta_t^y\rangle \\
&\quad - \frac{L_\Psi}2 \tau_2^2 \left( 
\|\nabla_yf(x_{t+1},y_t)\|^2 + 2\langle \nabla_yf(x_{t+1},y_t), \Delta_t^y\rangle + \|\Delta_t^y\|^2
\right).
\end{aligned}
\label{psi-y-descent}
\end{equation}
\sa{Furthermore, by definition of $\Psi$, we also have}
\begin{align}
\Psi(y_{t+1};z_{t+1}) - \Psi(y_{t+1};z_t)
&=
\hat{f}(x^*(y_{t+1},z_{t+1}),y_{t+1};z_{t+1}) - \hat{f}(x^*(y_{t+1},z_t),y_{t+1};z_t) \nonumber \\
&\geq 
\hat{f}(x^*(y_{t+1},z_{t+1}),y_{t+1};z_{t+1}) - \hat{f}(x^*(y_{t+1},z_{t+1}),y_{t+1};z_t)\nonumber \\
&=
\frac{p}2 \left[\|z_{t+1} - x^*(y_{t+1},z_{t+1})\|^2 - \|z_t - x^*(y_{t+1},z_{t+1})\|^2 \right] \nonumber  \\
&=
\frac{p}2(z_{t+1}-z_t)^\top[z_{t+1} + z_t - 2x^*(y_{t+1},z_{t+1})].
\label{psi-z-descent}
\end{align}
Summing \eqref{psi-y-descent} and \eqref{psi-z-descent}, we conclude.
\end{proof}

\section{Proof of Theorem \ref{thm-ineq-lyapunov-descent}}\label{sec-proof-of-thm-ineq-lyapunov-descent}
Before we move on to the proof of Theorem \ref{thm-ineq-lyapunov-descent}, we first provide a result from \cite{zhang2020single,yan-high-proba} that quantifies the change in the function $P$ over the iterations. 
We also provide its proof for the sake of completeness.
\begin{lemma}[Lemma B.7 in \cite{zhang2020single}]
\label{proximal-descent}
Suppose Assumptions \ref{assump-Lip-gradient}, \ref{assump-PL-cond} and \ref{assump-light-tail} hold. 
\sa{Consider the \emph{\smagda{}} iterate sequence $(x_t, y_t, z_t)_{t\in\N}$ for $p>\ell$, and let $Y^*(z)\triangleq \argmax_y \Psi(y;z)$ denote the set of maximizers of $\Psi(\cdot,z)$ for given $z$. 
For any $t \in \N$ and $y^*(z_{t+1}) \in Y^*(z_{t+1})$, it holds that}
\[
P(z_{t+1}) - P(z_t) 
\leq 
\frac{p}2 \langle z_{t+1} - z_t,~z_{t+1} \sa{+} z_t - 2x^*(y^*(z_{t+1}),z_t) \rangle.
\]
\end{lemma}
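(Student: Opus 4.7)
The plan is to exploit the saddle-point structure of the auxiliary problem $\min_x\max_y \hat f(x,y;z)$ together with the special way the proximal center $z$ enters $\hat f$, namely only through the quadratic term $\frac{p}{2}\|x-z\|^2$, which yields a clean difference identity when we vary $z$.

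First, I would use the outer $\max_y$ representation of $P$. By definition, $P(z)=\max_y \Psi(y;z)$ where $\Psi(y;z)=\min_x \hat f(x,y;z)$. Pick any maximizer $y^*(z_{t+1})\in Y^*(z_{t+1})$, so that $P(z_{t+1})=\Psi(y^*(z_{t+1});z_{t+1})$. Since $y^*(z_{t+1})$ is only a feasible (not necessarily optimal) point for the outer problem at $z_t$, we have $P(z_t)\geq \Psi(y^*(z_{t+1});z_t)$, and therefore
\begin{equation*}
P(z_{t+1})-P(z_t)\;\leq\;\Psi(y^*(z_{t+1});z_{t+1})-\Psi(y^*(z_{t+1});z_t).
\end{equation*}

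Next, I would apply the same suboptimality trick in the inner $\min_x$. Write $\tilde x \triangleq x^*(y^*(z_{t+1}),z_t)$, the unique minimizer (unique because $p>\ell$ makes $\hat f(\cdot,y;z)$ strongly convex) of $\hat f(\cdot,y^*(z_{t+1});z_t)$. Then, by definition of $\Psi$,
\begin{equation*}
\Psi(y^*(z_{t+1});z_{t+1})\leq \hat f(\tilde x, y^*(z_{t+1});z_{t+1}),\qquad \Psi(y^*(z_{t+1});z_t)=\hat f(\tilde x, y^*(z_{t+1});z_t).
\end{equation*}
Subtracting, I obtain
\begin{equation*}
P(z_{t+1})-P(z_t)\leq \hat f(\tilde x,y^*(z_{t+1});z_{t+1})-\hat f(\tilde x,y^*(z_{t+1});z_t).
\end{equation*}

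Finally, since $f$ does not depend on $z$, the right-hand side collapses to a difference of quadratics:
\begin{equation*}
\hat f(\tilde x,y^*(z_{t+1});z_{t+1})-\hat f(\tilde x,y^*(z_{t+1});z_t)=\tfrac{p}{2}\big(\|\tilde x-z_{t+1}\|^2-\|\tilde x-z_t\|^2\big).
\end{equation*}
Applying the identity $\|a\|^2-\|b\|^2=\langle a-b,a+b\rangle$ with $a=\tilde x-z_{t+1}$ and $b=\tilde x-z_t$, the right-hand side becomes $\tfrac{p}{2}\langle z_{t+1}-z_t,\, z_{t+1}+z_t-2\tilde x\rangle$, giving the claim. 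The main step that does any real work is the double suboptimality argument (choosing $y^*(z_{t+1})$ in the outer $\max$ and $\tilde x=x^*(y^*(z_{t+1}),z_t)$ in the inner $\min$); once that is set up, the rest is bookkeeping on the $\frac{p}{2}\|x-z\|^2$ term.
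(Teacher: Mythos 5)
Your argument mirrors the paper's proof step by step: upper-bound the difference of $P$ by freezing $y$ at a maximizer of the $z_{t+1}$ dual problem, then freeze $x$ at $\tilde x = x^*(y^*(z_{t+1}),z_t)$, and finish with the difference-of-squares identity on the $\frac{p}{2}\|x-z\|^2$ term. The structure is right, and the final bookkeeping is correct.

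However, there is a genuine gap in your very first sentence: you assert ``by definition, $P(z)=\max_y\Psi(y;z)$.'' That is \emph{not} the definition. By \eqref{def-Psi-and-P}, $P(z)\triangleq\min_x\max_y\hat f(x,y;z)$ is the min-max value, whereas $\max_y\Psi(y;z)=\max_y\min_x\hat f(x,y;z)$ is the max-min value. The two coincide only when strong duality holds, and that is precisely the nontrivial step the paper pauses to justify: since $p>\ell$, $\hat f(\cdot,y;z)$ is $(p-\ell)$-strongly convex (hence satisfies the PL condition in $x$), and Assumption \ref{assump-PL-cond} gives the PL condition in $y$; the two-sided PL property then licenses the min-max interchange via \cite[Lemma 2.1]{yang2020global}. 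Without this observation, the inequality $P(z_t)\geq\Psi(y^*(z_{t+1});z_t)$ that you use has no basis, because you cannot relate the max-min quantity $\Psi(y^*(z_{t+1});z_t)$ to the min-max quantity $P(z_t)$ ``for free.'' The fix is short (invoke the double-sided PL strong duality explicitly), but as written your proof takes a false equality as a starting point.
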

\begin{proof}
\sa{Let $y^*(z_{t+1}) \in Y^*(z_{t+1})$ and $y^*(z_t) \in Y^*(z_t)$ be two arbitrary maximizers. We have}
\begin{align*}
P(z_{t+1}) - P(z_t)
&=\sa{\min_x\max_y\hat f(x,y;z_{t+1})-\min_x\max_y\hat f(x,y;z_{t})}\\
&=\sa{\max_y\min_x\hat f(x,y;z_{t+1})-\max_y\min_x\hat f(x,y;z_{t})}\\
&=\Psi(y^*(z_{t+1});z_{t+1}) - \Psi(y^*(z_t);z_t) \\
&\leq 
\Psi(y^*(z_{t+1});z_{t+1}) - \Psi(y^*(z_{t+1});z_t) \\
&=
\hat{f}(x^*(y^*(z_{t+1}),z_{t+1}),y^*(z_{t+1});z_{t+1}) - \hat{f}(x^*(y^*(z_{t+1});\mg{z_t})
,y^*(z_{t+1});z_t) \\
&\leq
\hat{f}(x^*(y^*(z_{t+1}),z_t),y^*(z_{t+1});z_{t+1}) - \hat{f}(x^*(y^*(z_{t+1}); \mg{z_t} 
),y^*(z_{t+1});z_t) \\
&=
\frac{p}2(z_{t+1} - z_t)^\top[z_{t+1} \sa{+} z_t - 2x^*(y^*(z_{t+1}),z_t)].
\end{align*}
\ly{
\sa{The first and the third equality above hold by the definition of $P(z)$ and $\Psi(y;z)$ functions; on the other hand, for the second equality, one needs strong duality to hold. This interchange is valid and can be} justified by the simple fact that $\hat{f}$ is strongly convex 
in $x$ (therefore, it satisfies the PL condition in $x$), and it also satisfies the PL condition 
in $y$ according to our assumption~\ref{assump-PL-cond}. \mg{\sa{It has been established in \cite[Lemma 2.1]{yang2020global} that the double-sided PL property allows one for the min-max switch.} 
}}
\end{proof}
Equipped with this lemma, the stage is set to prove \cref{thm-ineq-lyapunov-descent} from Section \ref{sec-main-high-proba}. \mgbis{We first restate an extended version of this theorem, where the constants $\{c_i\}_{i=1}^8$ are provided explicitly.}
\begin{reptheorem}{thm-ineq-lyapunov-descent}
\mg{Suppose Assumptions \ref{assump-Lip-gradient}, \ref{assump-PL-cond}, \ref{assump-light-tail} and \ref{assump:primal_function} hold.} Consider \sa{the \emph{\smagda{}} algorithm with parameters
$p>\ell$, $\beta \in (0,1]$, $\tau_1 \in (0, \frac{1}{p+\ell}]$ and 
$\tau_2>0$ 
chosen such that} 
\[
\begin{split}
    c_0 \triangleq -\tau_2^2\ell \nu + \tau_2\left(1 - \frac{\ell}2 \tau_2 - L_\Psi \tau_2 \right) \geq 0,\quad 
    c_0' \triangleq \sa{\frac{p}{3\beta}} -\left( \frac{2p^2}{p-\ell} + 48\beta\frac{p^3}{(p-\ell)^2}\right) \geq 0
\end{split}
\]
for some constant $\nu>0$, where $L_\Psi=\ell\left(1 + \sa{\frac{p+\ell}{p-\ell}}\right)$. Then,
\begin{align}
\hspace{-0.1in}
V_t - V_{t+1} \geq & c_1 \|\nabla_x\hat{f}(x_t,y_t;z_t)\|^2
                       + c_2  \|\nabla_y{f}(x^*(y_t,z_t),y_t)\|^2 
                       + c_3\ly{\|x_t - z_t\|^2} \nonumber \\                       
                    &+ c_4 \langle \nabla_x \hat{f}(x_t,y_t;z_t),  \Delta_t^x \rangle  
                        + \langle c_5 \nabla_y{f}(x_t,y_t) + c_6 \nabla_y{f}(x^*(y_t,z_t),y_t), \Delta_t^y\rangle \nonumber \\
                    &+ c_7 \|\Delta_t^x\|^2 
                     + c_8 \|\Delta_t^y\|^2, 
\end{align} 
where \sa{the coefficients $c_1$ to $c_8$ have the following forms:}
{\small
\begin{align*}
    c_1 &= 
                \frac{\tau_1}{2}   
                    - 2\left(\frac{1}{(p-\ell)^2} + \tau_1^2\right) \left(c_0 \ell^2 
                    + \frac{\ell}{\nu}+48\beta p \Big(\frac{p+\ell}{p-\ell}\Big)^2\ell^2\tau_2^2\right) - \left(\ly{c_0'\beta^2}
                    +\ly{\frac{\ell}{6}}(1 + \ell\tau_2+2L_\Psi\tau_2)\right) \tau_1^2,\\
    c_2 &= 
    \frac{c_0}{2} - \frac{24 \beta p}{(p-\ell)\mu} 
            \left( 
                1 + \sa{\tau_2\frac{2p\ell}{p-\ell}} 
            \right)^2 
    ,\quad 
    c_3 = 
    c_0'\beta^2/{2},\\
    c_4 &= -\left(\ly{192} \beta p 
            \Big(\frac{p+\ell}{p-\ell}\Big)^2
            \ell^2\tau_2^2 
            + \frac{4\ell}{\nu} 
            + 4 c_0 \ell^2
            + \ly{2 c_0' \beta^2}\right)\tau_1^2 
            - \Big((p+\ell)\tau_1 - 1\Big)\tau_1,\\
    c_5 &= -\tau_2(1 + \ell\tau_2 + 2L_\Psi\tau_2), \quad c_6 = 2\tau_2, \\
    c_7 &= 
    -\left(\ly{96}\beta p 
    \Big(\frac{p+\ell}{p-\ell}\Big)^2
    \ell^2 \tau_2^2 + \frac{2\ell}{\nu} + \frac{p+\ell}{2} +2 c_0 \ell^2 + \ly{\frac{\ell}{6}}(1 + \ell\tau_2+2L_\Psi\tau_2)  +c_0'\beta^2\right)\tau_1^2,\\
    c_8 &= 
    -\left(\ly{48}\beta p 
    \Big(\frac{p+\ell}{p-\ell}\Big)^2
     + \frac{\ell}{2} + \ly{L_\Psi} \sa{+}3\ell (1 + \ell\tau_2+2L_\Psi\tau_2)\right)\tau_2^2. 
\end{align*}}%
\mg{Furthermore, the \smagda{} parameters can be chosen such that $c_1,c_2,c_3>0$}.
\end{reptheorem}
\begin{proof}
\sa{
Combining the inequalities 
in \cref{primal-descent,dual-descent,proximal-descent} gives us a lower bound of the form:}
\begin{equation}
    V_t - V_{t+1} \geq A_1 + A_2 + A_3 + A_4 + A_5 + A_6,\\
\label{potential-lower-bound}
\end{equation}
\sa{for any $y^*(z_{t+1}) \in Y^*(z_{t+1})$ appearing in $A_3$,} where
\begin{align*}
    A_1 &= \frac{\tau_1}2\|\nabla_x\hat{f}(x_t, y_t; z_t)\|^2 + \tau_2\left(1 - \frac{\ell}2 \tau_2 - L_\Psi\tau_2\right) \|\nabla_y f(x_{t+1},y_t)\|^2 + \frac{p}{2\beta}\|z_t - z_{t+1}\|^2,\\   
    A_2 &= 2\tau_2\langle \sa{\nabla_y{f}(x^*(y_t,z_t),y_t)} - \nabla_yf(x_{t+1},y_t), ~\nabla_yf(x_{t+1},y_t)\rangle,  \\
    A_3 &= 2p \langle z_{t+1}-z_t,~x^*(y^*(z_{t+1}),z_t)-x^*(y_{t+1},z_{t+1})\rangle \\    
    A_4 &=  - \tau_1((p+\mg{\ell})\tau_1 - 1)\langle \nabla_x\hat{f}(x_t,y_t;z_t),~\Delta_t^x \rangle, \\
    A_5 &=  \langle ~ 2\tau_2\nabla_y{f}(x^*(y_t,z_t),y_t)-\tau_2(1 + \ell\tau_2+2L_\Psi\tau_2) \nabla_yf(x_{t+1},y_t), ~\Delta_t^y \rangle,  \\
    A_6 &= - \frac{p+\ell}2 \tau_1^2 \|\Delta_t^x\|^2 - \frac{\ell\tau_2^2}2 \|\Delta_t^y\|^2 - L_\Psi\tau_2^2\|\Delta_t^y\|^2.
\end{align*}
\mg{Next, we 
\sa{provide} lower bounds \sa{for {several terms in the above inequality, including $A_2$, $A_3$, $A_5$,  $\|\nabla_y f(x_{t+1},y_t)\|^2$ and $\|z_{t+1} - z_t\|^2$.} At the end, using these bounds within~\eqref{potential-lower-bound}, we will be able to establish a descent property for $\{V_t\}$, \mg{which will allow to apply our concentration result} (Theorem~\ref{theorem-meta}) for deriving the desired high probability bounds.}}
%
\myparagraph{Lower bound for $A_2$}
%
Using \sa{Cauchy-Schwarz} inequality and $\ell$-smoothness of $f$, we have 
\begin{align*}
A_2 & \geq -2\tau_2 \|\nabla_y{f}(x^*(y_t,z_t),y_t) - \nabla_yf(x_{t+1},y_t)\|\|\nabla_yf(x_{t+1},y_t)\| \nonumber \\
& \geq -2\tau_2\ell\|x_{t+1}-x^*(y_t,z_t)\|\|\nabla_yf(x_{t+1},y_t)\| \nonumber \\
& \geq -\tau_2^2\ell\nu
\|\nabla_yf(x_{t+1},y_t)\|^2 - \ell\nu^{-1} \|x_{t+1} - x^*(y_t,z_t)\|^2, \quad \forall \nu > 0,  \nonumber 
\end{align*}
\ly{where last line follows from Young's inequality.}
\ly{Thus, by \cref{subopt-xt+1}, we obtain for any $\nu>0$}
\begin{align}
    A_2 &  \geq -\tau_2^2\ell\nu \|\nabla_yf(x_{t+1},y_t)\|^2 - \frac{2\ell}{\nu} \left(1 + \frac{1}{\tau_1^2(p-\ell)^2} \right)\tau_1^2 \|\nabla_x\hat{f}(x_t,y_t;z_t)\|^2\nonumber \\
      &\quad - \frac{4\ell}{\nu} \tau_1^2\langle \nabla_x\hat{f}(x_t,y_t;z_t), \Delta_t^x\rangle - \frac{2\ell}{\nu} \tau_1^2\|\Delta_t^x\|^2. 
\label{A-lower-bound}
\end{align}
\myparagraph{Lower bound for $A_3$}
By Cauchy-Schwarz inequality and Lemma~\ref{xstar-lipschitz},
\begin{align}
A_3 &= \begin{aligned}[t]
            &2p \langle z_{t+1}-z_t,~x^*(y^*(z_{t+1}),z_t) - x^*(y^*(z_{t+1}),z_{t+1})\rangle \\
            &+ 2p \langle z_{t+1}-z_t,~x^*(y^*(z_{t+1}),z_{t+1}) -x^*(y_{t+1},z_{t+1})\rangle \nonumber \\    
        \end{aligned} \\
    &\geq \begin{aligned}[t]
            & -2p \|z_{t+1}-z_t\|\|x^*(y^*(z_{t+1}),z_t) - x^*(y^*(z_{t+1}),z_{t+1})\| \\
            & + 2p \langle z_{t+1}-z_t,~x^*(y^*(z_{t+1}),z_{t+1}) -x^*(y_{t+1},z_{t+1})\rangle \nonumber \\
          \end{aligned} \\
    &\geq -\frac{2p^2}{p-\ell} \|z_{t+1} - z_t\|^2  + 2p\langle z_{t+1} - z_t,~x^*(y^*(z_{t+1}),z_{t+1}) - x^*(y_{t+1},z_{t+1})\rangle. \nonumber 
\end{align}
Hence, using Young's inequality, 
\sa{for all $\beta>0$,} we obtain
\begin{equation} \label{B-initial-lower-bound}
A_3   \geq    -\left(\frac{p}{6\beta} + \frac{2p^2}{p-\ell} \right) \|z_{t+1}-z_t\|^2 
            - 6\beta p \|x^*(y^*(z_{t+1}),z_{t+1}) - x^*(y_{t+1},z_{t+1})\|^2.
\end{equation}
\ly{We now lower bound the second term on the right-hand side 
\sa{of~\eqref{B-initial-lower-bound}}. First, note that we have $x^*(y^*(z_{t+1}),z_{t+1}) = x^*(z_{t+1})$, \sa{which follows from the fact that $\min_x\max_y \hat f(x,y;z)=\max_y\min_x\hat f(x,y;z)$ for all $z$ since $\hat f$ is strongly convex in $x$ and satisfies the PL condition in~\cref{assump-PL-cond}.} 
Hence, using \sa{the 
inequality $\norm{\sum_{i=1}^Nw_i}^2\leq N\sum_{i=1}^N\norm{w_i}^2$, which holds for any $\{w_i\}\in\R^{d_1}$ and \mg{$N\geq 1$}, we get}}
\begin{align*}
    \MoveEqLeft \|x^*(y^*(z_{t+1}),z_{t+1}) - x^*(y_{t+1},z_{t+1})\|^2 \\
        & \quad \quad \leq  
                \begin{aligned}[t]
                    & 4\|x^*(z_{t+1}) - x^*(z_t) \|^2  +  4\|x^*(z_t) - x^*(y_t^+(z_t),z_t)\|^2 \\
                    +&  4\|x^*(y_t^+(z_t),z_t) - x^*(y_{t+1},z_t)\|^2 +  4\|x^*(y_{t+1},z_t) - x^*(y_{t+1},z_{t+1})\|^2.  
                \end{aligned}
\end{align*}
By Lemma~\ref{xstar-lipschitz} and Lemma~\ref{xstar-z-lipschitz}, we observe that
\[
    4\|x^*(z_{t+1}) - x^*(z_t) \|^2 + 4\|x^*(y_{t+1},z_t) - x^*(y_{t+1},z_{t+1})\|^2 \leq \frac{8p^2}{(p-\ell)^2} \|z_{t+1}-z_t\|^2. \\
\]
Using Lemma~\ref{subopt-proximal-after-dual}, we also have
\[
4\|x^*(z_t) - x^*(y_t^+(z_t),z_t)\|^2 
    \leq \frac{4}{(p-\ell)\mu} \left(1 + 
    \sa{\tau_2\frac{2p\ell}{p-\ell}}
    \right)^2 \|\sa{\nabla_y{f}(x^*(y_t,z_t),y_t)}\|^2. 
\]
Finally, \sa{using Lemma~\ref{xstar-y-smoothness}, we get}
\begin{align*}
    \MoveEqLeft 4\|x^*(y_t^+(z_t),z_t) - x^*(y_{t+1},z_t)\|^2 \\
       & \begin{aligned}[t]
            &\leq \sa{4\Big(\frac{p+\ell}{p-\ell}\Big)^2} \|y^+_t(z_t)-y_{t+1}\|^2 \\
            & = \sa{4\Big(\frac{p+\ell}{p-\ell}\Big)^2} \tau_2^2 \|\sa{\nabla_y {f}(x^*(y_t,z_t),y_t)} - G_y(x_{t+1},y_t,{\xi_{t+1}^y})\|^2 \\
            & = \sa{4\Big(\frac{p+\ell}{p-\ell}\Big)^2} \tau_2^2 \|\sa{\nabla_y {f}(x^*(y_t,z_t),y_t)} - 
            (\nabla_y f(x_{t+1}, y_t) + \Delta_t^y) \|^2,  \\
            &\leq \sa{4\Big(\frac{p+\ell}{p-\ell}\Big)^2} \tau_2^2 \left( 2\ell^2\|x^*(y_t,z_t) - x_{t+1}\|^2 + 2\|\Delta_t^y\|^2\right),
        \end{aligned}
\end{align*}
\ly{where the last inequality stems from the $\ell$-smoothness of $f$.}
\ly{In view of Lemma~\ref{subopt-xt+1}, we may further \mg{upper} 
bound the above quantity as \sa{follows:}}
{\small
\begin{align*}
    \MoveEqLeft 4\|x^*(y_t^+(z_t),z_t) - x^*(y_{t+1},z_t)\|^2 \\
    &\leq \begin{aligned}[t]
                & \sa{16\Big(\frac{p+\ell}{p-\ell}\Big)^2}\ell^2\tau_1^2\tau_2^2 
                \Big[\Big(\frac{1}{\tau_1^2(p-\ell)^2} + 1 \Big) \|\nabla_x\hat{f}(x_t,y_t;z_t)\|^2
                    + 2\langle \nabla_x\hat{f}(x_t,y_t;z_t),\Delta_t^x \rangle
                    + \|\Delta_t^x\|^2
                \Big]
                \\
                & + \sa{8\Big(\frac{p+\ell}{p-\ell}\Big)^2} \tau_2^2 \|\Delta_t^y\|^2.
        \end{aligned}    
\end{align*}}%
\ly{Summing up the three intermediate inequalities we established above, we obtain}
\[
\begin{split}
    &\|x^*(y^*(z_{t+1}),z_{t+1}) - x^*(y_{t+1},z_{t+1})\|^2 \\
    &\quad \leq \begin{aligned}[t]
                    & \frac{8p^2}{(p-\ell)^2} \|z_{t+1}-z_t\|^2 
                        + \frac{4}{(p-\ell)\mu} 
                            \left( 1 + \tau_2\ell + \frac{\tau_2\ell(p+\ell)}{p-\ell} \right)^2
                            \|\sa{\nabla_y{f}(x^*(y_t,z_t),y_t)}\|^2 \\
                    & + \sa{16\Big(\frac{p+\ell}{p-\ell}\Big)^2}\ell^2
                            \left( \frac{1}{\tau_1^2(p-\ell)^2} + 1 \right)\tau_1^2\tau_2^2 \|\nabla_x\hat{f}(x_t,y_t;z_t)\|^2 \\
                            & +
                            \sa{32\Big(\frac{p+\ell}{p-\ell}\Big)^2}\ell^2 \tau_1^2\tau_2^2 \langle \nabla_x\hat{f}(x_t,y_t;z_t),\Delta_t^x\rangle \\
                            & +\sa{16\Big(\frac{p+\ell}{p-\ell}\Big)^2}\ell^2 \tau_1^2\tau_2^2 \|\Delta_t^x\|^2 
                            +
                            \ly{8}\frac{(p+\ell)\mgbis{^2}}{(p-\ell)\mgbis{^2}}\tau_2^2 \|\Delta_t^y\|^2.\\
                \end{aligned}    
\end{split}
\]

In conclusion for $A_3$, \ly{using}~\eqref{B-initial-lower-bound} and the above \mg{inequality}, 
we obtain after some rearrangement
\begin{equation}\label{B-final-bound}
    \begin{aligned}
    A_3 \geq 
            &-\left(\frac{p}{6\beta} + \frac{2p^2}{p-\ell} + 48\beta\frac{p^3}{(p-\ell)^2}\right)\|z_{t+1}-z_t\|^2 -\left(\sa{96\beta p\Big(\frac{p+\ell}{p-\ell}\Big)^2}\ell^2\tau_1^2\tau_2^2\right) \|\Delta_t^x\|^2 \\
            & - \left(\frac{24\beta p}{(p-\ell)\mu} 
                \left( 
                    1 + \sa{\tau_2\frac{2p\ell}{p-\ell}}
                \right)^2 \right)
                \|\sa{\nabla_y{f}(x^*(y_t,z_t),y_t)}\|^2  \\
            &-\left[\sa{96\beta p\Big(\frac{p+\ell}{p-\ell}\Big)^2}\ell^2 
                \left( \frac{1}{\tau_1^2(p-\ell)^2} + 1 \right)\tau_1^2\tau_2^2\right] \|\nabla_x\hat{f}(x_t,y_t;z_t)\|^2  \\
            &-\left(\sa{192\beta p\Big(\frac{p+\ell}{p-\ell}\Big)^2}\ell^2\tau_1^2\tau_2^2\right) \langle \nabla_x\hat{f}(x_t,y_t;z_t),\Delta_t^x\rangle -\left(\sa{48\beta p\Big(\frac{p+\ell}{p-\ell}\Big)^2} \tau_2^2 \right)\|\Delta_t^y\|^2. \\
        \end{aligned}
\end{equation}
%
\myparagraph{Lower bound for $A_5$}
%
\sa{Below we first bound $\langle \nabla_y f(x_{t+1},y_t), \Delta_t^y\rangle$, i.e.,}
\small{
\begin{equation*}
\begin{aligned}
\langle \nabla_y f(x_{t+1},y_t), \Delta_t^y\rangle 
    &=
        \langle \nabla_y f(x_t,y_t), \Delta_t^y \rangle 
        +
        \langle \nabla_yf(x_{t+1},y_t) - \nabla_y f(x_t,y_t) , \Delta_t^y\rangle \\
    &\leq 
        \langle \nabla_y f(x_t,y_t), \Delta_t^y \rangle 
        +
        \frac{1}{12\tau_2 \ell} \|\nabla_yf(x_{t+1},y_t) - \nabla_y f(x_t,y_t)\|^2 +{3\tau_2 \ell} \|\Delta_t^y\|^2 \\
    &\leq 
        \langle \nabla_y f(x_t,y_t), \Delta_t^y \rangle 
        +
        \sa{\frac{\ell\tau_1^2}{12\tau_2}} \| 
        \sa{\hat G_x(x_t,y_t,{\xi_{t+1}^x};z_t)}\|^2 
        + {3\tau_2 \ell} \|\Delta_t^y\|^2 \\
    &\leq 
        \langle \nabla_y f(x_t,y_t), \Delta_t^y \rangle 
        + \ly{\frac{\ell \tau_1^2}{6\tau_2}} \|\nabla_x \hat{f}(x_t,y_t;z_t)\|^2 
        + \ly{\frac{\ell \tau_1^2}{6\tau_2}} \|\Delta_t^x\|^2 
        + \ly{3\tau_2 \ell} \|\Delta_t^y\|^2,
\end{aligned}
\end{equation*}}
\sa{where the first inequality follows from Young's inequality, the second inequality follows from $\nabla_y f$ being $\ell$-smooth and the update rule $x_{t+1}=x_t-\tau_1\hat G_x(x_t,y_t,\xi^x_{t+1};z_t)$, and in the third inequality we use $\Delta^x_t=\hat G_x(x_t,y_t,\xi^x_{t+1};z_t)-\nabla_x \hat f(x_t,y_t;z_t)$.}
Thus, since $-\tau_2(1 + \ell\tau_2 + \mg{2L_\Psi\tau_2}) 
 < 0$, for any $\tau_2 > 0$, we obtain
\begin{equation}\label{A5-lower-bound}
    A_5 \geq 
        \begin{aligned}[t]
            & -\tau_2(1 + \ell\tau_2+2L_\Psi\tau_2)\Big( \ly{\frac{\ell \tau_1^2}{6\tau_2}} \|\nabla_x \hat{f}(x_t,y_t;z_t)\|^2+ \ly{\frac{\ell \tau_1^2}{6\tau_2}} \|\Delta_t^x\|^2+\ly{3\tau_2 \ell} \|\Delta_t^y\|^2\Big)\\
            & +\langle ~ 2\tau_2\nabla_y{f}(x^*(y_t,z_t),y_t) -\tau_2 (1 + \ell\tau_2+2L_\Psi\tau_2)\nabla_y f(x_t,y_t), ~\Delta_t^y \rangle.
        \end{aligned}
\end{equation}
%
\myparagraph{Lower bound for $\|\nabla_y f(x_{t+1},y_t)\|^2$}
%
Using Lemma~\ref{subopt-xt+1} we can lower bound $\|\nabla_y f(x_{t+1},y_t)\|^2$ as follows:
\begin{align*}
    \|\nabla_y f(x_{t+1},y_t)\|^2
    &\geq 
    \frac12 \|\nabla_yf(x^*(y_t,z_t),y_t)\|^2 - \|\nabla_y f(x_{t+1},y_t) - \nabla_yf(x^*(y_t,z_t),y_t)\|^2  \\
    &\geq
    \frac12 \|\nabla_yf(x^*(y_t,z_t),y_t)\|^2 - \ell^2\|x_{t+1} - x^*(y_t,z_t)\|^2  \\
    &\geq
    \frac12\|\nabla_y f(x^*(y_t,z_t),y_t)\|^2 \\ 
    &\quad - \sa{2\ell^2\tau_1^2 
    \left[ 
    \left(\frac{1}{\tau_1^2(p-\ell)^2} + 1\right)\|\nabla_x\hat{f}(x_t,y_t;z_t)\|^2
    +
    2\langle \nabla_x\hat{f}(x_t,y_t;z_t), \Delta_t^x\rangle +
    \|\Delta_t^x\|^2
    \right]}.
\end{align*}

We observe that $\|\nabla_y f(x_{t+1},y_t)\|^2$ appears in both $A_1$ and \sa{the} lower bound given in~\eqref{A-lower-bound} \sa{for} $A_2$; \sa{hence,  
grouping the two terms together and using the definition of $c_0\geq 0$, we get}
\sa{
\begin{equation}
\begin{aligned}
    \MoveEqLeft 
    \left(\tau_2 \left(1 - \frac{\ell}2 \tau_2 - L_\Psi\tau_2\right) - \tau_2^2 \ell \nu\right)
        \|\nabla_y f(x_{t+1},y_t)\|^2 
            = c_0 \|\nabla_y f(x_{t+1},y_t)\|^2 \\
    &\geq\frac{c_0}{2}\|\nabla_y f(x^*(y_t,z_t),y_t)\|^2-2c_0\ell^2 
    \left(\frac{1}{(p-\ell)^2} + \tau_1^2\right)\|\nabla_x\hat{f}(x_t,y_t;z_t)\|^2 \\ 
    &\quad - 2c_0\ell^2\tau_1^2 
    \Big(2\langle \nabla_x\hat{f}(x_t,y_t;z_t), \Delta_t^x\rangle +
    \|\Delta_t^x\|^2
    \Big).
\end{aligned}
\label{eq:lower_bound_nabla_y_next}
\end{equation}}%
%
\myparagraph{Lower bound for $\|z_{t+1} - z_t\|^2$}
%
\sa{Since $z_{t+1}=z_t+\beta (x_{t+1}-z_t)$ for some $\beta>0$,} we observe that:
\begin{align*}
    \|z_{t+1}-z_t\|^2
    &= \beta^2 \|x_{t+1} - z_t\|^2 \\
    &= \beta^2 \|x_t - z_t - \tau_1 \hat{G}_x(x_t,y_t,\mg{{\xi_{t+1}^x}};z_t)\|^2 \\
    &\geq \beta^2 \left(\frac12 \|x_t - z_t\|^2 - \tau_1^2\|\mg{\hat{G}_x}(x_t,y_t,\mg{{\xi_{t+1}^x}};z_t)\|^2 \right),
\end{align*}
and since $\mg{\hat{G}_x}(x_t,y_t,\mg{{\xi_{t+1}^x}};z_t) = \nabla_x \hat{f}(x_t,y_t;z_t) + \Delta_t^x$, we obtain
\begin{equation*}
    \|z_t - z_{t+1}\|^2
    \geq \frac{\beta^2}{2}\|x_t - z_t\|^2 - \beta^2\tau_1^2\|\nabla_x\hat{f}(x_t,y_t;z_t)\|^2  
        - 2\beta^2 \tau_1^2\langle \nabla_x\hat{f}(x_t,y_t;z_t), \Delta_t^x\rangle -\beta^2\tau_1^2\|\Delta_t^x\|^2.
\end{equation*}
\sa{Note that $\|z_{t+1} - z_t\|^2$ appears in both $A_1$ and the lower bound given in~\eqref{B-final-bound} for $A_3$; hence, grouping the two terms together and using the definition of $c'_0\geq 0$ 
Using~\eqref{eq:z_t_z_t_next_bound}, we obtain}
\begin{equation}
    \begin{aligned}[t]
    \MoveEqLeft \left(\frac{p}{2\beta} -\frac{p}{6\beta} - \frac{2p^2}{p-\ell} - 48\beta\frac{p^3}{(p-\ell)^2}\right)  
        \|z_{t+1} - z_t\|^2 = c_0' \|z_{t+1} - z_t\|^2 \nonumber \\
        &\geq 
                    - c_0' \beta^2 \tau_1^2 \|\nabla_x\hat{f}(x_t,y_t;z_t)\|^2 + c_0' \frac{\beta^2}{2} \|x_t - z_t\|^2 - 2c_0'\beta^2 \tau_1^2  \langle \nabla_x\hat{f}(x_t,y_t;z_t), \Delta_t^x\rangle -c_0'\beta^2\tau_1^2  \|\Delta_t^x\|^2.
              \end{aligned}            
\label{eq:z_t_z_t_next_bound}
\end{equation}
\mg{We conclude by combining all these 
lower bounds}, i.e., 
\mg{the claimed Lyapunov descent inequality} follows directly from summing~\eqref{potential-lower-bound},~\eqref{A-lower-bound},~\eqref{B-final-bound},~\eqref{A5-lower-bound}, ~\eqref{eq:lower_bound_nabla_y_next} and~\eqref{eq:z_t_z_t_next_bound}. \mg{Finally, it follows after straightforward computations that there exist choice of \smagda{} parameters which yield $c_1,c_2,c_3>0$ while satisfying the conditions $c_0\geq 0$ and $c_0'\geq 0$; in fact Corollary \ref{coro-Lyap-descent} provides such \smagda{} parameters explicitly. This completes the proof.}
\end{proof}

\section{Proof of Corollary \ref{coro-Lyap-descent}}\label{sec-coro-Lyap-descent}
\ly{The lower bound provided in Theorem~\ref{thm-ineq-lyapunov-descent} resembles the descent property we require for our concentration result in Theorem~\ref{theorem-meta}. To allow for its proper application, we develop a stepsize policy inspired \sa{by}~\cite{yang2022faster}.} 

%
%
\begin{repcorollary}{coro-Lyap-descent}
\mg{\sa{Under the premise} of \cref{thm-ineq-lyapunov-descent}}, consider 
the 
parameters
$p=2\ell$, $\tau_1 \in (0, \frac{1}{3\ell}]$, $\tau_2
=
\frac{\tau_1}{48}, \beta = \sa{\alpha}\mu\tau_2$ 
\na{for any $\alpha\in(0,\frac{1}{406}]$}. Then, \sa{$\frac{\tilde A_{t+1} - \tilde A_t}{\tau_1} \leq -\tilde B_t + \tilde C_{t+1} + \tilde D_{t+1}$} for all $t\in\N$, where $\nu = \frac{12}{\tau_1\ell}$ and
{\small
\begin{equation*}
\begin{aligned}
\tilde A_t &\triangleq \sa{\tau_1 V_t,} 
    \quad   \tilde B_t \triangleq  \frac{\tau_1}{\ly{5}}\|\nabla_x \hat{f}(x_t,y_t;z_t)\|^2 
                                + \frac{\tau_2}{8} \|\sa{\nabla_y {f}(x^*(y_t,z_t),y_t)}\|^2 
                                + \frac{\beta p}{8}\|x_t - z_t\|^2 \\
            \tilde C_{t+1} &\triangleq 
            \Big[\Big(\ly{192} \beta p 
            \Big(\frac{p+\ell}{p-\ell}\Big)^2
            \ell^2\tau_2^2 
            + \frac{4\ell}{\nu} 
            + 4 c_0 \ell^2
            + \ly{2 c_0' \beta^2}\Big)\tau_1^2 
            + \Big((p+\ell)\tau_1 - 1\Big)\tau_1\Big]
                    \langle \nabla_x \hat{f}(x_t,y_t;z_t),  \Delta_t^x \rangle \\
                    &\quad \sa{+\tau_2\langle \left(1 + \ell\tau_2 + 2L_\Psi \tau_2 \right)\nabla_y f(x_t,y_t)-2\sa{\nabla_y {f}(x^*(y_t,z_t),y_t)},\Delta_t^y \rangle}, \\
            \tilde D_{t+1} &\triangleq \ly{2\ell \tau_1^2}\|\Delta_t^x\|^2 + \ly{8 \ell \tau_2^2} \|\Delta_t^y\|^2.
\end{aligned}
\end{equation*}}%
\end{repcorollary}
\begin{proof}
In view of~\cref{thm-ineq-lyapunov-descent}, it suffices to prove that setting $p=2\ell$, $\tau_1 \in (0, \frac{1}{3\ell}]$, $\tau_2 =\frac{\tau_1}{48}, \beta = \alpha\mu\tau_2$ for 
\na{for some positive $\alpha\leq \frac{1}{406}$} \mg{and $\nu = \frac{12}{\tau_1\ell}$} \sa{leads to both $c_0\geq 0$ and $c_0'\geq 0$; furthermore, we also need to show that this choice of parameters} implies the following lower bounds:
\begin{table}[h!]
    \centering
    \begin{tabular}{lllll}
        $(i)$ $c_1 \geq \ly{\frac{\tau_1}{5}}$, & $(ii)$ $c_2 \geq \mgbis{\frac{\tau_2}{8}}$, & $(iii)$ $c_3 \geq \ly{\frac{p\beta}{8}}$, &
        $(iv)$ $c_7 \geq -2 \ell \tau_1^2$, & $(v)$ $c_8 \geq \ly{-8} \ell \tau_2^2$.
    \end{tabular}
\end{table}

\sa{First, we show that our parameter choice implies that $c_0,c_0'\geq 0$.} Noting that $\ly{L_\Psi = 4\ell}$, \sa{using $\tau_1\leq \frac{1}{3\ell}$}, we may bound $c_0$ \sa{from above and below as follows:} 
\begin{equation}
\begin{aligned}
    \tau_2  \geq c_0 
    &\triangleq -\tau_2^2\ell \nu + \tau_2\left(1 - \frac{\ell}2 \tau_2 - L_\Psi \tau_2 \right)\\  
    &=\sa{\tau_2\left(1-\Big(\frac{\ell}{2}+L_\Psi+\ell \nu\Big)\frac{\tau_1}{48}\right)}\geq \tau_2 \left(1 - \frac{9}{2 \cdot 144} \mgbis{-\frac{1}{4}} \right) \geq \mgbis{\frac{1}{2}}
            \tau_2 \geq 0. 
\end{aligned}
\label{ineq-B1-term}
\end{equation}
%
\sa{Moreover, since $\beta=\alpha \mu\tau_2$ \na{and $\tau_2\leq \frac{1}{144\ell}$, we get $\beta\leq\frac{\alpha}{144}$ using $\kappa\geq 1$. Hence,} for 
\na{$\alpha\leq \frac{1}{406}$},} 
\begin{equation}
\label{eq:c0p-bound}
\begin{split}
    \sa{0\leq\frac{2p^2}{{p-\ell}} + 48 \beta \frac{p^3}{(p-\ell)^2}=8\ell(1+48\beta)=\na{\Big(\frac{\alpha}{36}(1+{\alpha}/{3})\Big)}\frac{p}{\beta}\leq\frac{p}{12\beta}},
\end{split}
\end{equation}%
\sa{where the last inequality follows from $
\na{\alpha\leq \frac{1}{406}}\leq 
\na{\frac{3}{2}(\sqrt{5}-1)}$; therefore, $c_0' \in [\frac{p}{4\beta}, \frac{p}{3\beta}]$.}

\sa{Next, we prove bounds on $c_1,c_2,c_3,c_7,c_8$ separately.}
\mg{\myparagraph{Proof of  part $(i)$}}
%
%
Since $p = 2\ell$, $\nu=\frac{12}{\tau_1 \ell}$ and $\tau_1 \mg{\leq} \frac{1}{3\ell}$, we first observe that
\begin{equation}
\label{eq:lower_bound_c1_1}
    \frac{2\ell}{\nu}\left(\frac{1}{(p-\ell)^2}+\tau_1^2\right) 
        = \mgbis{\frac{\tau_1}{6} (\tau_1^2\ell^2 + 1)}
        \leq \mgbis{\frac{\tau_1}{6}\left( \frac19 + 1 \right)}
        = 
        \sa{\frac{5}{27}\tau_1.}
\end{equation}
Furthermore, using $\tau_2 = \frac{\tau_1}{48}$, and \sa{$\beta=\alpha\mu\tau_2$}, we obtain
\begin{align}
96\beta p \Big(\frac{p+\ell}{p-\ell}\Big)^2\ell^2\tau_2^2\left(\frac{1}{(p-\ell)^2}+\tau_1^2\right)
        &= \left[\ly{96}  \beta \ell^2 p\frac{(p+\ell)^{\ly{2}}}{(p-\ell)^{\ly{4}}} \Big(1 + \tau_1^2(p-\ell)^2\Big) \frac{\tau_2^2}{\tau_1}
                \right] \tau_1 \nonumber \\
        &\leq \left[\ly{96} \cdot \sa{\alpha}\tau_2\mu \cdot \ly{18} \ell \left(1 + \frac{1}{9}\right)  \frac{\tau_2^2}{\tau_1}
                \right] \tau_1 \nonumber \\   
        &\leq 
        \sa{1920}\cdot\sa{\alpha}\cdot \frac{\tau_2^3 \ell^2}{\tau_1} \cdot \frac{\mu}{\ell} \cdot \tau_1 \nonumber \\
        &\leq 
        \sa{\frac{5\alpha}{2592}\tau_1},
\label{eq:lower_bound_c1_2}
\end{align}
where last line follows from the fact that $\mu/\ell \leq 1$ and 
\sa{$\frac{\tau_2^3\ell^2}{\tau_1}=\frac{\tau_1^2\ell^2}{48^3}\leq \frac{1}{144^2\cdot 48}$, in which we used $\tau_1\leq \frac{1}{3\ell}$.}

\sa{Since $\tau_2\geq c_0\geq 0$ and $-2\ell^2 \left(\frac{1}{(p-\ell)^2} + \tau_1^2\right) \geq -\frac{20}{9}$, using $\tau_2=\frac{\tau_1}{48}$,} we obtain
\begin{equation}
\label{eq:lower_bound_c1_3}
    \sa{-2c_0\ell^2\left(\frac{1}{(p-\ell)^2}+\tau_1^2\right)}
    \geq -\frac{20\tau_2}{9} = -\sa{\frac{5}{108}} \tau_1.
\end{equation}
\ly{Using $L_\Psi = 4 \ell$, \sa{$\tau_1\leq \frac{1}{3\ell}$} and $\tau_2 \leq \frac{1}{144 \ell}$, we get} 
\begin{equation}\label{eq:further_bound_common_term_cis}
    \sa{{\frac{\ell}{6}}(1 + \ell\tau_2+2L_\Psi\tau_2) \tau_1^2}
        \leq \frac{\ell}{6}\left(1 + \frac1{144} + \frac{8}{144} \right) \tau_1^2
        \leq \frac{1}{18}\left(1 + \frac1{144} + \frac{8}{144} \right) \tau_1=\sa{\frac{17}{288}\tau_1}.
\end{equation}
%

Using $\tau_1 \leq \frac{1}{3\ell}$, $\tau_2 \leq \frac{1}{144\ell}$, this implies
\begin{equation}\label{eq:bound_c0_prime}
    -c_0' \tau_1^2 \beta^2 \geq \sa{- {\frac{p}{3}} \beta \tau_1^2 \geq - \frac{2\ell}{3}\cdot \frac{\alpha\mu}{144\ell}\frac{1}{3\ell} \tau_1 \geq - \frac{\alpha}{648} \tau_1.} 
\end{equation}
Combining equations~\eqref{eq:lower_bound_c1_1},~\eqref{eq:lower_bound_c1_2}, \eqref{eq:lower_bound_c1_3}, \eqref{eq:further_bound_common_term_cis} and~\eqref{eq:bound_c0_prime}, we obtain
\begin{equation}
\label{eq:lower_bound_c1_final}
\begin{aligned}
    c_1 &\geq \left(\frac{1}{2} - \sa{\frac{5}{27}} 
    \sa{-\frac{5\alpha}{2592}
    -\frac{5}{108}} 
    \sa{-\frac{17}{288}-\frac{\alpha}{648}}
    \right) \tau_1= \Big(\frac{1}{2}-\frac{25}{108}-\frac{17+\alpha}{288}\Big)\tau_1\\
    &=\sa{\frac{181-3\alpha}{864}\tau_1\geq\frac{60-\alpha}{288}\geq \ly{\frac{\tau_1}{5}}},
    \end{aligned}
\end{equation}
\na{for $\alpha 
\leq \frac{1}{406}<2.4$,} \mg{which completes the proof of part $(i)$}.
%
%
\mg{\myparagraph{Proof of part $(ii)$}}
%
%
\mg{Due to our parameter choice}, we first note that
\begin{align}
    \frac{24 \beta p}{(p-\ell)\mu} 
            \left( 
                1 + \sa{\tau_2\frac{2p\ell}{p-\ell}} 
            \right)^2 
        = 48\alpha
            \left( 
                1 + \sa{\tau_2\frac{2p\ell}{p-\ell}} 
            \right)^2  \tau_2 \leq \sa{96\alpha\tau_2} \leq \frac{\tau_2}{8},
\end{align}
where last line follows from 
\na{$\tau_2\leq\frac{1}{144\ell}$ and $\alpha\leq\frac{1}{406}$.}
Finally, \sa{\eqref{ineq-B1-term} implies that $\frac{c_0}{2}\geq\frac{\tau_2}{4}$; hence,} $\sa{c_2} \geq \left(\mgbis{\frac{1}{4}}
- \frac{1}{8}\right) \tau_2 = 
\mgbis{\frac{\tau_2}{8}}
$.
%
%
\mg{\myparagraph{Proof of part $(iii)$}}
%
%
\na{We have shown that $c_0' \geq \frac{p}{4\beta}$;} hence, $c_3 \geq \frac{p\beta}{8}$.
%
%
\mg{\myparagraph{Proof of  part $(iv)$}}
%
%
\sa{According to~\eqref{ineq-B1-term}, $\tau_2\geq c_0$; hence, we deduce that}
\begin{equation}\label{eq:lower_bound_c7_1}
    -2c_0 \ell^2 \geq -2 \tau_2 \ell^2 \geq -\frac{1}{\sa{72}}\ell,
\end{equation}
where the last inequality follows from 
$\tau_2 \leq \frac{1}{144 \ell}$. Furthermore, we note that
\begin{equation}\label{eq:lower_bound_c7_2}
    -\ly{96}\beta p 
    \Big(\frac{p+\ell}{p-\ell}\Big)^2
    \ell^2 \tau_2^2
    =-
    \sa{1728\alpha}\tau_2^3 \ell^3 \mu \geq -
    \sa{\frac{\alpha}{12^3}}\ell, 
\end{equation}
with the last inequality following also from $\frac{\mu}{\ell} \leq 1$ \sa{and $\tau_2\leq\frac{1}{144\ell}$}.
We also note that 
\[
    - \frac{2\ell}{\nu} - \frac{p+\ell}{2}
        = -\left(\frac{\ell\tau_1}{6}+\frac{3}{2} \right)\ell
        \geq -\sa{\frac{14}{9}} \ell.
\]
Finally, \sa{since $c_0'\leq\frac{p}{3\beta}$ according to \eqref{eq:c0p-bound}, we get}
\begin{equation}
\label{eq:c7-final-bound}
    -c_0'\beta^2 \geq -\beta^2 \frac{p}{\sa{3}\beta} = \sa{-\frac{2}{3} \alpha\mu\tau_2\ell \geq -\frac{\alpha}{216} \ell,}
\end{equation}
\sa{where we used $\tau_2\leq\frac{1}{144\ell}$. Thus, since \na{$\alpha\in(0,1)$},} it follows from~\eqref{eq:lower_bound_c7_1},~\eqref{eq:lower_bound_c7_2},~\eqref{eq:c7-final-bound} and~\eqref{eq:further_bound_common_term_cis} that 
\[
    c_7 \geq -\ell\tau_1^2
    \left(\frac{1}{\sa{72}} + \frac{\alpha}{12^3} + \frac{14}{9} + \frac{\alpha}{216} + \sa{\frac{17}{96}} \right) \geq -2 \ell \tau_1^2.
\]
%
\mg{\myparagraph{Proof of  part $(v)$}}
For $c_8$, we may simply observe that
\begin{align*}
    c_8 &= -\left({48}\beta p 
    \Big(\frac{p+\ell}{p-\ell}\Big)^2+\frac{\ell}{2}+L_{\Psi}+3\ell(1 + \ell\tau_2+2L_\Psi\tau_2)\right)\tau_2^2\\
    &=- \left(864\alpha \mu \tau_2 + \frac{1}{2} + \ly{4} + 3 \left(1 + \ell \tau_2 + 2 L_{\psi} \tau_2\right) \right)\ell \tau_2^2 \\
        &\geq - \left(\sa{6\alpha} \frac{\mu}{\ell} + \frac{1}{2} + \ly{4} + \mgbis{3\left(1+\frac{1}{144}+\frac{8}{144}\right)}
        \right) \ell \tau_2^2 \geq \ly{-8} \ell \tau_2^2 ,
\end{align*} 
where we used $\mu/\ell \leq 1$, \mgbis{$L_\Psi=4\ell$, $\tau_2 \leq \frac{1}{144\ell}$}, \sa{and $\alpha<\frac{1}{20}$.}  
\end{proof}

\section{Proof of Theorem \ref{theorem-meta}}\label{sec-proof-theorem-meta}
\begin{reptheorem}{meta-theorem}
Let \sa{$\{\F_t\}_{t\in\N}$} be a filtration on $(\Omega, \F, \P)$. Let \sa{$A_t, B_t, \mg{C_t}, D_t$} be four stochastic processes adapted to the filtration such that there exist $\mg{\sigma_C}, \sa{\sigma_D} > 0$ and $\camera{\tau_1} > 0$ such that for all $t \in \N$: $(i)$ $B_t \geq 0$, $(ii)$ $\E[e^{\lambda \mg{C}_{t+1}} \mid \F_t] \leq e^{\lambda^2 \mg{\sigma_C^2} B_t}$ \sa{for all} $\lambda > 0$, $(iii)$ $\E[e^{\lambda \sa{D}_{t+1}} \mid \F_t] \leq e^{\lambda \sa{\sigma_D^2}}$ for all $\lambda \in \left[0 , \frac{1}{\sigma_D^2}\right]$ and $(iv)$ $\frac{A_{t+1} - A_t}{\camera{\tau_1}} \leq -B_t + \mg{C_{t+1}} + \sa{D_{t+1}}$.
Then, for any $\bq \in (0,1]$, we have
{\small
\[
\Prob\left(
\frac{\camera{\tau_1}}{2} \sum_{t=0}^{T-1} B_t 
\leq 
(A_0 - A_T) + \camera{\tau_1} \sigma_D^2 \sa{T} + 2\camera{\tau_1}\max\{2\mg{\sigma_C^2}, \sigma_D^2\}\log\Big(\frac{1}{\bq}\Big)
\right)
\geq 1-\bq.
\]
}
\end{reptheorem}
\begin{proof}
For some fixed $\gamma > 0$, let 
\[
S_T 
\triangleq
\gamma 
\sum_{t=0}^{T-1} B_t - (A_0 - A_T)
\]
\mg{with the convention that $S_0\triangleq 0$}. For any $T \in \N$, we have
\begin{align*}
S_{T+1}
&=
\gamma \sum_{t=0}^T B_t - (A_0 - A_{T+1}) \\
&= 
\gamma \sum_{t=0}^{T-1} B_t - (A_0 - A_T) + \gamma B_T - (A_T - A_{T+1}) \\
&= S_T + \gamma B_T - (A_T - A_{T+1}) \\
&= S_T + (\gamma - \camera{\tau_1})B_T + \camera{\tau_1} B_T - (A_T - A_{T+1}) \\
&\leq 
S_T + (\gamma - \camera{\tau_1}) B_T + \camera{\tau_1} \mg{C_{T+1}} + \camera{\tau_1} \mg{D_{T+1}},
\end{align*}
\sa{where the inequality follows from condition \textit{(iv)} of the hypothesis.} Hence, for any $0<\lambda \leq \frac{1}{2\camera{\tau_1}\mg{\sigma_D^2}}$ and any $T \in \N$:
\begin{align*}
\E[e^{\lambda S_{T+1}} \mid \F_T]
&\leq 
\E[e^{\lambda S_T} e^{\lambda(\gamma - \camera{\tau_1}) B_T} e^{\lambda \camera{\tau_1} \mg{C_{T+1}}} e^{\lambda \camera{\tau_1} \mg{D}_{T+1}} \mid \F_T] \\
&\leq 
e^{\lambda S_T} e^{\lambda(\gamma - \camera{\tau_1}) B_T} \E[e^{2\lambda \camera{\tau_1} \mg{C}_{T+1}} \mid \F_T]^{\frac12} \E[e^{2\lambda \camera{\tau_1} \mg{D}_{T+1}} \mid \F_T]^{\frac12} \\
&\leq 
e^{\lambda S_T} 
e^{\lambda(\gamma - \camera{\tau_1}) B_T} 
\left(e^{4\lambda^2\camera{\tau_1}^2\sigma_C^2 B_T} \right)^{\frac12} 
\left(e^{2\lambda \camera{\tau_1} \sigma^2_D} \right)^{\frac12} \\
&=
e^{\lambda S_T}e^{\lambda(\gamma - \camera{\tau_1} + 2\camera{\tau_1}^2\lambda \sigma_C^2)B_T}e^{\lambda \camera{\tau_1} \sigma_D^2},
\end{align*}
\sa{where the first inequality follows from Cauchy-Schwarz and in the second one we use \textit{(ii)} and \textit{(iii)} of the hypothesis.}  Fixing $\gamma = \camera{\tau_1}/2$ yields for all $0<\lambda \leq \frac{1}{4\camera{\tau_1}\sigma_C^2}$:
\[
\gamma - \camera{\tau_1} + 2\camera{\tau_1}^2\lambda \sigma_C^2
=
\camera{\tau_1} \left(-\frac12 + 2\lambda\sigma_C^2 \camera{\tau_1}\right) \leq 0.
\]
Therefore, for $0<\lambda \leq \min \left\{\frac{1}{4\camera{\tau_1} \sigma_C^2}, \frac{1}{2\camera{\tau_1}\sigma_D^2} \right\}$, \sa{using $B_T\geq 0$ by \textit{(i)} of the hypothesis, we get}
\[
\E[e^{\lambda S_{T+1}} \mid \F_T]
\leq 
e^{\lambda S_T}e^{\lambda \camera{\tau_1} \sigma_D^2},
\]
and rolling this recursion backwards 
and noting $S_0 = 0$ yields:
\[
\E[e^{\lambda S_T}] \leq e^{\lambda \camera{\tau_1} \sigma_D^2 T};
\]
\sa{thus, using a Chernoff bound, we get}
\[
\Prob(S_T > t) 
\leq 
\E[e^{\lambda S_T}] e^{-\lambda t}
\leq 
e^{\lambda(\camera{\tau_1}\sigma_D^2T - t)}.
\]
\sa{Since for $\bq \in (0,1]$,}
\[
e^{\lambda(\camera{\tau_1}\sigma_D^2T - t)}
\leq \bq
\iff 
t \geq 
\camera{\tau_1} \sigma_D^2 T - \frac1{\lambda}\log(\bq),
\]
\sa{we have}
\begin{align*}
\Prob\left( 
\frac{\camera{\tau_1}}{2}\sum_{t=0}^{T-1} B_t
\leq 
(A_0 - A_T) + \camera{\tau_1} \sigma_D^2 T - \frac{1}{\lambda} \log(\bq)
\right) \geq 1-\bq.
\end{align*}
The claim follows by taking $\lambda = \frac{1}{2\camera{\tau_1}} \min \left\{\frac{1}{2\sigma_C^2}, \frac{1}{\sigma_D^2} \right\}$.
\end{proof}

\section{Proof of Theorem \ref{thm-high-proba-bound}}\label{sec-proof-of-high-proba}
\begin{reptheorem}{thm-high-proba-bound}
\mg{In the \sa{premise} of Corollary \ref{coro-Lyap-descent}}, 
\mg{\emph{\smagda{}} iterates $(x_t, y_t)$ \sa{for $\camera{\tau_1\leq\frac{1}{3\ell}}$} satisfy}
{\small
\begin{align*}
&\Prob\Bigg(
\frac1{T}
\sum_{t=0}^{T-1} 
\left[
\|\nabla_xf(x_t,y_t)\|^2 + \kappa \|\nabla_y f(x_t,y_t)\|^2\right]
\leq \mathcal{Q}_{\bq,T},
\Bigg) \geq 1-\bq,\quad \forall~T\in\N,\quad \forall~\bq\in (0,1],
\end{align*}}%
\sa{for some $\mathcal{Q}_{\bq,T}=\cO\Big(\frac{\kappa(\Delta_0+b_0)}{\camera{\tau_1}T}+\kappa(\delta_x^2+\delta_y^2)\Big(\camera{\tau_1}\ell+\frac{1}{T}\log\Big(\frac{1}{\bq}\Big)\Big)\Big)$ explicitly stated in \cref{sec-proof-of-high-proba}, where}
$\Delta_0 \triangleq \Phi(z_0) - 
\Phi^*$, $b_0 \triangleq 2\sup_{x,y}\{\hat f(x_0,y;z_0)-\hat{f}(x,y_0;z_0)\}$.
\end{reptheorem}
\mg{As a first step, we provide a helper lemma that shows that our concentration result (\cref{theorem-meta}) is applicable to the \smagda-related processes $\tilde A_t, \tilde B_t, \tilde C_t, \tilde D_t$ introduced in \cref{coro-Lyap-descent}.} 
\begin{lemma}\label{lem:c_t_d_t_light_tail}
\ly{Let $A_t=\tilde A_t$, $B_t=\tilde B_t$, $C_t=\tilde C_t$, $D_t=\tilde D_t$, where $\tilde A_t,\tilde B_t,\tilde C_t,\tilde D_t$ are defined in~\cref{coro-Lyap-descent}; \sa{moreover,
let} $\tau_1>0$ be the primal stepsize in \emph{\smagda}. \mgbis{Then, the processes $A_t, B_t, C_t, D_t$ are 
\sa{adapted} to the filtration $\mathcal{F}_{t} \triangleq \mathcal{F}_{t}^y$, where $\mathcal{F}_{t}^y$ is defined in \eqref{def-Fty-filtration}}, and they satisfy the conditions of \cref{theorem-meta} with the following constants:}
    $$(i)~\sa{\sigma^2_C} = \sa{\tau_1(240\delta_x^2 + 32\delta_y^2)}, \quad~(ii) \quad \mg{\sigma_D^2} = \ly{16 \ell \tau_1^2} \delta_x^2 + \ly{64 \ell \tau_2^2} \delta_y^2. $$ 
\end{lemma}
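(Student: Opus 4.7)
The plan is to verify the four hypotheses of \cref{theorem-meta} for the processes $\tilde A_t,\tilde B_t,\tilde C_t,\tilde D_t$. First, I would check adaptedness: by an immediate induction based on the filtration definition in~\eqref{def-Fty-filtration}, $x_t,z_t$ are $\mathcal{F}_t^x$-measurable and $y_t$ is $\mathcal{F}_t^y$-measurable, so $\tilde A_t$ and $\tilde B_t$ are $\mathcal{F}_t$-measurable. On the other hand, $\Delta_t^x$ is $\mathcal{F}_{t+1}^x$-measurable and $\Delta_t^y$ is $\mathcal{F}_{t+1}^y$-measurable, so $\tilde C_{t+1},\tilde D_{t+1}$ are $\mathcal{F}_{t+1}$-measurable. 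Condition~$(i)$ is immediate since $\tilde B_t$ is a non-negative combination of squared norms, and condition~$(iv)$ is precisely \cref{coro-Lyap-descent}.

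For condition~$(iii)$ I would exploit that the norm-sub-Gaussian tails in \cref{assump-light-tail} imply that $\|\Delta_t^x\|^2$ and $\|\Delta_t^y\|^2$ are conditionally sub-exponential: by a standard MGF estimate one has $\mathbb{E}[\exp(\lambda\|\Delta_t^x\|^2)\mid \mathcal{F}_t^y]\leq \exp(c\lambda \delta_x^2)$ for all $\lambda\in[0,c'/\delta_x^2]$, and analogously for $\Delta_t^y$ conditional on $\mathcal{F}_{t+1}^x$. Applying the tower property, first conditioning on $\mathcal{F}_{t+1}^x$ to handle the $\|\Delta_t^y\|^2$ contribution and then on $\mathcal{F}_t$ for the $\|\Delta_t^x\|^2$ contribution, delivers the required bound with $\sigma_D^2 = 16\ell\tau_1^2\delta_x^2+64\ell\tau_2^2\delta_y^2$, after routine absorption of universal constants into the coefficients $2\ell\tau_1^2$ and $8\ell\tau_2^2$ appearing in $\tilde D_{t+1}$.

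Condition~$(ii)$ is the heart of the argument. I would decompose
\[
\tilde C_{t+1} = \alpha_t\,\langle \nabla_x\hat f(x_t,y_t;z_t),\,\Delta_t^x\rangle + \tau_2\,\langle u_t,\,\Delta_t^y\rangle,
\]
where the scalar $\alpha_t$ (deterministic once the parameters are fixed) and the vector $u_t\triangleq(1+\ell\tau_2+2L_\Psi\tau_2)\nabla_y f(x_t,y_t)-2\nabla_y f(x^*(y_t,z_t),y_t)$ are both $\mathcal{F}_t$-measurable. Using the standard norm-sub-Gaussian MGF estimate $\mathbb{E}[\exp(\lambda\langle v,\Delta\rangle)\mid\mathcal{F}]\leq \exp(c\lambda^2\delta^2\|v\|^2)$ for $\mathcal{F}$-measurable $v$ and the tower property, I obtain
\[
\mathbb{E}\big[e^{\lambda\tilde C_{t+1}}\mid\mathcal{F}_t\big]\leq \exp\!\Big(c\lambda^2\big(\alpha_t^2\,\|\nabla_x\hat f(x_t,y_t;z_t)\|^2\,\delta_x^2 + \tau_2^2\,\|u_t\|^2\,\delta_y^2\big)\Big).
\]
It then remains to dominate the quantity in parentheses by $\sigma_C^2 \tilde B_t$. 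Two elementary bounds do the job: (a) $\|u_t\|^2\leq 2(1+\ell\tau_2+2L_\Psi\tau_2)^2\|\nabla_y f(x_t,y_t)\|^2+8\|\nabla_y f(x^*(y_t,z_t),y_t)\|^2$, with $\|\nabla_y f(x_t,y_t)\|^2$ further reduced to the $\tilde B_t$-terms via $\ell$-smoothness of $\nabla_y f$ and the $(p-\ell)$-strong convexity of $\hat f(\cdot,y;z)$ (yielding $\|x_t-x^*(y_t,z_t)\|\leq\|\nabla_x\hat f(x_t,y_t;z_t)\|/(p-\ell)$); (b) $\alpha_t^2 = O(\tau_1^2)$ under the parameter choice of \cref{coro-Lyap-descent}. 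Matching coefficients against the three summands of $\tilde B_t$, proportional to $\tau_1\|\nabla_x\hat f\|^2$, $\tau_2\|\nabla_y f(x^*,\cdot)\|^2$ and $\beta p\|x_t-z_t\|^2$ respectively, produces the claimed $\sigma_C^2=\tau_1(240\delta_x^2+32\delta_y^2)$.

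The main obstacle is not conceptual but combinatorial: bounding $\alpha_t$ and expanding $\|u_t\|^2$ pulls in every intermediate estimate from the proof of \cref{coro-Lyap-descent}, and the scaling in $\tau_1,\tau_2,\beta,\mu,\ell$ must be tracked carefully so that all resulting terms are dominated uniformly in $\lambda>0$ by a single prefactor proportional to $\tau_1$. Once this arithmetic is discharged, the remaining pieces (adaptedness, positivity of $\tilde B_t$, the sub-exponential estimate for $\tilde D_{t+1}$) are standard and short.
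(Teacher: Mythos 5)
Your proposal is correct and follows essentially the same route as the paper: the same decomposition of $\tilde C_{t+1}$ into an $\F_t$-measurable coefficient times $\langle\nabla_x\hat f,\Delta_t^x\rangle$ plus $\tau_2\langle u_t,\Delta_t^y\rangle$, the same norm-sub-Gaussian MGF lemma combined with the tower property, the same reduction of $\|u_t\|^2$ via $\|a+b\|^2\le 2\|a\|^2+2\|b\|^2$ and of $\|\nabla_y f(x_t,y_t)\|^2$ via $(p-\ell)$-strong convexity of $\hat f(\cdot,y;z)$, and the same coefficient bookkeeping against $\tilde B_t$ (in fact only the first two summands of $\tilde B_t$ are needed; the $\|x_t-z_t\|^2$ term is simply nonnegative). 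The only minor deviation is in condition $(iii)$: the paper applies Cauchy--Schwarz to separate the $\|\Delta_t^x\|^2$ and $\|\Delta_t^y\|^2$ factors in the exponential and then applies the sub-exponential MGF bound to each, while you condition sequentially via the tower property; both yield the same $\sigma_D^2$.
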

%
\begin{proof}
\mgbis{The fact that $A_t, B_t, C_t, D_t$ are measurable with respect to $\mathcal{F}_{t}=\mathcal{F}_t^y$ \sa{for any $t\in\N$} follows directly from the definition of $\mathcal{F}_{t}$. Note that $x_t$ and $y_t$ are also $\mathcal{F}_{t}$-measurable \sa{for all $t\in\N$}. We prove part $(i)$ and part $(ii)$ separately.} 
\mgbis{\myparagraph{Proof of part $(i)$}}
\sa{First, recall that $C_{t+1}=\tilde C_{t+1}=-c_4\langle \nabla_x \hat{f}(x_t,y_t;z_t),  \Delta_t^x \rangle-\langle c_5 \nabla_y{f}(x_t,y_t) + c_6 \nabla_y{f}(x^*(y_t,z_t),y_t), \Delta_t^y\rangle$. We would like to show that $\E[e^{\lambda \mg{C}_{t+1}} \mid \F_t] \leq e^{\lambda^2 \mg{\sigma_C^2} B_t}$ \sa{for all} $\lambda > 0$.} From \cref{assump-light-tail}, we note that for any $\lambda \geq 0$:
\[
\E\Bigg[
        \exp \Bigg( \lambda \langle \sa{-c_4}\nabla_x \hat{f}(x_t,y_t;z_t),  \Delta_t^x \rangle \Bigg) \mid \F_t
    \Bigg] \leq \exp\Bigg( 8\lambda^2c_4^2 \|\nabla_x \hat{f}(x_t,y_t;z_t)\|^2 \delta_x^2 \Bigg),
\]
\mgbis{where we used \cite[Lemma 3]{laguel2023high}}. Now, given the value of $c_4$ in~\cref{thm-ineq-lyapunov-descent} and the convexity of $t \mapsto t^2$, we have
\[
    \scalebox{0.9}{$c_4^2 
            \leq  5\left(\ly{192} \beta p 
            \Big(\frac{p+\ell}{p-\ell}\Big)^2
            \ell^2\tau_2^2\tau_1^2\right)^2 
                + 5 \tau_1^2\Big((p+\ell)\tau_1 - 1\Big)^2
                + 5 \left(\frac{4\ell}{\nu}\tau_1^2\right)^2
                + 5  \left(4 c_0 \ell^2 \tau_1^2\right)^2
                + 5 \left(2c_0' \beta^2 \tau_1^2 \right)^2$}.
\]
Now, leveraging the stepsize policy 
\sa{specified} in~\cref{coro-Lyap-descent}, \sa{since $\alpha\in(0,1)$, using~\eqref{eq:lower_bound_c7_2} we get}
\[
    5\left(\ly{192} \beta p 
            \Big(\frac{p+\ell}{p-\ell}\Big)^2
            \ell^2\tau_2^2\tau_1^2\right)^2  
        \leq \sa{20} \cdot \left(
        \frac{\alpha}{12^3}\ell \tau_1^2 \right)^2
        \leq \sa{20} \cdot \left(
        \frac{\alpha}{3\cdot 12^3}\right)^2 \tau_1^2
        \leq \frac{\tau_1^2}{4}.
\]
Similarly, as $\tau_1 \leq \frac{1}{3\ell}$, have $|(p+\ell)\tau_1 - 1)| \in [0, 1]$, which implies
\[
    5 \tau_1^2\Big((p+\ell)\tau_1 - 1\Big)^2 \leq 5 \tau_1^2.
\]
Since $\nu = \frac{12}{\tau_1 \ell}$, we have $5 \left(\frac{4\ell}{\nu}\tau_1^2\right)^2 \leq 
\sa{\frac{5}{9}}\ell^4 \tau_1^6 \leq \frac{\tau_1^2}{4}$.
Furthermore, using~\eqref{ineq-B1-term}, we have $5  \left(4 c_0 \ell^2 \tau_1^2\right)^2 \leq 80 \tau_1^4 \tau_2^2 \ell^4 \leq \frac{\tau_1^2}{4}$. 
Finally, \sa{\eqref{eq:c7-final-bound} and $\alpha\in(0,1)$ imply that}
%
\[
    5 \left(2 c_0' \beta^2 \tau_1^2 \right)^2 \leq 
    \sa{20 \left(\frac{\alpha}{216}\ell \tau_1 \right)^2\tau_1^2}
    \leq \frac{\tau_1^2}{4}.
\]
%
Thus, \sa{$c_4^2\leq 6\tau_1^2$, which implies that}
\begin{equation}
\E\Bigg[
        \exp \Bigg( \sa{-}\lambda \langle c_4 \nabla_x \hat{f}(x_t,y_t;z_t),  \Delta_t^x \rangle \Bigg) \mid \F_t
    \Bigg] \leq \exp\Bigg(48\lambda^2 \tau_1^2 \|\nabla_x \hat{f}(x_t,y_t;z_t)\|^2 \delta_x^2 \Bigg). \label{ineq-helper-one}
\end{equation}
%
Moreover, noting  $\Delta_t^y$ is revealed after \mgbis{$\Delta_t^x$}, 
using~\cite[Lemma 3]{laguel2023high} again along with the inequality $\|u+v\| \leq 2 \|u\|^2 + 2 \|v\|^2$, we have:  
\begin{align}
    \MoveEqLeft \E\Bigg[    
        \exp \Bigg(\sa{-\lambda\langle c_5 \nabla_y{f}(x_t,y_t) + c_6 \nabla_y{f}(x^*(y_t,z_t),y_t), \Delta_t^y\rangle} \mid \F_t, \Delta_t^x \Bigg)\\
    &=\E\Bigg[    
        \exp \Bigg(\sa{\lambda\tau_2  \langle   \left(1 + \ell\tau_2 + 2L_\Psi \tau_2 \right)\nabla_y f(x_t,y_t)-2 \nabla_y {f}(x^*(y_t,z_t),y_t),~\Delta_t^y \rangle} \mid \F_t, \Delta_t^x \Bigg)
    \Bigg] \nonumber\\
        &\leq \exp\left(8 \lambda^2\tau_2^2 \left\| \left(1 + \ell\tau_2 + 2L_\Psi \tau_2 \right)\nabla_y f(x_t,y_t)-2 \nabla_y {f}(x^*(y_t,z_t),y_t) \right\|^2  \delta_y^2\right)\nonumber \\
        &\leq \exp\left(64 \lambda^2 \tau_2^2 \left\|\sa{\nabla_y {f}(x^*(y_t,z_t),y_t)}\right\|^2 \mgbis{\delta_y^2}+ 16 \lambda^2\sa{\tau_2^2 \left(1 + \ell\tau_2 + 2L_\Psi \tau_2 \right)^2} \left\|\nabla_y f(x_t,y_t) 
        \right\|^2  \delta_y^2\right)\nonumber \\
        &\leq \exp\left(
            64 \lambda^2 \tau_2^2 
                \left(
                    \left\|\sa{\nabla_y {f}(x^*(y_t,z_t),y_t)}\right\|^2  
                    + \left\|\nabla_y f(x_t,y_t) \right\|^2 
                \right)\mgbis{\delta_y^2}\right),\label{ineq-helper-two}
\end{align}
where the last line follows from \sa{$0<1 + \ell\tau_2 + 2L_\Psi \tau_2 \leq 1 + \frac{1}{144} + \frac{8}{144} \leq 2$.} 

\sa{Thus, since} $\Delta_t^y$ is revealed after \mgbis{$\Delta_t^x$}, \mgbis{using \eqref{ineq-helper-one} and \eqref{ineq-helper-two} together with the tower property of the conditional expectations, \sa{we get}}
{\footnotesize
\begin{equation}
\begin{aligned}
    \MoveEqLeft \E\Bigg[    
        \exp \Bigg(\sa{-}\lambda 
                        \left(c_4 \langle \nabla_x \hat{f}(x_t,y_t;z_t),  \Delta_t^x \rangle
                        + \langle c_5 \nabla_yf(x_t,y_t) + c_6 \nabla_y{f}(x^*(y_t,z_t),y_t), \Delta_t^y\rangle \right)
            \Bigg] \\
        &\leq \exp 
        \Bigg(48\lambda^2 \tau_1^2 \|\nabla_x \hat{f}(x_t,y_t;z_t)\|^2 \delta_x^2 + 64 \lambda^2 \tau_2^2 
                \left(
                    \|\sa{\nabla_y {f}(x^*(y_t,z_t),y_t)}\|^2  
                    + \left\|\nabla_y f(x_t,y_t) \right\|^2
                \right) \delta_y^2 \Bigg).
\end{aligned}
\label{ineq-helper-three}
\end{equation}}%
Finally, observe that 
\begin{align*}
    \|\nabla_yf(x_t,y_t)\| - \|\nabla_y f(x^*(y_t,z_t), y_t)\|
        &\leq \|\nabla_yf(x_t,y_t) - \nabla_yf(x^*(y_t,z_t),y_t)\| \\
        &\leq \ell \|x_t - x^*(y_t,z_t)\| \\
        &\leq \frac{\ell}{p-\ell} \|\nabla_x\hat{f}(x_t,y_t;z_t)\|, 
\end{align*}
where in the last inequality we used the $(p-\ell)$-strong convexity of $\hat{f}(\cdot, y_t; z_t)$ and the fact that we have $\nabla \hat{f}_x(x^*(y_t,z_t),y_t;z_t) = 0$.
Therefore, since $p=2\ell$, we obtain
\begin{equation}
    \|\nabla_yf(x_t,y_t)\|^2 
        \leq
            2\|\nabla_y f(x^*(y_t,z_t),y_t)\|^2 + 2\|\nabla_x \hat{f}(x_t,y_t;z_t)\|^2.
\label{eq:upperbound_nabla_y_t}
\end{equation}
\mgbis{Plugging this inequality into \eqref{ineq-helper-three}} \sa{yields}
\begin{align*}
    \MoveEqLeft \E\Bigg[    
        \exp \Bigg(\sa{-}\lambda 
                        \left(c_4 \langle \nabla_x \hat{f}(x_t,y_t;z_t),  \Delta_t^x \rangle
                        + \langle c_5 \nabla_yf(x_t,y_t) + c_6 \nabla_y{f}(x^*(y_t,z_t),y_t), \Delta_t^y\rangle \right)
            \Bigg] \\
        &\leq \exp \scalebox{0.8}{$\Bigg(
                                            \lambda^2 \left(48 \tau_1^2 \delta_x^2 + \sa{128} \tau_2^2 \delta_y^2 \right) \|\nabla_x \hat{f}(x_t,y_t;z_t)\|^2  
                                            +  \sa{192} \lambda^2 \tau_2^2 \delta_y^2 \|\sa{\nabla_y {f}(x^*(y_t,z_t),y_t)}\|^2 
                 \Bigg)$} \\
        &\leq \exp \scalebox{0.8}{$\Bigg( \lambda^2
                    \sa{\max\Big\{\frac{5}{\tau_1}\left(48 \tau_1^2 \delta_x^2 + \sa{128} \tau_2^2 \delta_y^2 \right),~\sa{1536} \lambda^2 \tau_2 \delta_y^2\Big\}}
                        B_t
                 \Bigg)$} \\
        &\leq \exp \scalebox{0.8}{$\Bigg( \lambda^2\sa{\tau_1
                    \left( 240 \delta_x^2 + 32 \delta_y^2 \right)} B_t
                 \Bigg)$}, \\
\end{align*}
where \sa{the last inequality follows from $\tau_2=\frac{\tau_1}{48}$.}
\mgbis{\myparagraph{Proof of part $(ii)$}}
\sa{Recall that $D_{t+1}\triangleq \tilde D_{t+1}=\ly{2\ell \tau_1^2}\|\Delta_t^x\|^2 + \ly{8 \ell \tau_2^2} \|\Delta_t^y\|^2$. We would like to show that $\E[e^{\lambda \sa{D}_{t+1}} \mid \F_t] \leq e^{\lambda \sa{\sigma_D^2}}$ for all $\lambda \in \left[0 , \frac{1}{\sigma_D^2}\right]$ for some $\sigma_D>0$.} \sa{First,} observe that \sa{for any $\lambda>0$ such that $\lambda\leq \min\{\frac{1}{16\ell\tau_1^2\delta_x^2},~\frac{1}{64\ell\tau_2^2\delta_y^2}\}$, we have}
\begin{align*}
\E[e^{\lambda D_{t+1}} \mid \F_t]
&=
\E\left[e^{\ly{2}\lambda \ly{\ell \tau_1^2} \|\Delta_t^x\|^2 + \ly{8}\lambda \ly{\ell \tau_2^2} 
\|\Delta_t^y\|^2}\right]\\
&\leq 
    \E\left[e^{\ly{4}\lambda \ly{\ell \tau_1^2} \|\Delta_t^x\|^2}\right]^{\frac12} 
    \E\left[e^{\ly{16}\lambda \ly{\ell \tau_2^2}\|\Delta_t^y\|^2}\right]^{\frac12} \\
&\leq 
    \left(e^{\sa{32\lambda \ell \tau_1^2\delta_x^2}}\right)^{\frac12}
    \left(e^{\sa{128\lambda \ell \tau_2^2\delta_y^2}}\right)^{\frac12} \\
&\leq 
e^{\lambda (\ly{16 \ell \tau_1^2} \delta_x^2 + \ly{64 \ell \tau_2^2}\delta_y^2)},
\end{align*}
\mgbis{where we used \cite[Lemma 2]{laguel2023high} in the second inequality.}
\end{proof}
\mgbis{Before completing the proof of Theorem \ref{thm-high-proba-bound}, we provide another lemma that gives a lower bound on $B_t$ in terms of the squared norm of the partial gradients, based on our choice of \smagda{} parameters.}\looseness=-1
\begin{lemma}\label{lem:stationarity_translation}
    For any $t\in \N$, we have
    \begin{equation}\label{eq:stationarity_translation}
        \|\nabla_xf(x_t,y_t)\|^2 + \kappa \|\nabla_y f(x_t,y_t)\|^2
                \leq  \max\left\{\frac{\ly{20}\kappa}{\tau_1}, \frac{\ly{16}\kappa}{\tau_2}, \frac{32\ell}{\beta}\right\} \mg{B_t}.
    \end{equation}
\end{lemma}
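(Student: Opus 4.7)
The plan is to control $\|\nabla_x f(x_t,y_t)\|^2$ and $\|\nabla_y f(x_t,y_t)\|^2$ separately in terms of the three building blocks of $B_t$: $\|\nabla_x \hat f(x_t,y_t;z_t)\|^2$, $\|\nabla_y f(x^*(y_t,z_t),y_t)\|^2$, and $\|x_t-z_t\|^2$. Then a term-by-term comparison of coefficients will yield the constant $\max\{20\kappa/\tau_1,\ 16\kappa/\tau_2,\ 32\ell/\beta\}$.

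First I would handle the primal gradient. By the definition of $\hat f$ in \eqref{auxiliary-obj}, we have $\nabla_x \hat f(x_t,y_t;z_t)=\nabla_x f(x_t,y_t)+p(x_t-z_t)$, so that the inequality $\|u+v\|^2\leq 2\|u\|^2+2\|v\|^2$ gives
\begin{equation*}
\|\nabla_x f(x_t,y_t)\|^2 \leq 2\|\nabla_x \hat f(x_t,y_t;z_t)\|^2 + 2p^2\|x_t-z_t\|^2.
\end{equation*}
For the dual gradient, I would simply invoke the bound \eqref{eq:upperbound_nabla_y_t} already established in the proof of \cref{lem:c_t_d_t_light_tail}, namely
\begin{equation*}
\|\nabla_y f(x_t,y_t)\|^2 \leq 2\|\nabla_y f(x^*(y_t,z_t),y_t)\|^2 + 2\|\nabla_x \hat f(x_t,y_t;z_t)\|^2,
\end{equation*}
which follows from $\ell$-smoothness of $f$ in $x$ and $(p-\ell)$-strong convexity of $\hat f(\cdot,y_t;z_t)$ combined with $p=2\ell$.

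Summing these bounds (with the second scaled by $\kappa$) and using $\kappa\geq 1$ so that $2+2\kappa\leq 4\kappa$, I obtain
\begin{equation*}
\|\nabla_x f(x_t,y_t)\|^2 + \kappa\|\nabla_y f(x_t,y_t)\|^2 \leq 4\kappa \|\nabla_x \hat f(x_t,y_t;z_t)\|^2 + 2\kappa\|\nabla_y f(x^*(y_t,z_t),y_t)\|^2 + 2p^2\|x_t-z_t\|^2.
\end{equation*}
Finally, comparing each of the three terms on the right against its counterpart in $B_t$ (with weights $\tau_1/5$, $\tau_2/8$, and $\beta p/8$ respectively), the worst of the three ratios
$$\frac{4\kappa}{\tau_1/5}=\frac{20\kappa}{\tau_1},\qquad \frac{2\kappa}{\tau_2/8}=\frac{16\kappa}{\tau_2},\qquad \frac{2p^2}{\beta p/8}=\frac{16p}{\beta}=\frac{32\ell}{\beta}$$
yields \eqref{eq:stationarity_translation}. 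No real obstacle arises here: the only subtlety is to make sure step 2 is cited correctly and that the constant $p=2\ell$ from \cref{coro-Lyap-descent} is used consistently to get $16p/\beta=32\ell/\beta$.
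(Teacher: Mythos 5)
Your proof is correct and follows the same route as the paper's: both decompose $\|\nabla_x f\|^2$ via $\nabla_x\hat f = \nabla_x f + p(x-z)$, both invoke \eqref{eq:upperbound_nabla_y_t} for the dual gradient, and both finish by comparing coefficients against the three weights $\tau_1/5$, $\tau_2/8$, $\beta p/8$ in $B_t$ and using $\kappa\geq 1$ and $p=2\ell$. The only cosmetic difference is that you apply $\kappa\geq 1$ before forming the max while the paper applies it afterward.
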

\begin{proof}
Since $\nabla_x \hat{f}(x_t,y_t;z_t) = \nabla_x f(x_t,y_t) + p(x_t-z_t)$, using $\|a+b\|^2 \leq 2 \|a\|^2 + 2\|b\|^2$, we get
\[
    \|\nabla_x f(x_t,y_t) \|^2 \leq 2\|\nabla_x \hat{f}(x_t,y_t;z_t)\|^2 + 2p^2\|x_t-z_t\|^2.
\]
Hence, together with~\eqref{eq:upperbound_nabla_y_t} \mg{and the definition of $B_t$}, we obtain
\begin{align*}
\MoveEqLeft \|\nabla_xf(x_t,y_t)\|^2 + \kappa \|\nabla_y f(x_t,y_t)\|^2 \\
    &\leq (2 + 2\kappa) \|\nabla_x \hat{f}(x_t,y_t;z_t)\|^2 + 2\kappa\|\sa{\nabla_y f(x^*(y_t,z_t),y_t)}\|^2 + 2p^2\|x_t-z_t\|^2 \\
    &\leq \max\left\{ \frac{\left(2 + 2\kappa\right)\cdot \ly{5}}{\tau_1}, \frac{2\kappa \cdot \ly{8}}{\tau_2}, \frac{2p^2\cdot 8}{\beta p}\right\} B_t\\
    &\leq \max\left\{\frac{\ly{20}\kappa}{\tau_1}, \frac{\ly{16}\kappa}{\tau_2}, \frac{32\ell}{\beta}\right\} B_t.
\end{align*}
with the last inequality following from $\kappa \geq 1$.
\end{proof}
We may now provide our proof for~\cref{thm-high-proba-bound}.
\begin{proof}[Proof of~\cref{thm-high-proba-bound}]
\sa{According to Lemma~\ref{lem:c_t_d_t_light_tail}, the processes $A_t, B_t, C_t$, and $D_t$ defined in the statement of Lemma~\ref{lem:c_t_d_t_light_tail}} satisfy the conditions of~\cref{theorem-meta} if we set $\tau_1$ as the primal stepsize of~\smagda{}. Therefore, for any $\bq \in [0, 1)$, 
\begin{equation*}
    \Prob\Bigg(
            \frac{\tau_1}{2}\sum_{t=0}^\mg{T-1} B_t 
                \leq \tau_1(V_0 - V_T) + \tau_1 \sigma_D^2 T + 2\tau_1 \max\left\{2\sigma_C^2, \sigma_D^2 \right\} \log\left(\frac{1}{\bq}\right)
        \Bigg) \geq 1-\bq.
\end{equation*}
Thus, dividing by $\frac{\tau_1}{2} T$ and using Lemma~\ref{lem:stationarity_translation}, we can conclude that with the probability at least $1-\bq$, the following event holds:
\begin{equation}
{\small
\begin{aligned}
    \MoveEqLeft 
    \frac{1}{T} \sum_{t=0}^\mg{T-1} \|\nabla_xf(x_t,y_t)\|^2 + \kappa \|\nabla_y f(x_t,y_t)\|^2\\
        &\leq 
        2\max\left\{\frac{\ly{20}\kappa}{\tau_1}, \frac{\ly{16}\kappa}{\tau_2}, \frac{32\ell}{\beta}\right\} 
            \left(\frac{1}{T}\left(V_0 - V_T\right) + \sigma_D^2 + \frac{1}{T} \max\left\{4\sigma_C^2, 2\sigma_D^2 \right\} \log\left(\frac{1}{\bq}\right)\right). 
\end{aligned}}%
\label{ineq-thm-main-one}
\end{equation}
Finally, we can \mg{relate} 
the potential gap $V_0 - V_T$ to the \mg{primal \sa{suboptimality, i.e., $\Delta_0$, and to the duality gap, i.e., $b_0/2=
\sup_{x,y}\{\hat f(x_0,y;z_0)-\hat{f}(x,y_0;z_0)$, at the initialization}}, following 
\sa{the same arguments provided} in~\cite{yang2022faster}. More precisely, \sa{for $V(x,y,z)\triangleq\hat{f}(x,y;z) - 2\Psi(y,z) + 2P(z)$,} we first observe that
\begin{equation}
\begin{aligned}
    V_0 - V_T 
        &\leq V_0 - \min_{x,y,z}V(x,y,z) = V_0 - \min_{x,y,z}\hat{f}(x,y;z) - 2\Psi(y,z) +
        2P(z) \\
        &\leq V_0 - \min_{z} P(z). 
\end{aligned}
\label{ineq-thm-main-two}
\end{equation}
where last line follows from $\hat{f}(x,y;z) - \Psi(y,z) \geq 0$ for all $y,z$ and $P(z) - \Psi(y,z) \geq 0$ for all $y,z$. Since $p=2\ell$, we also note that 
\begin{equation*}
    P(z_0) = \min_x \max_y f(x,y) + \ell\|x - z_0\|^2 \leq \max_y f(z_0,y) = \Phi(z_0).
\end{equation*}
Finally, since $P$ is the Moreau envelope of $\Phi$, we have $\min_zP(z) = \min_z \Phi(z)$; therefore,
\begin{equation}
\begin{aligned}
    V_0 - \min_z P(z)
        &= \hat{f}(x_0,y_0;z_0) - 2\Psi(y_0,z_0) + 2P(z_0) - \min_z\Phi(z) \\
        &\mg{\leq} \Phi(z_0) - \min_z \Phi(z) + \hat{f}(x_0,y_0;z_0) - \Psi(y_0,z_0)
            +
        P(z_0) - \Psi(y_0,z_0)\\
        &\leq
        \Phi(z_0) - \min_z \Phi(z) + \frac{1}{2}b_0
        +
        \frac{1}{2}b_0= \Delta_0 + b_0. 
\end{aligned}
\label{ineq-thm-main-three}
\end{equation}
%
Note that $\tau_2 = \frac{\tau_1}{48}$ implies $\ly{16}\kappa / \tau_2 \geq \ly{20}\kappa / \tau_1$, and $32\ell/\beta = \frac{32}{\alpha} \cdot \kappa / \tau_2 > \ly{16} \kappa / \mg{\tau_2}$ \sa{since $\alpha\in(0,1)$}. Therefore,  
\begin{equation}
    2 \max\left\{ \frac{\ly{20}\kappa}{\tau_1}, \frac{\ly{16}\kappa}{\tau_2}, \frac{32\ell}{\beta}\right\} = \frac{64}{\alpha} \cdot \kappa / \tau_2. 
    \label{ineq-thm-main-four}
\end{equation}
Therefore, combining \eqref{ineq-thm-main-one}, \eqref{ineq-thm-main-two}, \eqref{ineq-thm-main-three} and \eqref{ineq-thm-main-four}, we conclude that $\mathcal{Q}_{q,T}$ has the following explicit form:
\begin{align}
\label{eq:QT}
\mathcal{Q}_{\bq,T}=
\mg{r_1}
\Big\{
\frac{\Delta_0 + b_0}{T} + \mg{r_2}
+
\frac{\mg{r_3}}{T}
\log\Big(\frac{1}{\bq}\Big)
\Big\},
\end{align}
where the constants $r_1,r_2$ and $r_3$ are defined as 
{\small
\[
    r_1 = 
    \frac{64}{\alpha}\frac{\kappa}{\tau_2},~ 
    r_2 = 
    \sigma_D^2=\sa{16\ell\tau_1}\Big(\tau_1\delta_x^2+\frac{1}{12}\tau_2\delta_y^2\Big),~
    \mg{r_3} =  \max\{4\sigma_C^2,2\sigma_D^2\}=4\sigma_C^2=4\tau_1(240\delta_x^2 + 32\delta_y^2),
\]}%
where the 
equalities follow from the expressions of $\sigma_C^2$ and $\sigma_D^2$ provided in Lemma \ref{lem:c_t_d_t_light_tail}. 
\end{proof}
\section{Proof of \cref{coro-iter-complexity-high-proba}}\label{app-proof-complexity}
 \sa{\camera{Setting $\tau_2 = \camera{\tau_1}/48$ for any $\camera{\tau_1}>0$} implies that $\mathcal{Q}_{\bq,T}$ defined in 
    \eqref{eq:QT} satisfies}
$ \mathcal{Q}_{\bq,T} = \mathcal{O}\left(\frac{\kappa (\Delta_0 + b)}{\sa{\camera{\tau_1}} T} + \sa{\camera{\tau_1}} \ell \kappa { 
{\delta}}^2 + 
\frac{{{\delta}}^2\sa{\kappa}}{T} \log\left(\frac{1}{\bq}\right)\right).
$
Hence, setting $\sa{\camera{\tau_1}} = \min\left(\frac{1}{3\ell}, \frac{48\sqrt{\Delta_0 + \sa{b_0}}}{\sqrt{T\ell  \delta^2}}\right)$ ensures that 
{\small
    \[
       \mathcal{Q}_{q,T} = \mathcal{O} 
                \left(
                    \frac{(\Delta_0 + \sa{b_0})\ell{\kappa}}{T} 
                    + \sqrt{\frac{(\Delta_0+\sa{b_0}) \ell}{T}} \delta 
                    \ys{\kappa} + \frac{\delta^2 \ys{\kappa}}{T} \log\left(\frac{1}{\bq}\right)
                \right).
    \]}%
Finally, to obtain an \sa{$\cO(\varepsilon, \varepsilon/\sqrt{\kappa})$-stationary} point, it suffices to have the above bound smaller than \sa{$\cO(\varepsilon^2)$}, which is guaranteed when the following three conditions are met up to a constant factor: \textit{(i)} $\frac{(\Delta_0 + \sa{b_0})\ell \kappa}{T} \leq \frac{\varepsilon^2}{3}$, \textit{(ii)} $\sqrt{\frac{(\Delta_0+\sa{b_0}) \ell}{T}} \mg{\delta}
\sa{\kappa} \leq \frac{\varepsilon^2}{3}$, and \textit{(iii)} $\frac{\delta^2\sa{\kappa}}{T} \log\left(\frac{1}{\bq}\right) \leq \frac{\varepsilon^2}{3}$. This is directly implied by the value $T_{\varepsilon, \bq}$ given in our corollary statement.

\section{Supplementary Lemmas}\label{app-section-supp-lemmas}
\ly{
In this section, we present a sequence of supplementary lemmas essential for deriving our main result, Theorem~\eqref{thm-high-proba-bound}. 
Some of these results are well-known and are directly referenced. For others, we have improved specific algebraic constants. 
Additionally, some lemmas are extensions of existing bounds provided in expectation.
}
\begin{lemma}\label{xstar-y-smoothness}
    For any \sa{$z\in\R^{d_1}$} and $y_1,y_2 \in \R^{d_2}$, $\|x^*(y_1,z) - x^*(y_2,z)\| \leq \sa{\left(\frac{p+\ell}{p-\ell}\right)\|y_1 - y_2\|}$.
\end{lemma}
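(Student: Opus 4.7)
The plan is to use the first-order optimality conditions of the strongly convex auxiliary problem together with the joint Lipschitz smoothness of $\nabla_x f$. Since $p > \ell$ and $\nabla_x f(\cdot, y)$ is $\ell$-Lipschitz, the map $x \mapsto \hat f(x,y;z) = f(x,y) + \tfrac{p}{2}\|x-z\|^2$ is $(p-\ell)$-strongly convex for every fixed $y,z$, so $x^*(y,z)$ is well defined and characterized by $\nabla_x \hat f(x^*(y,z), y; z) = 0$. Write $x_1 \triangleq x^*(y_1,z)$ and $x_2 \triangleq x^*(y_2,z)$ and let $F(x,y) \triangleq \nabla_x \hat f(x,y;z) = \nabla_x f(x,y) + p(x-z)$.

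First, I would exploit that $F(x_1,y_1) = F(x_2,y_2) = 0$, which rearranges to
\[
F(x_1,y_1) - F(x_2,y_1) \;=\; F(x_2,y_2) - F(x_2,y_1).
\]
Since $F(x,\cdot) - \nabla_x f(x,\cdot)$ does not depend on $y$, the right-hand side equals $\nabla_x f(x_2,y_2) - \nabla_x f(x_2,y_1)$, whose norm is bounded by $\ell\|y_1-y_2\|$ by Assumption~\ref{assump-Lip-gradient}.

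Next, I would apply $(p-\ell)$-strong convexity of $\hat f(\cdot,y_1;z)$ to lower bound the inner product $\langle F(x_1,y_1) - F(x_2,y_1),\, x_1 - x_2 \rangle \geq (p-\ell)\|x_1 - x_2\|^2$. Combining this with Cauchy--Schwarz and the previous identity gives
\[
(p-\ell)\|x_1-x_2\| \;\leq\; \|F(x_1,y_1) - F(x_2,y_1)\| \;\leq\; \ell\|y_1-y_2\|,
\]
so $\|x_1-x_2\| \leq \tfrac{\ell}{p-\ell}\|y_1-y_2\| \leq \tfrac{p+\ell}{p-\ell}\|y_1-y_2\|$, yielding the claim (the looser $(p+\ell)/(p-\ell)$ constant in the lemma statement follows trivially from $\ell \leq p+\ell$).

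There is no substantial obstacle here; the only thing to be careful about is that $f$ itself is not convex in $x$, so the strong convexity must come entirely from the quadratic regularizer $\tfrac{p}{2}\|x-z\|^2$ minus the $\ell$-Lipschitz contribution of $\nabla_x f$, which requires $p > \ell$ (an assumption already in force whenever this lemma is invoked in the main proof).
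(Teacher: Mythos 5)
Your proof is correct, and the argument—first-order optimality of $x^*(\cdot,z)$, strong monotonicity of $\nabla_x \hat f(\cdot,y;z)$ from the $(p-\ell)$-strong convexity, and the cross-Lipschitz bound $\|\nabla_x f(x_2,y_2)-\nabla_x f(x_2,y_1)\|\le\ell\|y_1-y_2\|$—is exactly the standard derivation that underlies the cited results (the paper itself only refers to~\cite[Lemma C.1]{yang2022faster} and~\cite[Lemma B.2(c)]{lin2020near} rather than giving a direct proof). Worth noting: you in fact establish the sharper constant $\tfrac{\ell}{p-\ell}$, since only the cross-Lipschitz constant $\ell$ of $\nabla_x f$ in $y$ enters (the $p(x-z)$ term is $y$-independent and drops out), whereas the stated $\tfrac{p+\ell}{p-\ell}$ is the looser bound one gets by using the full smoothness constant $p+\ell$ of $\hat f(\cdot,y;z)$; your bound trivially implies the lemma as stated.
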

\begin{proof}
    \sa{This result is provided in \cite[Lemma C.1]{yang2022faster}, which immediately follows from \cite[Lemma B.2, part (c)]{lin2020near}.}
\end{proof}
\begin{lemma}\label{psi-smoothness}
    For any $z \in \R^{d_1}$, the map $y \mapsto \Psi(y,z) = \min_x \hat{f}(x,y,z)$ is $\ell\left(1 + \sa{\frac{p+\ell}{p-\ell}}\right)$-smooth.
\end{lemma}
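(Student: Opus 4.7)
The plan is a direct computation exploiting Danskin's theorem together with Lemma \ref{xstar-y-smoothness}. First, I would observe that since $\nabla_x f(\cdot, y)$ is $\ell$-Lipschitz and $p > \ell$ (as assumed when working with $\hat f$ in the paper), the map $x \mapsto \hat f(x, y; z) = f(x, y) + \tfrac{p}{2}\|x-z\|^2$ is $(p-\ell)$-strongly convex for every fixed $y$ and $z$. Consequently, $x^*(y,z) = \arg\min_x \hat f(x,y;z)$ is uniquely defined and $\Psi(y;z) = \hat f(x^*(y,z), y; z)$.

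Next, I would invoke Danskin's theorem (or simply differentiate, using the first-order optimality condition $\nabla_x \hat f(x^*(y,z), y; z) = 0$) to get
\begin{equation*}
\nabla_y \Psi(y;z) \;=\; \nabla_y \hat f(x^*(y,z), y; z) \;=\; \nabla_y f(x^*(y,z), y),
\end{equation*}
where the last equality uses $\nabla_y \hat f = \nabla_y f$ since the proximal term $\tfrac{p}{2}\|x-z\|^2$ does not depend on $y$.

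Then for any $y_1, y_2 \in \mathbb{R}^{d_2}$ and fixed $z \in \mathbb{R}^{d_1}$, applying Assumption \ref{assump-Lip-gradient} gives
\begin{align*}
\|\nabla_y \Psi(y_1;z) - \nabla_y \Psi(y_2;z)\|
&= \|\nabla_y f(x^*(y_1,z), y_1) - \nabla_y f(x^*(y_2,z), y_2)\| \\
&\leq \ell\bigl(\|x^*(y_1,z) - x^*(y_2,z)\| + \|y_1 - y_2\|\bigr).
\end{align*}
Finally, I would apply Lemma \ref{xstar-y-smoothness} to bound $\|x^*(y_1,z) - x^*(y_2,z)\| \leq \tfrac{p+\ell}{p-\ell}\|y_1 - y_2\|$, which yields
\begin{equation*}
\|\nabla_y \Psi(y_1;z) - \nabla_y \Psi(y_2;z)\| \;\leq\; \ell\left(1 + \frac{p+\ell}{p-\ell}\right) \|y_1 - y_2\|,
\end{equation*}
proving the claim. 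The argument is essentially routine; the only subtlety is justifying the envelope identity, which follows cleanly from strong convexity of $\hat f(\cdot, y; z)$, so I do not anticipate any real obstacle here.
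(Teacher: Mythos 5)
Your proposal is correct and follows essentially the same route as the paper's proof: Danskin's theorem to identify $\nabla_y \Psi(y;z) = \nabla_y f(x^*(y,z),y)$, then $\ell$-smoothness of $f$ (Assumption \ref{assump-Lip-gradient}) combined with Lemma \ref{xstar-y-smoothness} to bound the Lipschitz constant. The only cosmetic difference is that you spell out the strong-convexity justification for the envelope identity, whereas the paper simply cites it.
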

\begin{proof}
    \sa{This result immediately follows from \cite[Lemma B.2, part (d)]{lin2020near}. Indeed, for any $y_1,y_2\in\R^{d_2}$, we have}
    {\small
    \begin{equation*}
    \begin{aligned}
        \MoveEqLeft\|\nabla_y \Psi(y_1,z) - \nabla_y\Psi(y_2,z)\|\\
            &=\sa{\|\nabla_y f(x^*(y_1,z),y_1)-\nabla_y f(x^*(y_2,z),y_2)\|}\\
            &\leq \ell \|x^*(y_1,z) - x^*(y_2,z)\| + \ell\|y_1 - y_2\| 
            \leq \left(1 + \sa{\frac{p+\ell}{p-\ell}}\right)\ell\|y_1-y_2\|,
    \end{aligned}
    \end{equation*}
    }
    \sa{where the first equality follows from Danskin's theorem, \ly{the first inequality follos from $\ell$-smoothness of $f$, and the last inequality follows from Lemma~\ref{xstar-y-smoothness}.}}
\end{proof}
\begin{lemma}\label{subopt-xt+1}
    \mg{Consider the iterates $(x_t, y_t, z_t)$ of the \smagda{} algorithm}. For any $t \in \N$,
    \begin{eqnarray*}
        \|x_{t+1}-x^*(y_t,z_t)\|^2 
            &\leq& 2\left(1 + \frac{1}{\tau_1^2(p-\ell)^2} \right)\tau_1^2 \|\nabla_x\hat{f}(x_t,y_t;z_t)\|^2 \\
            && \quad + 4\tau_1^2\langle \nabla_x\hat{f}(x_t,y_t;z_t), \Delta_t^x\rangle + 2\tau_1^2\|\Delta_t^x\|^2.
    \end{eqnarray*}
\end{lemma}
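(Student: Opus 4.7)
The plan is to exploit the \smagda{} update $x_{t+1} = x_t - \tau_1 \hat G_x(x_t,y_t,\xi_{t+1}^x;z_t)$ and decompose the stochastic gradient as $\hat G_x(x_t,y_t,\xi_{t+1}^x;z_t) = \nabla_x \hat f(x_t,y_t;z_t) + \Delta_t^x$. With this decomposition I would write
\[
x_{t+1} - x^*(y_t,z_t) = \bigl(x_t - x^*(y_t,z_t)\bigr) - \tau_1 \hat G_x(x_t,y_t,\xi_{t+1}^x;z_t),
\]
and then apply the elementary inequality $\|a+b\|^2 \leq 2\|a\|^2 + 2\|b\|^2$ to obtain
\[
\|x_{t+1} - x^*(y_t,z_t)\|^2 \leq 2\|x_t - x^*(y_t,z_t)\|^2 + 2\tau_1^2 \|\hat G_x(x_t,y_t,\xi_{t+1}^x;z_t)\|^2.
\]

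Next, I would control the first term using the strong convexity of $\hat f(\cdot,y_t;z_t)$. Since $p>\ell$, Assumption \ref{assump-Lip-gradient} implies $\hat f(\cdot,y_t;z_t)$ is $(p-\ell)$-strongly convex, and by definition of $x^*(y_t,z_t)$ we have $\nabla_x \hat f(x^*(y_t,z_t),y_t;z_t) = 0$. Combining strong convexity with this optimality condition yields
\[
\|x_t - x^*(y_t,z_t)\| \leq \frac{1}{p-\ell}\bigl\|\nabla_x \hat f(x_t,y_t;z_t)\bigr\|,
\]
so that $\|x_t - x^*(y_t,z_t)\|^2 \leq \frac{1}{(p-\ell)^2}\|\nabla_x \hat f(x_t,y_t;z_t)\|^2$.

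For the second term, I would simply expand the square using the definition of $\Delta_t^x$:
\[
\|\hat G_x(x_t,y_t,\xi_{t+1}^x;z_t)\|^2 = \|\nabla_x \hat f(x_t,y_t;z_t)\|^2 + 2\langle \nabla_x \hat f(x_t,y_t;z_t),\Delta_t^x\rangle + \|\Delta_t^x\|^2.
\]

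Finally, substituting both bounds back and collecting the coefficient of $\|\nabla_x\hat f(x_t,y_t;z_t)\|^2$ as $2\tau_1^2\bigl(1 + \tfrac{1}{\tau_1^2(p-\ell)^2}\bigr)$ yields exactly the claimed inequality. The proof is essentially mechanical, with no real obstacle; the only subtlety is remembering to split off the noise via a convexity-of-norm-squared step \emph{before} using strong convexity, since the strong-convexity bound is deterministic and applies to the deterministic part of the step.
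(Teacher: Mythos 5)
Your argument is correct and is essentially the paper's own proof: the paper likewise bounds $\|x_{t+1}-x^*(y_t,z_t)\|^2 \le 2\|x_t-x^*(y_t,z_t)\|^2 + 2\|x_{t+1}-x_t\|^2$, uses $(p-\ell)$-strong convexity of $\hat f(\cdot,y_t;z_t)$ for the first term, and expands $\|x_{t+1}-x_t\|^2 = \tau_1^2\|\hat G_x\|^2$ via $\hat G_x = \nabla_x\hat f + \Delta_t^x$ for the second. The only cosmetic difference is that you write $b = -\tau_1\hat G_x$ directly while the paper writes $b = x_{t+1}-x_t$, which is the same quantity.
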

\begin{proof} 
    For any $t \in \N$, \sa{using the fact that $\hat{f}$ is strongly convex in $x$ with modulus $(p-\ell)$ together with $x^*(y_t,z_t)=\argmin_x \hat f(x,y_t;z_t)$, and $x_{t+1}=x_t-\tau_1 \hat{G}_x(x_t,y_t,{\xi_{t+1}^x};z_t)$, we get}
    \begin{align*}
        \|x_{t+1} - x^*(y_t,z_t)\|^2
        &\leq
        2\|x_t - x^*(y_t,z_t)\|^2 + 2\|x_{t+1} - x_t\|^2 \\
        &\leq
        \frac{2}{(p-\ell)^2}\|\nabla_x\hat{f}(x_t,y_t;z_t)\|^2 + 2\tau_1^2\|\hat{G}_x(x_t,y_t,{\xi_{t+1}^x};z_t)\|^2.
    \end{align*}
    \sa{Using the identity $\hat{G}_x(x_t,y_t,{\xi_{t+1}^x};z_t)=\Delta_t^x+\nabla_x\hat f (x_t,y_t;z_t)$ within the above inequality yields the desired result.}
\end{proof}
\begin{lemma}\label{xstar-lipschitz}
For any $y \in \R^{d_2}$, $z_1,z_2 \in \R^{d_1}$, we have:
\[
    \|x^*(y,z_1) - x^*(y,z_2)\|
    \leq 
    \frac{p}{p-\ell}\|z_1 - z_2\|.
\]
\end{lemma}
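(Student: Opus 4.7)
The plan is to combine the first-order optimality conditions for $x^*(y,z_1)$ and $x^*(y,z_2)$ with the strong convexity of $\hat{f}(\cdot,y;z)$ (which holds with modulus $p-\ell$ whenever $p>\ell$, thanks to the $\ell$-smoothness of $f$ from Assumption~\ref{assump-Lip-gradient}). Writing $x_i \triangleq x^*(y,z_i)$ for $i=1,2$, the optimality conditions read
\[
\nabla_x f(x_i,y) + p(x_i - z_i) = 0,\qquad i=1,2.
\]
Subtracting these two identities yields $p(z_1-z_2) = p(x_1-x_2) + \nabla_x f(x_1,y) - \nabla_x f(x_2,y)$.

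Next I would take the inner product of this identity with $x_1-x_2$ to obtain
\[
p\,\langle z_1-z_2,\, x_1-x_2\rangle \;=\; p\|x_1-x_2\|^2 + \langle \nabla_x f(x_1,y) - \nabla_x f(x_2,y),\, x_1-x_2\rangle.
\]
Since $\nabla_x f(\cdot,y)$ is $\ell$-Lipschitz by Assumption~\ref{assump-Lip-gradient}, the Cauchy--Schwarz inequality yields $\langle \nabla_x f(x_1,y) - \nabla_x f(x_2,y),\, x_1-x_2\rangle \geq -\ell\|x_1-x_2\|^2$, so that the right-hand side is lower bounded by $(p-\ell)\|x_1-x_2\|^2$. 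Another application of Cauchy--Schwarz on the left-hand side gives $p\|z_1-z_2\|\|x_1-x_2\| \geq (p-\ell)\|x_1-x_2\|^2$, and dividing by $\|x_1-x_2\|$ (the case $x_1=x_2$ being trivial) delivers the claim.

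This proof is essentially routine once one has the first-order optimality system; the only subtle step is invoking smoothness (rather than, say, monotonicity of $\nabla_x f(\cdot,y)$, which need not hold because $f$ can be nonconvex in $x$) to obtain the lower bound on $\langle \nabla_x f(x_1,y)-\nabla_x f(x_2,y),\,x_1-x_2\rangle$. I do not anticipate any real obstacle here.
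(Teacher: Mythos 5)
Your proof is correct, and it takes a genuinely different (though classical) route from the paper's. You work at the level of first-order optimality conditions: subtract the two stationarity equations $\nabla_x f(x_i,y) + p(x_i - z_i) = 0$, take the inner product with $x_1 - x_2$, and use the Lipschitz bound $\langle \nabla_x f(x_1,y)-\nabla_x f(x_2,y),\,x_1-x_2\rangle \geq -\ell\|x_1-x_2\|^2$ together with Cauchy--Schwarz. This is essentially the ``strong monotonicity of $\nabla_x \hat f(\cdot,y;z)$'' characterization of the $(p-\ell)$-strong convexity of $\hat f$ in $x$. The paper instead argues purely via function values: it writes down the strong-convexity inequalities $\hat f(x^*(y,z_j),y;z_i) - \hat f(x^*(y,z_i),y;z_i) \geq \frac{p-\ell}{2}\|x^*(y,z_1)-x^*(y,z_2)\|^2$ for the two index orderings, expands the quadratic penalty terms explicitly, sums, and applies Cauchy--Schwarz to reach the same inequality $(p-\ell)\|x^*(y,z_1)-x^*(y,z_2)\|^2 \leq p\|x^*(y,z_1)-x^*(y,z_2)\|\|z_1-z_2\|$. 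Your gradient-based argument is shorter and arguably more transparent (it makes explicit that the only thing needed beyond strong convexity is the $\ell$-Lipschitzness of $\nabla_x f$, and it never needs to expand $\|\cdot\|^2$ terms); the paper's function-value argument is the one that generalizes more directly to nonsmooth settings where first-order optimality may not be a gradient equation. Your remark about not relying on monotonicity of $\nabla_x f(\cdot,y)$ is exactly the right point to flag.
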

\begin{proof}
    \mg{This result follows from \cite[Lemma B.2]{zhang2020single}}. \sa{Indeed, since $\hat{f}$ is strongly convex in $x$ with modulus $p-\ell$, we get}
    \begin{equation}\label{lemma-5.4-pl-convexity1}
        \hat{f}(x^*(y,z_1),y;z_2) - \hat{f}(x^*(y,z_2),y;z_2) 
        \geq 
        \frac{p-\ell}{2}\|x^*(y,z_1) - x^*(y,z_2)\|^2.
    \end{equation}
    Then swapping $z_1,z_2$ leads to
    \begin{equation}\label{lemma-5.4-pl-convexity2}
        \hat{f}(x^*(y,z_2),y;z_1) - \hat{f}(x^*(y,z_1),y;z_1) 
        \geq 
        \frac{p-\ell}{2}\|x^*(y,z_1) - x^*(y,z_2)\|^2.
    \end{equation}
    \sa{Furthermore, from the definition $\hat f$, it follows that}
    \begin{align*}
        \hat{f}(x^*(y,z_2),y;z_1) - \hat{f}(x^*(y,z_2), y;z_2) 
            &= \frac{p}2\left(\|x^*(y,z_2) - z_1\|^2 - \|x^*(y,z_2) - z_2\|^2 \right) \\
            &= \frac{p}2\left(\|z_1\|^2 - \|z_2\|^2 + 2\langle x^*(y,z_2), z_2-z_1\rangle \right);
    \end{align*}
    similarly, swapping $z_1,z_2$ \sa{in the above inequality} leads to
    \begin{align*}
        \hat{f}(x^*(y,z_1),y;z_2) - \hat{f}(x^*(y,z_1), y;z_1) 
            &= \frac{p}2\left(\|x^*(y,z_1) - z_2\|^2 - \|x^*(y,z_1) - z_1\|^2 \right) \\
            &= \frac{p}2\left(\|z_2\|^2 - \|z_1\|^2 - 2\langle x^*(y,z_1), z_2-z_1\rangle \right).
    \end{align*}
    Thus, summing the above two identities and applying Cauchy-Schwartz gives us
    \[
    \begin{aligned}
        \MoveEqLeft\hat{f}(x^*(y,z_2),y;z_1) - \hat{f}(x^*(y,z_2), y;z_2) + \hat{f}(x^*(y,z_1),y;z_2) - \hat{f}(x^*(y,z_1),y;z_1)\\
        &\leq p\|x^*(y,z_1) - x^*(y,z_2)\|\|z_1 - z_2\|.
    \end{aligned}
    \]
    \sa{We can lower bound the left hand side of the above inequality using \eqref{lemma-5.4-pl-convexity1} and \eqref{lemma-5.4-pl-convexity2}, which leads to}
    \begin{equation}\label{ineq-xstar-helper}
        (p-\ell)\|x^*(y,z_1) - x^*(y,z_2)\|^2
        \leq 
        p\|x^*(y,z_1) - x^*(y,z_2)\|\|z_1-z_2\|.        
    \end{equation}
    \sa{Rearranging this inequality yields the desired result.} 
\end{proof}
\begin{lemma}\label{phi-strong-convexity}
    For any $x \in \R^{d_1}$, $x \mapsto \Phi(x;z)$ is $(p-\ell)$-strongly convex.
\end{lemma}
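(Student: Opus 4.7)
My plan is to first establish strong convexity of the auxiliary function $\hat{f}(\cdot,y;z)$ in its first argument for every fixed $(y,z)$, and then lift this to $\Phi(\cdot;z)$ by using the fact that a pointwise supremum of strongly convex functions (with a uniform modulus) is itself strongly convex.

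For the first step, I note that Assumption~\ref{assump-Lip-gradient} implies $\nabla_x f(\cdot,y)$ is $\ell$-Lipschitz, which by a standard characterization means $f(\cdot,y)$ is $\ell$-weakly convex: the function $x\mapsto f(x,y)+\frac{\ell}{2}\|x\|^2$ is convex. Adding $\frac{p}{2}\|x-z\|^2$ and rearranging,
\[
\hat{f}(x,y;z) \;=\; \Bigl[f(x,y)+\tfrac{\ell}{2}\|x\|^2\Bigr] \;+\; \tfrac{p}{2}\|x-z\|^2 \;-\; \tfrac{\ell}{2}\|x\|^2,
\]
so $\hat{f}(\cdot,y;z)$ is the sum of a convex function and a quadratic whose Hessian equals $(p-\ell)I$. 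Hence $\hat{f}(\cdot,y;z)$ is $(p-\ell)$-strongly convex, i.e. for all $x_1,x_2$ and $t\in[0,1]$,
\[
\hat{f}(tx_1+(1-t)x_2,y;z) \leq t\hat{f}(x_1,y;z)+(1-t)\hat{f}(x_2,y;z) - \tfrac{(p-\ell)}{2}t(1-t)\|x_1-x_2\|^2.
\]

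For the second step, taking the supremum over $y\in\R^{d_2}$ of both sides of the above inequality preserves the quadratic deficit since it does not involve $y$: the right-hand side is an upper bound for every $y$, so after taking $\sup_y$ on the left we obtain
\[
\Phi(tx_1+(1-t)x_2;z) \leq t\Phi(x_1;z)+(1-t)\Phi(x_2;z) - \tfrac{(p-\ell)}{2}t(1-t)\|x_1-x_2\|^2,
\]
which is precisely $(p-\ell)$-strong convexity of $x\mapsto\Phi(x;z)$. Under Assumption~\ref{assump:primal_function} combined with the PL condition in $y$ (Assumption~\ref{assump-PL-cond}), $\Phi(\cdot;z)$ is finite-valued, so there is no issue with $+\infty$ values, and the inequality is meaningful for all $x_1,x_2\in\R^{d_1}$.

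There is no real obstacle here: the only subtlety is ensuring that the strong-convexity modulus $(p-\ell)$ is uniform in $y$ so that it survives the pointwise supremum, which is immediate because $\ell$ and $p$ do not depend on $y$. The result also tacitly requires $p>\ell$ (already assumed throughout the analysis after Lemma~\ref{dual-descent}) for the modulus to be positive.
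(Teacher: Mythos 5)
Your proof is correct and follows essentially the same route as the paper: establish that $\hat f(\cdot,y;z)$ is $(p-\ell)$-strongly convex in $x$ for every $y$, and then note that a pointwise supremum over $y$ of functions that are strongly convex in $x$ with a uniform modulus is itself $(p-\ell)$-strongly convex. The paper phrases the second step via the equivalent characterization that $\Phi(\cdot;z)-\frac{p-\ell}{2}\|\cdot\|^2$ is a pointwise supremum of convex functions and hence convex, while you carry the strong-convexity inequality through the supremum directly; you also supply a bit more detail (the $\ell$-weak-convexity decomposition) for the first step, which the paper merely asserts.
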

\begin{proof}
For any $y \in \R^{d_2}, z\in \R^{d_1}$, $x \mapsto \hat{f}(x,y;z)$ is strongly convex with modulus $p-\ell$, i.e., $x \mapsto \hat{f}(x,y;z) - \frac{p-\ell}{2}\|x\|^2$ is convex. 
Then, $x \mapsto \sup_{y \in \R^{d_2}} \hat{f}(x,y;z) - \frac{p-\ell}2\|x\|^2$ is convex \mg{as} it is the pointwise supremum of convex functions.
Therefore, $x \mapsto \Phi(x;z) - \frac{p-\ell}2\|x\|^2$ is convex, and this implies $x \mapsto \Phi(x,z)$ is strongly convex with modulus $p-\ell$.
\end{proof}
\begin{lemma}\label{xstar-z-lipschitz}
    For all $z_1,z_2 \in \R^{d_1}$, we have
    \[
    \|x^*(z_1) - x^*(z_2)\|
    \leq 
    \frac{p}{p-\ell}\|z_1-z_2\|.
    \]
\end{lemma}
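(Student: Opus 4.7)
The plan is to mirror the proof of \cref{xstar-lipschitz} essentially verbatim, replacing the auxiliary function $\hat f(\cdot,y;z)$ by the primal function $\Phi(\cdot;z)$. The two key ingredients that make this transfer possible are already established in the paper: first, $\Phi(\cdot;z)$ is $(p-\ell)$-strongly convex for each fixed $z$ by \cref{phi-strong-convexity}; second, $\Phi(x;z)$ depends on $z$ only through the additive quadratic term $\frac{p}{2}\|x-z\|^2$, since $\Phi(x;z)=\max_y f(x,y)+\frac{p}{2}\|x-z\|^2$ (the $\max_y$ does not see $z$). This second fact yields the exact same $z$-dependence identity that drove the proof of \cref{xstar-lipschitz}.

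Concretely, I would proceed as follows. First, apply strong convexity of $\Phi(\cdot;z_2)$ at its minimizer $x^*(z_2)$ to obtain
\[
\Phi(x^*(z_1);z_2)-\Phi(x^*(z_2);z_2)\geq\tfrac{p-\ell}{2}\|x^*(z_1)-x^*(z_2)\|^2,
\]
and symmetrically with $z_1,z_2$ swapped. Second, compute the cross-differences using the identity
\[
\Phi(x;z_1)-\Phi(x;z_2)=\tfrac{p}{2}\bigl(\|x-z_1\|^2-\|x-z_2\|^2\bigr)=\tfrac{p}{2}\bigl(\|z_1\|^2-\|z_2\|^2+2\langle x,z_2-z_1\rangle\bigr),
\]
evaluated at $x=x^*(z_2)$ and $x=x^*(z_1)$. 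Summing these two cross-differences, the $\|z_1\|^2-\|z_2\|^2$ contributions cancel, leaving
\[
\Phi(x^*(z_2);z_1)-\Phi(x^*(z_2);z_2)+\Phi(x^*(z_1);z_2)-\Phi(x^*(z_1);z_1)=p\langle x^*(z_2)-x^*(z_1),\,z_2-z_1\rangle,
\]
which by Cauchy--Schwarz is at most $p\,\|x^*(z_1)-x^*(z_2)\|\,\|z_1-z_2\|$.

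Third, combine the two strong-convexity lower bounds with this upper bound (exactly as in the step leading to inequality \eqref{ineq-xstar-helper} in the proof of \cref{xstar-lipschitz}) to get
\[
(p-\ell)\|x^*(z_1)-x^*(z_2)\|^2\leq p\,\|x^*(z_1)-x^*(z_2)\|\,\|z_1-z_2\|,
\]
and divide through by $\|x^*(z_1)-x^*(z_2)\|$ (the trivial case $x^*(z_1)=x^*(z_2)$ makes the claim vacuous). Since $p>\ell$ in our regime, $p-\ell>0$ and the division is well-defined.

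There is no real obstacle here: the only thing to double-check is that $\Phi(x;z_1)-\Phi(x;z_2)$ really reduces to the pure quadratic $z$-dependence, which requires noting that $\max_y f(x,y)$ does not depend on $z$; this is immediate from the definition of $\Phi$ and $\hat f$. The rest is the same algebra as \cref{xstar-lipschitz}, only with the $\max_y$ being performed first rather than appearing implicitly through $x^*(y,z)$.
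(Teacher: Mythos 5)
Your proof is correct and is exactly the argument the paper intends: the paper's proof simply states "using the result of Lemma \ref{phi-strong-convexity}, one can show this result following exactly the same arguments in the proof of Lemma \ref{xstar-lipschitz}," and you have spelled out those arguments in full, correctly noting that $\Phi(x;z)=\max_y f(x,y)+\frac{p}{2}\|x-z\|^2$ so the $z$-dependence is the same pure quadratic as for $\hat f$.
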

\begin{proof}
\sa{Using the result of \cref{phi-strong-convexity}, one can show this result following exactly the same arguments in the proof of \cref{xstar-lipschitz}.} 
\end{proof}
\begin{lemma}\label{subopt-proximal-after-dual}
For any $y \in \R^{d_2}$, $z \in \R^{d_1}$, it holds that
\[
    \|x^*(z) - x^*(y^+(z),z)\|^2
    \leq 
    \frac{1}{(p-\ell)\mu} 
    \left( 
    1 + \sa{\tau_2\ell \frac{2p}{p-\ell}}    
    \right)^2
    \|\sa{\nabla_y{f}(x^*(y,z),y)}\|^2.
\]
\end{lemma}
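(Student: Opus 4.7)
The plan is to combine three ingredients: (i) the $(p-\ell)$-strong convexity of $\hat f(\cdot,y;z)$ to convert the distance $\|x^*(z)-x^*(y^+(z),z)\|^2$ into a function-value suboptimality of $\Psi(\cdot;z)$, (ii) a Polyak--\L ojasiewicz property for the dual function $\Psi(\cdot;z)$ with the same constant $\mu$, and (iii) the smoothness of $\Psi(\cdot;z)$ from Lemma~\ref{psi-smoothness} to relate $\|\nabla_y\Psi(y^+(z);z)\|$ to $\|\nabla_y\Psi(y;z)\|=\|\nabla_y f(x^*(y,z),y)\|$.

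First I would exploit strong convexity: since $\hat f(\cdot,y^+(z);z)$ is $(p-\ell)$-strongly convex and minimized at $x^*(y^+(z),z)$ with optimal value $\Psi(y^+(z);z)$, taking the test point $x^*(z)$ yields
\[
\tfrac{p-\ell}{2}\|x^*(z)-x^*(y^+(z),z)\|^2 \le \hat f(x^*(z),y^+(z);z) - \Psi(y^+(z);z) \le P(z) - \Psi(y^+(z);z),
\]
where the last inequality uses $\hat f(x^*(z),y^+(z);z)\le \max_y \hat f(x^*(z),y;z)=\Phi(x^*(z);z)=P(z)$ by the definition of $x^*(z)$.

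Next, the key step is to establish that $\Psi(\cdot;z)$ satisfies PL with modulus $\mu$. Applying Assumption~\ref{assump-PL-cond} at $x=x^*(y,z)$ and using $\nabla_y\Psi(y;z)=\nabla_y f(x^*(y,z),y)$ (Danskin), together with the identity $f(x^*(y,z),y)=\Psi(y;z)-\tfrac{p}{2}\|x^*(y,z)-z\|^2$ and analogously for $\max_{y'}f(x^*(y,z),y')$, yields
\[
\|\nabla_y\Psi(y;z)\|^2 \ge 2\mu\bigl[\Phi(x^*(y,z);z) - \Psi(y;z)\bigr] \ge 2\mu\bigl[P(z)-\Psi(y;z)\bigr],
\]
where the last inequality uses $\Phi(x^*(y,z);z)\ge\min_x\Phi(x;z)=P(z)$ together with the strong duality $P(z)=\max_y\Psi(y;z)$ (valid since $\hat f$ is strongly convex in $x$ and PL in $y$, as invoked in the proof of Lemma~\ref{proximal-descent}). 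Applying this PL property at the point $y^+(z)$ gives $P(z)-\Psi(y^+(z);z)\le\tfrac{1}{2\mu}\|\nabla_y\Psi(y^+(z);z)\|^2$.

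Finally, $L_\Psi$-smoothness of $\Psi(\cdot;z)$ (Lemma~\ref{psi-smoothness}) and the definition $y^+(z)-y=\tau_2\nabla_y f(x^*(y,z),y)$ yield
\[
\|\nabla_y\Psi(y^+(z);z)\| \le \|\nabla_y\Psi(y;z)\| + L_\Psi\tau_2\|\nabla_y\Psi(y;z)\| = (1+L_\Psi\tau_2)\,\|\nabla_y f(x^*(y,z),y)\|,
\]
and a direct simplification gives $L_\Psi=\ell\bigl(1+\tfrac{p+\ell}{p-\ell}\bigr)=\tfrac{2p\ell}{p-\ell}$. Chaining the three bounds produces
\[
\tfrac{p-\ell}{2}\|x^*(z)-x^*(y^+(z),z)\|^2 \le \tfrac{(1+\tau_2\ell\tfrac{2p}{p-\ell})^2}{2\mu}\|\nabla_y f(x^*(y,z),y)\|^2,
\]
which is exactly the stated inequality. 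The main subtlety is the dual PL property; once this is established, the rest is a clean combination of textbook inequalities.
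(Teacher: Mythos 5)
Your proof is correct and follows the same chain of ideas as the cited Lemma~C.2 of Yang et al.\ (which the paper defers to): $(p-\ell)$-strong convexity of $\hat f(\cdot,y^+(z);z)$ converts the squared distance into $P(z)-\Psi(y^+(z);z)$, a dual PL inequality for $\Psi(\cdot;z)$ (inherited from Assumption~\ref{assump-PL-cond} via Danskin at $x^*(y,z)$ and the inequality $\Phi(x^*(y,z);z)\ge P(z)$) converts that suboptimality into $\tfrac{1}{2\mu}\|\nabla_y\Psi(y^+(z);z)\|^2$, and $L_\Psi$-smoothness of $\Psi(\cdot;z)$ with the identity $L_\Psi=\tfrac{2p\ell}{p-\ell}$ relates this back to $\|\nabla_y f(x^*(y,z),y)\|$. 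Each step is justified exactly as you stated, so this is a faithful reconstruction of the referenced argument rather than a genuinely different route.
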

\begin{proof} 
    The proof is the same as \cite[Lemma C.2]{yang2022faster}.
\end{proof}

\section{Further Details about Figure \ref{fig-quantile-comparison}}\label{appendix-num-details}
In this section, we provide further details about how the empirical and theoretical quantiles are estimated in Figure \ref{fig-quantile-comparison} and are compared.

\paragraph{Generation of the theoretical quantiles $\mathcal{Q}_{q,T}$.} For any given \( q \in (0,1) \), estimating an upper bound on \(\mathcal{Q}_{q,T}\) requires estimating an upper bound on the quantity \(\Delta_0 + b_0\) based on Theorem \ref{thm-high-proba-bound}. Other constants such as \( r_1, r_2, r_3 \), \(\ell\), and \(\mu\) are explicitly known in the setting of this experiment based on the NCPL game where \( T = 10{,}000 \) is fixed.

First, we set the initial point \((x_0, y_0)\) randomly, where each component of \( x_0 \) and \( y_0 \) is sampled uniformly from the interval \([-20, 20]\), and we set \( z_0 = x_0 \). We then estimate an upper bound on the quantity \(\Delta_0 + b_0\) numerically based on a grid search, resulting in \(\Delta_0 + b_0 = 12\). Second, we generate a linear mesh \( I_m \) with a grid size \( m = 0.0002 \) over the interval \([0,1]\). For \( q \in I_m \), we calculate \(\mathcal{Q}_{q,T}\) based on Theorem \ref{thm-high-proba-bound}. Third, we generate a sequence of quantiles \(\mathcal{Q}_{I_m,T}\). These quantiles are used to create a CDF via linear interpolation using the \verb+scipy.interpolate+ package's \verb+interp1d+ function in Python. Note that this quantile sequence generates a CDF over the values \(\mathcal{Q}_{I_m,T}\).

\paragraph{Generation of the empirical quantiles of the random variable \(X_T\).} We generate 1{,}000 samples \(\{X_T^{(i)}\}_{i=1}^{1000}\) from the sample paths corresponding to the NCPL game with \( T = 10{,}000 \), where \( X_T = \frac{1}{T} \sum_{t=1}^{T} \|\nabla_x f(x_t, y_t)\|^2 + \kappa \|\nabla_y f(x_t, y_t)\|^2 \) represents the path averages of the gradients. Quantiles for this sequence were generated over \( I_m \) using NumPy's quantile generator in Python, ensuring alignment with the mesh over which the theoretical quantiles were generated. Evidently, our theoretical quantiles dominate the empirical quantiles pointwise, demonstrating that in the challenging NCPL regime, our theory provides empirically verifiable guarantees on the tail behavior of the random variable \(X_T\).

\paragraph{Comparison of quantiles.} We plotted the CDF corresponding to the theoretical quantiles \(\mathcal{Q}_{q,T}\) over the values of the empirical quantiles using a common mesh grid over the range of the empirical averages of the sample paths. In other words, we scaled the quantiles \(\mathcal{Q}_{q,T}\) with an affine transformation so that their range matches the range of the empirical quantiles. This affine scaling preserves the shape of the distribution corresponding to the theoretical quantiles and allows for better visualization.

\newpage

\section*{NeurIPS Paper Checklist}

\begin{enumerate}

\item {\bf Claims}
    \item[] Question: Do the main claims made in the abstract and introduction accurately reflect the paper's contributions and scope?
    \item[] Answer: \answerYes{} 
     The abstract and the introduction reflect the paper's contributions and scope accurately. In fact, we provide adequate references to the results from our paper for the claims in the introduction so that the paper's contributions and scope are easier to understand. 

\item {\bf Limitations}
    \item[] Question: Does the paper discuss the limitations of the work performed by the authors?
    \item[] Answer: \answerYes{}
    Although we provide the first high-probability bounds for nonconvex/PL minimax problems to the best of our knowledge, there may still be some room to improve the condition number $\kappa$ dependency based on the existing lower complexity bounds~\cite{li2021complexity,zhang2021complexity} in expectation, unless the lower complexity bounds are loose. Also, we acknowledged that for the distributionally robust optimization experiment our assumptions do not all hold (which is a limitation); but that our results are still predictive of the practical performance.  

\item {\bf Theory Assumptions and Proofs}
    \item[] Question: For each theoretical result, does the paper provide the full set of assumptions and a complete (and correct) proof?
    \item[] Answer: \answerYes
    All assumptions are clearly stated or referenced in the statement of any theorem, we provide a sketch of the proof of our main theorem. 

    \item {\bf Experimental Result Reproducibility}
    \item[] Question: Does the paper fully disclose all the information needed to reproduce the main experimental results of the paper to the extent that it affects the main claims and/or conclusions of the paper (regardless of whether the code and data are provided or not)?
    \item[] Answer: \answerYes{}
    We uploaded our code as a supplementary material, and explained in detail how our experiments are performed. All the datasets we use are well-known benchmark datasets and we provided the adequate references to them. 

\item {\bf Open access to data and code}
    \item[] Question: Does the paper provide open access to the data and code, with sufficient instructions to faithfully reproduce the main experimental results, as described in supplemental material?
    \item[] Answer: \answerYes{} We explain the experimental setup in detail and provide the code, the datasets are well-known benchmark sets that are publicly available and we provided the adequate references.

\item {\bf Experimental Setting/Details}
    \item[] Question: Does the paper specify all the training and test details (e.g., data splits, hyperparameters, how they were chosen, type of optimizer, etc.) necessary to understand the results?
    \item[] Answer: \answerYes{} Our numerical experiments section has all the details. 

\item {\bf Experiment Statistical Significance}
    \item[] Question: Does the paper report error bars suitably and correctly defined or other appropriate information about the statistical significance of the experiments?
    \item[] Answer: \answerYes{} In the experiments, we compare the histogram of the solutions found by multiple algorithms. This has more resolution than standard approaches that has the average performance (as an average over the algorithm runs) and the standard deviation over the runs; because one can visualize the behavior (histogram) of all runs.

\item {\bf Experiments Compute Resources}
    \item[] Question: For each experiment, does the paper provide sufficient information on the computer resources (type of compute workers, memory, time of execution) needed to reproduce the experiments?
    \item[] Answer: \answerYes{}
    We explained the details of the operating system/computing resources.
    
\item {\bf Code Of Ethics}
    \item[] Question: Does the research conducted in the paper conform, in every respect, with the NeurIPS Code of Ethics \url{https://neurips.cc/public/EthicsGuidelines}?
    \item[] Answer: \answerYes{} We preserve anonymity and we absolutely confirm with the code of Ethics.  

\item {\bf Broader Impacts}
    \item[] Question: Does the paper discuss both potential positive societal impacts and negative societal impacts of the work performed?
    \item[] Answer: \answerNA{} 
    Our paper is mainly a theoretical paper with convergence guarantees for stochastic mini-max algorithms, so we are not aware of any direct impacts to the society.

\item {\bf Safeguards}
    \item[] Question: Does the paper describe safeguards that have been put in place for responsible release of data or models that have a high risk for misuse (e.g., pretrained language models, image generators, or scraped datasets)?
    \item[] Answer: \answerNA{} 
    Our paper has theoretical nature an we are not aware of any potential misuse risk for our results.

\item {\bf Licenses for existing assets}
    \item[] Question: Are the creators or original owners of assets (e.g., code, data, models), used in the paper, properly credited and are the license and terms of use explicitly mentioned and properly respected?
    \item[] Answer: \answerNA{} 
    We use public datasets, and implemented the algorithms on our own in Python; no licensed material is used. We use public well-known benchmark datasets which are properly referenced.

\item {\bf New Assets}
    \item[] Question: Are new assets introduced in the paper well documented and is the documentation provided alongside the assets?
    \item[] Answer: \answerNA{} 
    We do not release new assets.

\item {\bf Crowdsourcing and Research with Human Subjects}
    \item[] Question: For crowdsourcing experiments and research with human subjects, does the paper include the full text of instructions given to participants and screenshots, if applicable, as well as details about compensation (if any)? 
    \item[] Answer: \answerNA{} 
    The paper does not involve crowdsourcing nor research with human subjects.

\item {\bf Institutional Review Board (IRB) Approvals or Equivalent for Research with Human Subjects}
    \item[] Question: Does the paper describe potential risks incurred by study participants, whether such risks were disclosed to the subjects, and whether Institutional Review Board (IRB) approvals (or an equivalent approval/review based on the requirements of your country or institution) were obtained?
    \item[] Answer: \answerNA{} 
    No human subjects or crowdsourcing were involved.

\end{enumerate}
\vspace{0.5in}

\end{document}